\definecolor{uuuuuu}{rgb}{0.26666666666666666,0.26666666666666666,0.26666666666666666}
\definecolor{xdxdff}{rgb}{0.49019607843137253,0.49019607843137253,1.}
\definecolor{ffqqqq}{rgb}{1.,0.,0.}
\definecolor{uuuuuu}{rgb}{0.26666666666666666,0.26666666666666666,0.26666666666666666}
\definecolor{qqwuqq}{rgb}{0.,0.39215686274509803,0.}
\definecolor{zzttqq}{rgb}{0.6,0.2,0.}
\definecolor{xdxdff}{rgb}{0.49019607843137253,0.49019607843137253,1.}
\definecolor{qqqqff}{rgb}{0.,0.,1.}
\definecolor{cqcqcq}{rgb}{0.7529411764705882,0.7529411764705882,0.7529411764705882}
\theoremstyle{plain}
\newtheorem{theo}[subsubsection]{Theorem}
\newtheorem{open}[subsection]{Open}
\newtheorem{lemma}[subsection]{Lemma}
\newtheorem{lemma1}[subsubsection]{Lemma}
\newtheorem{prop}[subsection]{Proposition}
\newtheorem{propo}[subsubsection]{Proposition}
\theoremstyle{definition}
\newtheorem{deli}[subsection]{Delineation}
\newtheorem{defi}[subsubsection]{Definition}
\newtheorem{cor}[subsection]{Corollary}
\newtheorem{remark}[subsection]{Remark}
\newtheorem{remark1}[subsubsection]{Remark}
\newtheorem{note}[subsection]{Note}
\newtheorem{note1}[subsubsection]{Note}
\newcommand{\uu}{\cup}% union
\newcommand{\ii}{\cap}% intersection
\newcommand{\UU}{\bigcup}% big union
\newcommand{\ci}{\subseteq}% contained in with equality
\newcommand{\sci}{\subset}% strictly contained in
\newcommand{\es}{\emptyset}% the empty set
\newcommand{\set}[1]{\{#1\}}% set
\newcommand{\ga}{\alpha}
\newcommand{\gb}{\beta}
\renewcommand{\gg}{\gamma}% old use >>
\newcommand{\gk}{\kappa}
\newcommand{\gm}{\mu}
\newcommand{\gn}{\nu}
\newcommand{\go}{\omega}
\newcommand{\gt}{\tau}
\newcommand{\gG}{\Gamma}
\newcommand{\tit}{\textit}% text italic
\newcommand{\C}[1]{\mathcal{#1}}% Euler Script - only caps, use as \C{A}
\newcommand{\D}[1]{\mathbb{#1}}% Doubled - blackboard bold - only caps, uas as \D{A}
\newcommand{\te}{\text}% same as \mathrm command.
\newcommand{\ol}{\overline}
\begin{document}
To appear, in the Journal `Mathematics'
\title{Quantization for a condensation system}
\author{ Shivam Dubey}
\address{Department of Applied Sciences\\
	Indian Institute of Information Technology Allahabad\\
Prayagraj, 211015, UP, India.}

\email{rss2022509@iiita.ac.in}
\author{ Mrinal Kanti Roychowdhury}
\address{School of Mathematical and Statistical Sciences\\
	University of Texas Rio Grande Valley\\
	1201 West University Drive\\
	Edinburg, TX 78539-2999, USA.}
\email{mrinal.roychowdhury@utrgv.edu}
\author{Saurabh Verma}
\address{Department of Applied Sciences\\
	Indian Institute of Information Technology Allahabad\\
	 prayagraj, 211015, UP, India.}
\email{Saurabhverma@iiita.ac.in}

%
%\author{ Mrinal Kanti Roychowdhury}
%\address{School of Mathematical and Statistical Sciences\\
%University of Texas Rio Grande Valley\\
%1201 West University Drive\\
%Edinburg, TX 78539-2999, USA.}
%\email{mrinal.roychowdhury@utrgv.edu}

\subjclass[2010]{60Exx, 94A34, 28A80.}
\keywords{Condensation measure, optimal quantizers, quantization error,  quantization dimension, quantization coefficient, discrete distribution, uniform distribution}
%%\thanks{}
	\date{}
\maketitle

\pagestyle{myheadings}\markboth{S. Dubey, M.K. Roychowdhury, and S. Verma }{Quantization for a condensation system}

\begin{abstract}
	For a given $r \in (0, +\infty)$, the quantization dimension of order $r$, if it exists, denoted by $D_r(\mu)$, represents the rate at which the $n$th quantization error of order $r$ approaches to zero as the number of elements $n$ in an optimal set of $n$-means for $\mu$ tends to infinity. If $D_r(\mu)$ does not exist, we define $\underline{D}_r(\mu)$ and $\overline{D}_r(\mu)$ as the lower and the upper quantization dimensions of $\mu$ of order $r$, respectively. In this paper, we investigate the quantization dimension of the condensation measure $\mu$ associated with a condensation system $(\{S_j\}_{j=1}^N, (p_j)_{j=0}^N, \nu).$ We provide two examples: one where $\nu$ is an infinite discrete distribution on $\mathbb{R}$, and one where $\nu$ is a uniform distribution on $\mathbb{R}$. For both the discrete and uniform distributions $\nu$, we determine the optimal sets of $n$-means, and calculate the quantization dimensions of condensation measures $\mu$, and show that the $D_r(\mu)$-dimensional quantization coefficients do not exist. Moreover, we demonstrate that the lower and upper quantization coefficients are finite and positive.
\end{abstract}
\section{Introduction}
Various types of dimensions, including Hausdorff and packing dimensions or the lower and the upper box-counting dimensions are important to characterize the complexity of highly irregular sets. In recent years, paralleling methods have been adopted to study the corresponding dimensions of measures (see \cite{F}). In this paper, we investigate the quantization dimensions of the \tit{condensation measures}. The quantization problem consists in studying the quantization error induced by the approximation of a given probability measure with discrete probability measures of finite supports. This problem originated in information theory and some engineering technology. For rigorous mathematical foundation of this theory one can see Graf-Luschgy's book \cite{GL2}. Further theoretical results and promising applications are contained in \cite{AW, GG, GKL, GL1, GN, Z1}. Two important objects in the quantization theory are the quantization coefficient and the quantization dimension. 

Let $\gm$ be a Borel probability measure on a $d$-dimensional normed space $\D R^d$ equipped with a metric $\rho$ induced by the norm $\|\cdot\|$. 
Let $r \in (0, +\infty)$ and  $n \in \D N$, where $\D N$ is the set of natural numbers, and $\ga\ci \D R^d$ be a locally finite (i.e., intersection of $\ga$ with any bounded subset of $\D R^d$ is finite) subset of $\D R^d$. This implies that $\ga$ is countable and closed. Then, the \tit{$n$th quantization error} of order $r$ for $\gm$ is defined by
\[V_{n, r}(\gm):=\te{inf}\set{\int \rho(x, \ga)^r d\gm(x): \ga \ci \D R^d, \, \te{card}(\ga) \leq n},\]
where $\rho(x, \ga)$ denotes the distance from the element $x$ to the set $\ga$ with respect to a given norm $\|\cdot\|$ on $\D R^d$.
 If $\int \| x\|^r d\gm(x)<\infty$, then there is some set $\ga$ for which the infimum is achieved (see \cite{AW, GKL, GL1, GL2}). Such a set $\ga$ is called an \tit{optimal set of $n$-means}, or \tit{optimal set of $n$-quantizers}. For some recent work in this direction one can see \cite{F1, DL, DR, GL1, GL2, GL3, LM, P, RR, R1,  R3, R5, R6, Z1, Z2, Z3}.
An optimal set $\ga$ of $n$-means can then be used to give a best approximation of $\gm$ by a discrete probability supported on a set with no more than $n$ elements. This can be done by giving each element $a \in \ga $ a mass corresponding to $\gm(M(a|\ga))$, where $M(a|\ga)$ is the set of elements $x \in \D R^d$ which are nearest to $a$, i.e., $\rho(x, \ga)=\rho(x,a)$. So, the set $\set{M(a|\ga) : a \in \ga}$ is the \tit{Voronoi diagram} or \tit{Voronoi tessellation} of $\D R^d$ with respect to $\ga$. Of course, the idea of `best approximation' is, in general,
dependent on the choice of $r$. Such a set $\ga$ for which the infimum occurs and contains no more than $n$ elements is called an \tit{optimal set of $n$-means}, or \tit{optimal set of $n$-quantizers} (of order $r$). An optimal set of $n$-means for a probability distribution $\gm$ is also referred to as $V_{n, r}(\gm)$-optimal set. The collection of all optimal sets of $n$-means for a probability distribution $\gm$ is denoted by $\C C_{n, r}(\gm)$. It can be shown that for a continuous Borel probability measure $\gm$, an optimal set of $n$-means always has exactly $n$ elements (see \cite{GL2}).
The numbers
\[\underline D_r(\gm):=\liminf_{n\to \infty} \frac{r\log n}{-\log V_{n, r}(\gm)}, \te{ and }  \ol D_r(\gm):=\limsup_{n\to \infty}  \frac{r\log n}{-\log V_{n,r}(\gm)}\]
are, respectively, called the \tit{lower} and the \tit{upper quantization dimensions} of $\gm$ of order $r$. If $\ol D_r (\gm)=\underline D_r (\gm)$, the common value is called the \tit{quantization dimension} of $\gm$ of order $r$, and is denoted by $D_r(\gm)$. Quantization dimension measures the speed at which the specified measure of the error approaches to zero as $n$ tends to infinity.
For any $\gk_r>0$, the two numbers
\[\underline Q_r^{\gk_r}(\gm):=\liminf_n n^{r/\gk_r} V_{n, r}(\gm) \te{ and } \ol Q_r^{\gk_r}(\gm):=\limsup_n  n^{r/\gk_r} V_{n, r}(\gm)\] are, respectively, called the \tit{$\gk_r$-dimensional lower} and the \tit{upper quantization coefficients} for $\gm$ (of order $r$). The quantization coefficients provide us with more accurate information about the asymptotics of the quantization error than the quantization dimension. Compared to the calculation of quantization dimension, it is usually much more difficult to determine whether the lower and the upper quantization coefficients are finite and positive.
Let $\gn$ be a Borel probability measure on $\D R^d$ with compact support $C$, and $(p_0, p_1, p_2, \cdots, p_N)$ be a probability vector with $p_j>0$ for all $1\leq j\leq N$, where $N$ is a positive integer and $N\geq 2$. Let $S_1, S_2, \cdots, S_N$ be a family of contractive mappings on $\D R^d$. Then, there exist a unique Borel probability measure $\gm$ on $\D R^d$ and a unique nonempty compact set $K$ satisfying \[\gm=\sum_{j=1}^N p_j \gm\circ S_j^{-1} +p_0\gn \te{ and } K=\mathop{\uu}\limits_{j=1}^N S_j(K)\uu C.\] Following \cite{B, L1}, we call $(\set{S_j}_{j=1}^N, (p_j)_{j=0}^N, \gn)$ a condensation system. The measure $\gm$ is called the \tit{attracting measure} or the \tit{condensation measure} for $(\set{S_j}_{j=1}^N, (p_j)_{j=0}^N, \gn)$, and the set $K$, which is the support of the measure $\gm$, is called the \tit{attractor} or \tit{invariant set} for the system. If each $S_j$ is contractive similitude,  then such a measure $\gm$ is also termed as \tit{inhomogeneous self-similar measure} (see \cite{OS1}). There are several works done on condensation system, for example, $L^q$ spectra and R\'enyi dimensions of inhomogeneous self-similar measures were studied by Olsen-Snigireva, and then by Liszka (see \cite{L2, OS2}).
 Let $\gm$ be a condensation measure on $\D R^d$ associated with a condensation system $(\set{S_j}_{j=1}^N, (p_j)_{j=0}^N, \gn)$, where $\gn$ is any Borel probability measure on $\D R^d$ with a compact support. Let $K$ be the attractor of the condensation system, and $C$ be the support of $\gn$. We say that $\set{S_1(K), S_2(K), \cdots, S_N(K), C}$ satisfies the \tit{strong separation condition} (SSC) if $S_1(K), S_2(K), \cdots, S_N(K), C$ are pairwise disjoint. On the other hand, $\set{S_1(K), S_2(K), \cdots, S_N(K), C}$ satisfies the \tit{inhomogeneous open set condition} (IOSC), which is the modified version of the inhomogeneous open set condition proposed in \cite{OS1}, if there exists a bounded nonempty open set $U \sci \D R^d$ such that the following conditions are satisfied:
$(i)$ $\mathop{\uu}\limits_{j=1}^N S_j(U) \sci U$ and $S_i(U)\ii S_j(U)=\es$ for $1\leq i\neq j\leq N$;
$(ii)$ $K\ii U\neq \es$ \te{ and } $C\sci U$;
$(iii)$ $P(\partial(U))=0$ and $C\ii S_j(\te{cl}(U))=\es$ for all $1\leq j\leq N$.
Notice that if  $\set{S_1(K), S_2(K), \cdots, S_N(K), C}$ satisfies the SSC, then it also satisfies the IOSC. Take $\set{S_j}_{j=1}^N$ as a family of bi-Lipschitz mappings on $\D R^d$, i.e., there exist $0<a_j\le b_j<1,$ such that $a_j\rho(x,y) \le \rho(S_j(x),S_j(y)) \le b_j \rho(x,y)$ for all $1 \le j \le N,$ and for all $x,y \in \D R^d,$ we call $a_j$ and $b_j$ the lower and the upper bi-Lipschitz constants, respectively. For $r\in (0, +\infty)$, let $ l_r, \gk_r \in(0, +\infty)$ be the unique numbers such that $\sum_{j=1}^{N} (p_j a_{j}^r)^{\frac{l_r}{r+l_r}}=1$ and $\sum_{j=1}^{N} (p_j b_{j}^r)^{\frac{\gk_r}{r+\gk_r}}=1$, respectively. In \cite{PRV}, Priyadarshi et al. showed that, under the strong separation condition,  $\max \set{l_r, \underline D_r(\gn)} \le \underline D_r(\gm)$. Further, let $ \set{\D R^d; g_1,g_2,\cdots,g_M}$ be an IFS with invariant set $C,$ satisfying the strong open set condition such that $\rho(g_i(x),g_i(y)) \le c_i \rho(x,y)$ for all $x,y \in \D R^d,$ where $0 < c_i<1$ and $1\le i \le M.$ For a given probability vector $(q_1,q_2,\cdots,q_M),$ let $\gn$ be the invariant measure associated with the IFS  $ \set{\D R^d; g_1,g_2,\cdots,g_M}.$ Then $\ol D_r(\gm)\le \max \set{\gk_r, d_r},$ where $d_r$ is uniquely determined by the relation $\sum_{i=1}^{M} (q_i c_{i}^r)^{\frac{d_r}{r+d_r}}=1.$ In our recent work \cite{DRV}, by considering the Borel probability measure $\gn$ as the image measure of an ergodic measure with bounded distortion on the symbolic space $\set{1,2,\cdots,M}^ \D N$ with support a conformal set $C,$ which is a generalization of \cite{PRV}, we have shown that
\[\ol D_r(\gm)\le \max \set{\gk_r, D_r(\gn)}.\] The following problem remains open (also see Remark~\ref{rem24}). 
\begin{open}
Let $\gn$ be any Borel probability measure and $\gm$ be the condensation measure associated with the condensation system $(\{S_i\}_{i=1}^N, (p_i)_{i=0}^N, \gn).$ Then, whether under the strong separation condition, 
\[
\max \set{l_r, \underline D_r(\gn)}\leq \underline D_r(\gm)\leq  \ol D_r(\gm)\leq \max \set{\gk_r, \ol D_r(\gn)}, 
\]
is not known yet. 
\end{open} 
\begin{deli}
	In this paper, we estimate the quantization dimension of a condensation system given by  $(\{S_j\}_{j=1}^N, (p_j)_{j=0}^N, \gn).$
	In this regard, in Section~\ref{sec1} and Section~\ref{sec2}, we give two examples of condensation measures $\gm$ associated with a condensation system $(\{S_j\}_{j=1}^N, (p_j)_{j=0}^N, \gn),$ consisting of contractive similarity mappings $S_j,$ with similarity ratios $s_j$ for $1 \leq j \leq N$ such that one takes $\gn$ as an infinite discrete distribution on $\D R$, and one takes $\gn$ as a uniform distribution on $\D R$, which is the `main result' of the paper. Notice that in the two examples, we fix $r=2$, and the underlying norm is the Euclidean distance. When $r=2$, and $\gk_r$ is determined by the relation $\sum_{j=1}^N (p_j s_j)^{\frac{\gk_r}{r+ \gk_r}},$ we denote $\gk_r$, $D_r(\gm)$, and $D_r(\gn)$, respectively, by $\gk$, $D(\gm)$, and $D(\gn)$. In Section~\ref{sec1}, with respect to the squared Euclidean distance, we first determine the optimal sets of $n$-means, the $n$th quantization error, and the quantization dimension of an infinite discrete distribution $\gn$ with a compact support. For the discrete distribution $\gn$, we see that $D(\gn)=0$. Then, in Section~\ref{sec1}, we also show that  $D(\gm)=\gk=\max\set{\gk, D(\gn)}$, and the $D(\gm)$-dimensional quantization coefficient does not exist, and the lower and the upper quantization coefficients are finite and positive. In Section~\ref{sec2}, we take $\gn$ as a uniform distribution with a compact support, determine the optimal sets of $n$-means, the $n$th quantization error, and show that $D(\gm)=1=D(\gn)=\max\set{\gk, D(\gn)}$, and the $D(\gm)$-dimensional quantization coefficient does not exist, and the lower and the upper quantization coefficients are finite and positive.
\end{deli}

 \section{Preliminaries}\label{sec0}
First we introduce some basic definitions, and then state and prove some lemmas and propositions. By a word $\go$ of length $k$ over the alphabet $I:=\set{1, 2, \cdots, N}$, we mean $\go:=\go_1\go_2\cdots \go_k \in I^k$. A word of length zero is called the empty word and is denoted by $\es$. Length of a word $\go$ is denoted by $|\go|$. By $I^\ast$, it is meant the set of all words over the alphabet $I$ including the empty word $\es$. For any two words $\go:=\go_1\go_2\cdots \go_{|\go|}$ and $\gt:=\gt_1\gt_2\cdots\gt_{|\gt|}$ in $I^\ast$, by $\go\gt$ it is meant the concatenation of the words $\go$ and $\gt$, i.e., $\go\gt:=\go_1\go_2\cdot\go_{|\go|}\gt_1\gt_2\cdots\gt_{|\gt|}$. Let $S_j$ be the bi-Lipschitz mappings with the lower and upper bi-lipschitz constants $a_j$ and $b_j$, respectively, for $1\leq j\leq N$, $(p_0, p_1, \cdots, p_N)$ be the probability vector, and $\gn$ be the Borel probability measure on $\D R^d$ with a compact support $C$ in the condensation system. Let $K$ be the attractor of the condensation system as defined before.
For $\go=\go_1\go_2 \cdots\go_k \in I^k$, we define
 \[S_\go:=S_{\go_1} \circ \cdots \circ S_{\go_k}, \quad p_\go:=\prod_{j=1}^kp_{\go_j},\te{ and }  K_\go:=S_\go(K).\] For the empty word $\es$ in $I^\ast$, by $S_\es$ it is meant the identity mapping on $\D R$. For every $n\geq 1$, by iterating, one easily gets,
\begin{align} \label{eq010} K= \Big(\UU_{|\go|=n}S_\go(K)\Big)\uu \Big(\UU_{k=0}^{n-1} \UU_{|\go|=k} S_\go (C)\Big), \te{ and }
\gm= \sum_{|\go|=n} p_\go \gm\circ S_\go^{-1}+p_0\sum_{k=0}^{n-1} \sum_{|\go|=k} p_\go \gn \circ S_\go^{-1}.
\end{align}
We call $\gG \sci I^\ast$ a \tit{finite maximal antichain} if $\gG$ is a finite set of words in $I^\ast$, such that every sequence in $\set{1, 2, \cdots, N}^{\D N}$ is an extension of some word in $\gG$, but no word of $\gG$ is an extension of another word in $\gG$.
\begin{prop} \label{prop010} Let $r\in(0, +\infty)$. Further, let $l_r\in(0, +\infty)$ be defined by $\sum_{j=1}^{N} (p_ja_j^r)^{\frac{l_r}{r+l_r}}=1$.
	Then,
	\[
	\max \set{l_r, \underline D_r(\gn)} \leq \underline D_r(\gm) \te{ and } \max \set{l_r, \ol D_r(\gn)}\leq \ol D_r(\gm).
	\]
\end{prop}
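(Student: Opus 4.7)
The plan is to prove the two conjunct inequalities by establishing, separately, the bounds $\ul D_r(\gn)\le \ul D_r(\gm)$, $\ol D_r(\gn)\le \ol D_r(\gm)$, $l_r\le \ul D_r(\gm)$, and $l_r\le \ol D_r(\gm)$. Since the arguments for the lower and the upper quantization dimensions are entirely parallel, I would describe only the $\liminf$ case. The first bound is nearly immediate from the condensation identity: for any finite set $\ga\sci\D R^d$ with $|\ga|\le n$, the identity $\gm=\sum_{j=1}^Np_j\gm\circ S_j^{-1}+p_0\gn$ gives
\[
\int\rho(x,\ga)^r\,d\gm(x)=\sum_{j=1}^Np_j\!\int\rho(x,\ga)^r\,d(\gm\circ S_j^{-1})(x)+p_0\!\int\rho(x,\ga)^r\,d\gn(x)\ge p_0V_{n,r}(\gn),
\]
so after taking the infimum, $V_{n,r}(\gm)\ge p_0V_{n,r}(\gn)$; the additive constant $-\log p_0$ is absorbed in the limit defining $\ul D_r$, yielding $\ul D_r(\gm)\ge\ul D_r(\gn)$.

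For the bound $l_r\le\ul D_r(\gm)$, I would follow the antichain strategy of Priyadarshi et al.~\cite{PRV}. For each $t>0$, consider the finite maximal antichain $\gG_t:=\set{\go\in I^\ast: p_\go a_\go^r\le t<p_{\go^-}a_{\go^-}^r}$, where $\go^-$ is the parent of $\go$, and iterate \eqref{eq010} to decompose $\gm$ into self-similar pieces $p_\go\gm\circ S_\go^{-1}$, $\go\in\gG_t$, supported on the pairwise disjoint sets $S_\go(K)$ by SSC, plus residual $\gn$-terms. Using the lower bi-Lipschitz bound $\rho(S_\go(x),S_\go(y))\ge a_\go\rho(x,y)$ together with the disjointness of the pieces, the contribution of the piece labelled by $\go$ to the quantization error is at least $p_\go a_\go^r V_{n_\go,r}(\gm)$, where $n_\go$ is the number of codebook points of $\ga$ assigned to that piece, with $\sum_\go n_\go=n$. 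Minimising $\sum_\go p_\go a_\go^r V_{n_\go,r}(\gm)$ over non-negative integers $(n_\go)$ with $\sum_\go n_\go=n$ via a H\"older-type inequality, and using the defining identity $\sum_{j=1}^N(p_j a_j^r)^{l_r/(r+l_r)}=1$ (which, by induction on antichain height, extends to $\sum_{\go\in\gG_t}(p_\go a_\go^r)^{l_r/(r+l_r)}=1$ for every finite maximal antichain), then yields a polynomial lower bound $V_{n,r}(\gm)\ge c\,n^{-r/l_r}$ with $c>0$ independent of $n$, and hence $\ul D_r(\gm)\ge l_r$.

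The chief technical obstacle is that the inequality $V_{n,r}(\gm)\ge\sum_{\go\in\gG_t}p_\go a_\go^r V_{n_\go,r}(\gm)$ is self-referential, since $V_{n_\go,r}(\gm)$ reappears on the right-hand side. Bootstrapping it to the explicit rate $n^{-r/l_r}$ requires either a contradiction argument, assuming $\liminf_n n^{r/l_r}V_{n,r}(\gm)=0$ and driving $t\to 0^+$ along $\gG_t$ so that the factors $p_\go a_\go^r$ become uniformly comparable to $t$, or an induction on $n$ that simultaneously tracks the antichain scale. A secondary subtlety is controlling the residual $\gn$-terms so that they do not degrade the lower bound; under the SSC they sit on sets disjoint from the self-similar pieces and contribute only non-negative mass, which is harmless. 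Replacing $\liminf$ by $\limsup$ throughout produces the $\ol D_r$ half of the proposition verbatim.
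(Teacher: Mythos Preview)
Your proposal is correct and takes essentially the same approach as the paper. The paper's proof simply combines Lemma~\ref{lemma0031} and Corollary~\ref{cor010}, both cited from \cite{PRV}; your proposal sketches precisely how those cited results are established --- the condensation identity yielding $V_{n,r}(\gm)\ge p_0V_{n,r}(\gn)$ for the $\gn$-bound, and the antichain decomposition under SSC together with the lower bi-Lipschitz constants for the $l_r$-bound --- so the two agree completely at the level of ideas.
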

\begin{lemma}(see, \cite[Lemma~3.1]{PRV})   \label{lemma0031} Let $r\in(0, +\infty)$, and let $\gm$ be the condensation measure associated with the condensation system $(\set{S_j}_{j=1}^N, (p_j)_{j=0}^N, \gn)$. Then,
\[\underline D_r(\gn)\leq \underline D_r(\gm) \te{ and } \ol D_r(\gn)\leq \ol D_r(\gm).\]
\end{lemma}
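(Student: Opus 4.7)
The plan is to exploit the fixed-point equation $\gm=\sum_{j=1}^N p_j\,\gm\circ S_j^{-1}+p_0\gn$, which immediately gives $\gm\ge p_0\gn$ as measures. This pointwise domination transfers to a comparison of quantization errors, and the desired dimension inequalities then follow by elementary manipulation of $\liminf$ and $\limsup$.

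Concretely, I would first fix an arbitrary $\ga\sci\D R^d$ with $\te{card}(\ga)\le n$ and integrate $\rho(x,\ga)^r$ against the fixed-point equation to obtain
\[
\int \rho(x,\ga)^r\,d\gm(x)=\sum_{j=1}^N p_j\int \rho(x,\ga)^r\,d(\gm\circ S_j^{-1})(x)+p_0\int \rho(x,\ga)^r\,d\gn(x)\ge p_0\int \rho(x,\ga)^r\,d\gn(x).
\]
Taking the infimum over all such $\ga$ yields the key inequality $V_{n,r}(\gm)\ge p_0\,V_{n,r}(\gn)$. (Implicit here is $p_0>0$, which is the only case where the statement is nontrivial; if $p_0=0$, then $\gn$ plays no role in the condensation system and the conclusion is vacuous.)

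Next, I would take logarithms to get $-\log V_{n,r}(\gm)\le -\log V_{n,r}(\gn)-\log p_0$ and hence
\[
\frac{r\log n}{-\log V_{n,r}(\gm)}\ge \frac{r\log n}{-\log V_{n,r}(\gn)-\log p_0}=\frac{r\log n}{-\log V_{n,r}(\gn)}\cdot\frac{-\log V_{n,r}(\gn)}{-\log V_{n,r}(\gn)-\log p_0}.
\]
Since $V_{n,r}(\gn)\to 0$ as $n\to\infty$ (because $\gn$ has compact support and any fixed finite set becomes increasingly inadequate), we have $-\log V_{n,r}(\gn)\to+\infty$, so the second factor tends to $1$. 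Passing to the $\liminf$ (resp.\ $\limsup$) then gives $\ul D_r(\gm)\ge\ul D_r(\gn)$ and $\ol D_r(\gm)\ge\ol D_r(\gn)$.

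There is essentially no obstacle: the argument is a one-line domination plus a logarithmic comparison. The only point requiring a touch of care is checking that $V_{n,r}(\gn)\to 0$ so that the correction term $\log p_0$ is asymptotically negligible; this is standard and follows from the existence of a finite $r$th moment for $\gn$ together with denseness of finite sets.
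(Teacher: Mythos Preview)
Your argument is correct and is essentially the standard one: the paper does not give its own proof of this lemma but cites \cite[Lemma~3.1]{PRV}, and the key inequality $V_{n,r}(\gm)\ge p_0 V_{n,r}(\gn)$ you derive is precisely (a weakening of) the lower bound the paper records in Remark~\ref{rem24}. Two small expository points: your parenthetical justification that ``any fixed finite set becomes increasingly inadequate'' is backwards (you mean that increasingly large finite sets approximate $\gn$ arbitrarily well, hence $V_{n,r}(\gn)\to 0$), and you should also note separately that if $V_{n,r}(\gn)=0$ for some $n$ (e.g.\ $\gn$ finitely supported) then $\ul D_r(\gn)=\ol D_r(\gn)=0$ and the conclusion is trivial, so one may assume $V_{n,r}(\gn)>0$ throughout before dividing by $-\log V_{n,r}(\gn)$.
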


\begin{lemma}(see, \cite[Proposition 3.2]{PRV})  \label{lemma32} Assume that the sets $S_1 K,\dots, S_N K, C$ satisfy the strong separation condition. Let $r\in(0, +\infty)$. Let $l_r\in(0, +\infty)$ be defined by $\sum_{j=1}^{N} (p_ja_j^r)^{\frac{l_r}{r+l_r}}=1$. Let $\gm$ be the condensation measure associated with the condensation system $(\set{S_j}_{j=1}^N, (p_j)_{j=0}^N, \gn)$, and $e_{n, r}(\gm):=V_{n, r}^{1/r}(\gm)$. Then, it follows that
\[
\liminf_{n\to \infty} n e_{n, r}^{l_r}(\gm) >0.
\]
\end{lemma}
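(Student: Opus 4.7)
Since the statement is drawn verbatim from \cite[Proposition 3.2]{PRV}, my plan is to reproduce that argument, which adapts the Graf--Luschgy technique for self-similar measures (see \cite{GL2}) to the condensation setting. The heart of the proof is a self-referential lower bound on $V_{n,r}(\gm)$ derived from the invariance identity $\gm=\sum_{j=1}^N p_j(\gm\circ S_j^{-1})+p_0\gn$.

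First, I would fix $\gd>0$ realising the SSC between $S_1(K),\dots,S_N(K),C$, and consider an optimal $n$-quantizer $\ga^*$ of $\gm$. I partition its centers into classes $\ga_1,\dots,\ga_N$, where $\ga_j$ collects the centers whose Voronoi cell meets $S_j(K)$. For $n$ large enough, the fact that $V_{n,r}(\gm)\to 0$ forces the cell diameters below $\gd/4$, so the classes are pairwise disjoint and $n_j:=|\ga_j|$ satisfies $\sum_{j=1}^N n_j\le n$. For $x\in S_j(K)$ the nearest center in $\ga^*$ lies in $\ga_j$, so $d(x,\ga^*)=d(x,\ga_j)$. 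Using the lower bi-Lipschitz bound $\|S_j(y)-S_j(z)\|\ge a_j\|y-z\|$ to transport the distance back to $y=S_j^{-1}(x)$, and discarding the nonnegative contribution of the $p_0\gn$ term, this yields
\[
V_{n,r}(\gm)\;\ge\;\sum_{j=1}^N p_j\,a_j^r\,V_{n_j,r}(\gm),\qquad \sum_{j=1}^N n_j\le n.
\]

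Second, I would set $\gF(n):=n\,V_{n,r}(\gm)^{l_r/r}$ and argue by contradiction: suppose $\liminf_n\gF(n)=0$. Exploiting the defining identity $\sum_{j=1}^N(p_ja_j^r)^{l_r/(r+l_r)}=1$ through H\"older's inequality with the conjugate exponents $(r+l_r)/r$ and $(r+l_r)/l_r$, one turns the additive recursion above into a multiplicative lower bound of the form $\gF(n)\ge \min_j \gF(n_j)$, for the Lagrange-optimal allocation $n_j\approx n\,(p_ja_j^r)^{l_r/(r+l_r)}$; the identity guarantees that $\sum_j n_j\le n$ after rounding. Iterating this descent on a subsequence along which $\gF\to 0$ eventually lands at a bounded range of indices where $\gF$ is trivially positive, producing the desired contradiction.

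The main obstacle is the book-keeping in the second step: the rounding in $n_j\approx n\,(p_ja_j^r)^{l_r/(r+l_r)}$ must respect the feasibility $\sum_j n_j\le n$ uniformly for all large $n$, each $n_j$ must grow to infinity so that the recursion can be reapplied, and the Voronoi-cell diameter control in the first step has to be justified quantitatively from the elementary bound $V_{n,r}(\gm)\to 0$. The $p_0\gn$ summand plays no role in this lower bound (it is dropped as nonnegative), which is precisely why the threshold $l_r$ in the claim depends only on the contraction data $(p_j,a_j)$ of $\{S_j\}$ and not on $\gn$.
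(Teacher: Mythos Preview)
The paper does not give its own proof of this lemma; it simply quotes \cite[Proposition~3.2]{PRV}. Your plan is the standard Graf--Luschgy lower-bound argument adapted to the condensation setting, and that is presumably what \cite{PRV} does.

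Two points in your sketch need correction. In step~1, Voronoi-cell diameters do not shrink to zero (the outermost cells are unbounded half-spaces), so disjointness of the $\ga_j$ cannot be argued that way. The usual device is to set $\ga_j=\{a\in\ga^*:d(a,S_j(K))<\gd/2\}$, which are pairwise disjoint by SSC; then $d(x,\ga^*)\ge\min\bigl(d(x,\ga_j),\gd/2\bigr)$ on $S_j(K)$, and one handles the $\gd/2$ cap either by adding one dummy point per class (so $\sum_j n_j\le n+N$) or by noting that $\gm(S_j(K))=p_j>0$ forces $\ga_j\ne\es$ once $V_{n,r}(\gm)<p_j(\gd/2)^r$. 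In step~2 you have the logic inverted: the $n_j$ are \emph{not} chosen by you --- they are handed to you by the optimal quantizer. What the H\"older inequality with exponents $(r+l_r)/l_r$ and $(r+l_r)/r$ actually delivers is $\sum_j p_ja_j^r\,n_j^{-r/l_r}\ge\bigl(\sum_j n_j\bigr)^{-r/l_r}$ for \emph{every} tuple of positive $n_j$ (the Lagrange allocation is merely the equality case), and combining this with the recursion yields $(\sum_j n_j)\,e_{n,r}^{l_r}(\gm)\ge\min_j\gF(n_j)$ for the specific $n_j$ coming from the partition. Once both steps are repaired, the descent still needs care: with the dummy-point version $\sum_j n_j\le n+N$ the indices need not strictly decrease, which is why the argument in \cite{PRV} iterates the invariance to a finite maximal antichain $\gG$ with $|\gG|\asymp n$ and uses pigeonhole to force most $n_\go$ into a bounded range, rather than relying on a one-step descent.
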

\begin{cor}\label{cor010}
Lemma~\ref{lemma32} implies that $l_r\leq \underline D_r(\gm)$.
\end{cor}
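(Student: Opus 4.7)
The plan is to unpack the two definitions $\ul D_r(\gm) = \liminf_{n\to\infty} \frac{r\log n}{-\log V_{n,r}(\gm)}$ and $e_{n,r}(\gm) = V_{n,r}^{1/r}(\gm)$ and feed the lower bound from Lemma~\ref{lemma32} into the first.

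First I would use Lemma~\ref{lemma32} to extract a constant: since $\liminf_{n\to\infty} n\, e_{n,r}^{l_r}(\gm) > 0$, there exist $c > 0$ and $n_0 \in \D N$ with $n\, e_{n,r}^{l_r}(\gm) \geq c$ for every $n \geq n_0$. Rearranging gives $e_{n,r}(\gm) \geq (c/n)^{1/l_r}$, and raising to the $r$th power yields
\[
V_{n,r}(\gm) = e_{n,r}^{r}(\gm) \geq \Bigl(\frac{c}{n}\Bigr)^{r/l_r}\qquad (n \geq n_0).
\]

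Next I would take negative logarithms of both sides (both are positive since $V_{n,r}(\gm)\to 0$), so that for all sufficiently large $n$,
\[
-\log V_{n,r}(\gm) \leq \frac{r}{l_r}\bigl(\log n - \log c\bigr).
\]
Dividing $r\log n$ by this bound gives
\[
\frac{r\log n}{-\log V_{n,r}(\gm)} \geq \frac{r\log n}{\frac{r}{l_r}(\log n - \log c)} = \frac{l_r \log n}{\log n - \log c}.
\]

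Finally I would take $\liminf_{n\to\infty}$ on both sides. The right-hand side tends to $l_r$, so $\ul D_r(\gm) \geq l_r$, which is exactly the stated corollary. There is no real obstacle here: the argument is just a direct translation between the ``error $\times n^{1/l_r}$ bounded below'' formulation and the ``quantization dimension $\geq l_r$'' formulation, and everything reduces to elementary manipulation of logarithms.
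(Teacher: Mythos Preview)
Your argument is correct and is exactly the standard derivation the paper implicitly relies on; the paper states Corollary~\ref{cor010} without a separate proof, treating the passage from $\liminf_n n\,e_{n,r}^{l_r}(\gm)>0$ to $l_r\le \ul D_r(\gm)$ as immediate (it is essentially \cite[Proposition~11.3]{GL2}). Your unpacking via logarithms is the routine justification and matches the intended reasoning.
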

\subsection*{Proof of Proposition~\ref{prop010}}  
Combining Lemma~\ref{lemma0031} and Corollary~\ref{cor010}, we obtain the proof of the proposition. \qed

In the condensation system $(\set{S_j}_{j=1}^N, (p_j)_{j=0}^N, \gn),~\te{if}~ \gn$ is an image measure of an ergodic measure with bounded distortion on the symbolic space $\set{1,2,\cdots,M}^\D N$ with the support a conformal set $C,$ then under strong separation condition $\underline{D}_r(\gm) \leq \overline{D}_r(\gm) \leq \max \{ \gk_r, D_r(\gn) \}.$ For more details see, \cite{DRV}.
\begin{prop}(see, \cite[Theorem 1.4]{DRV})\label{pro99}
Assume that the sets $S_1 K,\dots, S_N K, C$ satisfy the strong separation condition. Let $r\in(0, +\infty)$. Let $\gk_r\in(0, +\infty)$ be defined by $\sum_{j=1}^{N} (p_jb_j^r)^{\frac{\gk_r}{r+\gk_r}}=1$.
Let $C$ be the conformal set generated by a conformal IFS $(\mathbb{R}^d; \{g_i\}_{i=1}^M)$ satisfying the strong separation condition, and $\gn$ is the image measure of an ergodic measure with bounded distortion on the code space $\set{1, 2, \cdots, M}^{\D N}$ with support $C,$ and $\gm$ be the condensation measure associated with the condensation system $(\set{S_j}_{j=1}^N, (p_j)_{j=0}^N, \gn)$, and $e_{n, r}(\gm):=V_{n, r}^{1/r}(\gm)$. Then, it follows that
$\underline{D}_r(\gm) \leq \overline{D}_r(\gm) \leq \max \{ \gk_r,D_r(\gn) \}.$
\end{prop}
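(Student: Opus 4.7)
The plan is to establish the inequality $\ol D_r(\gm) \leq \max\set{\gk_r, D_r(\gn)}$ by fixing an arbitrary $s > \max\set{\gk_r, D_r(\gn)}$ and proving the concrete rate estimate $V_{n,r}(\gm) \leq B\, n^{-r/s}$ for some constant $B=B(s)>0$ and every $n\geq 1$; letting $s$ decrease to $\max\set{\gk_r,D_r(\gn)}$ then yields the stated bound on $\ol D_r(\gm)$ (and automatically on $\ul D_r(\gm)$). The backbone is the one-step functional equation
\[
\gm = \sum_{j=1}^N p_j\, \gm \circ S_j^{-1} + p_0\, \gn.
\]
Allocating $n_j$ quantizers to a near-optimal codebook for $\gm\circ S_j^{-1}$ and $m_0$ quantizers to a near-optimal codebook for $\gn$ with $\sum_j n_j + m_0 \leq n$, and using $V_{n_j,r}(\gm\circ S_j^{-1}) \leq b_j^r V_{n_j,r}(\gm)$ (valid because $b_j$ is the upper Lipschitz constant of $S_j$), one derives
\[
V_{n,r}(\gm) \leq \sum_{j=1}^N p_j b_j^r\, V_{n_j,r}(\gm) + p_0\, V_{m_0,r}(\gn).
\]

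Next, set $\gq := \sum_{j=1}^N (p_j b_j^r)^{s/(r+s)}$, which is strictly less than $1$ by the choice of $s$ relative to $\gk_r$, and observe that $s > D_r(\gn)$ together with the bounded-distortion and ergodicity assumptions on $\gn$ supplies a uniform bound $V_{m,r}(\gn) \leq C_\gn m^{-r/s}$ for every $m\geq 1$. I would pick a small $\alpha\in(0,1)$ with $(1-\alpha)^{-r/s}\,\gq^{(r+s)/s} < 1$ and take the H\"older-optimal allocation $m_0 = \lfloor \alpha n \rfloor$ and $n_j \approx (1-\alpha)n \cdot (p_j b_j^r)^{s/(r+s)}/\gq$. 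Under the inductive hypothesis $V_{k,r}(\gm) \leq B\, k^{-r/s}$ for every $k<n$, the bound above reduces, via the identity $\sum_j p_j b_j^r n_j^{-r/s} = ((1-\alpha)n)^{-r/s}\,\gq^{(r+s)/s}$, to
\[
V_{n,r}(\gm) \leq n^{-r/s}\,\bigl[B\,(1-\alpha)^{-r/s}\,\gq^{(r+s)/s} + p_0 C_\gn\, \alpha^{-r/s}\bigr],
\]
and the induction closes as soon as $B \geq \frac{p_0 C_\gn \alpha^{-r/s}}{1-(1-\alpha)^{-r/s}\gq^{(r+s)/s}}$, a finite choice because $\gq^{(r+s)/s}<1$. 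A base case for small $n$ follows from the compactness of the support of $\gm$.

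The main technical obstacle will be the absence of any self-similar invariance for $\gn$: the argument above relies entirely on the uniform upper bound $V_{m,r}(\gn) = O(m^{-r/s})$, which cannot be iterated the way the $\gm$-pieces can. In the self-similar precursor of Priyadarshi--Roychowdhury--Verma \cite{PRV}, $\gn$ was itself an invariant measure of a standard IFS and this bound followed from the IFS scaling relation; here, because $C$ is only a conformal set and $\gn$ is the image of an ergodic measure with bounded distortion, the uniform cylinder estimates supplied by the bounded-distortion hypothesis are the substitute that delivers the quantization bound on $\gn$, mimicking the role of exact self-similarity up to uniform constants. A secondary subtlety is that the conformal maps $\set{g_i}_{i=1}^M$ lack exact contraction ratios, so one must work with cylinder diameters together with bounded distortion in order to turn the ergodic scaling into the power-law bound required for $V_{m,r}(\gn)$; without this step the $p_0\gn$ contribution could dominate and invalidate the induction.
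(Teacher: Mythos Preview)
The paper does not actually supply a proof of this proposition: it is stated as a citation of \cite[Theorem~1.4]{DRV} and closed immediately with a \qed, so there is no in-paper argument against which to compare yours.

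That said, your outline is the standard and correct route to such an upper bound. The one-step recursion $V_{n,r}(\gm)\le \sum_j p_j b_j^r V_{n_j,r}(\gm)+p_0 V_{m_0,r}(\gn)$, the H\"older allocation $n_j\propto (p_j b_j^r)^{s/(r+s)}$, and the contraction factor $\gq^{(r+s)/s}<1$ coming from $s>\gk_r$ are exactly the ingredients used in Graf--Luschgy and in the self-similar predecessors \cite{PRV,Z2}. Your induction closes for the reason you give, and the base case is harmless by compact support. Two points deserve to be made explicit when you write it out. First, the inequality $V_{m,r}(\gn)\le C_\gn m^{-r/s}$ for every $m\ge1$ does not follow from $s>D_r(\gn)$ alone; you need that $D_r(\gn)$ \emph{exists} (equivalently, that the $s$-dimensional upper quantization coefficient of $\gn$ is finite for $s>D_r(\gn)$), and this is precisely where the conformal IFS with bounded distortion enters, via results of Lindsay--Mauldin \cite{LM} and Roychowdhury \cite{R6}. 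You flag this correctly at the end but should cite it rather than rederive it. Second, the floor operations in $n_j=\lfloor (1-\ga)n\,(p_j b_j^r)^{s/(r+s)}/\gq\rfloor$ cost a multiplicative factor of at most $2^{r/s}$ once $n$ is large enough that each $n_j\ge1$; absorb this into $B$ and treat the finitely many small $n$ in the base case.
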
 \qed

Proposition \ref{pro99} leads us to the following corollory.
\begin{cor}
Let $s_j:=a_j=b_j$ in the above theorem, i.e., if $S_j$ are the similarity mappings for $1\leq j\leq N$ in the condensation system $(\set{S_j}_{j=1}^N, (p_j)_{j=0}^N, \gn)$ and $\gk_r$ is uniquely determined by the relation $\sum_{j=1}^N(p_j s^r_j)^{\frac{\gk_r}{r+\gk_r}}=1$,  then $D_r(\gm)= \max\{\gk_r, D_r(\gn)\}.$\qed
\end{cor}
In the following remark, we examine whether the inequalities  
\[
\underline{D}_r(\gm) \leq \max \{ \gk_r, \underline{D}_r(\gn) \} \quad \text{and} \quad \overline{D}_r(\gm) \leq \max \{ \gk_r, \overline{D}_r(\gn) \}
\]  
hold in general for a condensation system $(\set{S_j}_{j=1}^N, (p_j)_{j=0}^N, \gn)$, when $\gn$ is any Borel probability measure not necessarily obtained by some IFS or an image measure of an ergodic measure. However, we cannot make any conclusive statement regarding their validity. In Section \ref{sec1} and Section \ref{sec2}, we give two examples in which for $r=2$, both $D_r(\gn)$ and $D_r(\gm)$ exist and satisfy the following relations: \[
\underline D_r(\gm)=\max \set{\gk_r, \underline D_r(\gn)} \te{ and }  \ol D_r(\gm)= \max \set{\gk_r, \ol D_r(\gn)}.
\]

\begin{remark}\label{rem24}
Proceeding in a similar way as \cite[Lemma~4.14]{GL2}, we have
\begin{align*} \label{eq111}V_{n, r} (\gm) \left\{ \begin{array} {ll} \geq p_0 V_{n, r}(\gn) +\sum_{j=1}^N p_js_j^r V_{n, r}(\gm),
\\
\leq p_0 V_{\lfloor \frac n{N+1}\rfloor, r}(\gm)+\sum_{j=1}^N p_js_j^r  V_{\lfloor \frac n{N+1}\rfloor, r}(\gm),
\end{array}
\right.
\end{align*}
where  $\lfloor x\rfloor$ represents the greatest integer not exceeding $x$ for any real $x$. Thus,  for any $s>0$, we have
\begin{align*}
& (1-\sum_{j=1}^N p_j s_j^r)\liminf_n n^{r/s} V_{n, r}(\gn) \leq \liminf_n n^{r/s} V_{n, r}(\gm) \leq \limsup_n n^{r/s} V_{n, r}(\gm) \\
& \leq  p_0 2^{r/s} (N+1)^{r/s}\limsup_n n^{r/s} V_{n, r}(\gn) +\sum_{j=1}^N p_js_j^r  2^{r/\gk_r} (N+1)^{r/\gk_r}\limsup_n n^{r/s} V_{n, r}(\gm).
\end{align*}
Let $s=\max\set {\gk_r, \underline D_r(\gn)}$, or $s=\max\set {\gk_r, \ol D_r(\gn)}$. Then, from the last inequalities, we see that $\liminf_n n^{r/s} V_{n, r}(\gm)$ and $\limsup_n n^{r/s} V_{n, r}(\gm)$, can either or both be zero, positive, or infinity. Thus, we can not draw any useful conclusion whether the following inequalities are correct:
\[
\underline D_r(\gm)\leq \max \set{\gk_r, \underline D_r(\gn)}, \te{ and }  \ol D_r(\gm) \leq \max \set{\gk_r, \ol D_r(\gn)}.
\]
\end{remark}

In the sequel we will denote the condensation measure as $P$. Take $r=2$, and consider the condensation system as $(\set{S_1, S_2}, (\frac 13, \frac 13, \frac 13), \gn)$ on the Euclidean space $\D R$, i.e., the condensation measure $P$ is given by $P:=\frac 13 P \circ S_1^{-1}+\frac 13 P\circ S_2^{-1}+\frac 13 \gn$, where $\gn$ is a Borel probability measure on $\D R$ with a compact support. We define the two contractive similarity mappings $S_1$ and $S_2$ on $\D R$ as follows: $S_1(x)=\frac 15 x$ and $S_2(x)=\frac 1 5 x+\frac 45$ for all $x\in \D R$.  Then, for $n\in \D N$, by \eqref{eq010}, we have
\begin{equation} \label{eq020} P=\frac 1{3^n} \mathop{\sum}\limits_{|\go|=n} P\circ S_{\go}^{-1} +\mathop{\sum}\limits_{k=0}^{n-1}\frac 1{3^{k+1}} \mathop{\sum}\limits_{|\go|=k} \gn \circ S_{\go}^{-1}.
\end{equation}
Write $J:=[0, 1]$. For $\go=\go_1\go_2 \cdots\go_k \in I^k$, set  $J_\go:=S_\go(J)$, and $C_\go:=S_\go(C)$.

The following lemma can easily be proved.

\begin{lemma} Let $K$ be the support of the condensation measure. Then, for any $n\geq 1$,
\[K\sci (\mathop{\uu}\limits_{\go\in I^n} J_{\go})\UU  \Big (\mathop{\uu}\limits_{k=0}^{n-1}(\mathop{\uu}\limits_{\go\in I^k} C_\go)\Big )\sci J.\]
\end{lemma}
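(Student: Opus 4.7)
The plan is to prove both inclusions by exploiting the self-similarity identity $K=S_1(K)\cup S_2(K)\cup C$ together with the evident fact that the unit interval $J=[0,1]$ is invariant under both maps $S_1,S_2$ and contains the support $C$ of $\gn$.

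First I would verify the anchor $K\ci J$. Since $S_1(J)=[0,\tfrac15]\ci J$, $S_2(J)=[\tfrac45,1]\ci J$, and (by the standing assumption $C\ci J$) also $C\ci J$, the set $J$ is closed under the Hutchinson-type operator $T(A):=S_1(A)\uu S_2(A)\uu C$. Because $K$ is the unique nonempty compact fixed point of $T$, a standard argument (iterating $T$ on any compact set containing $K\uu J$ and using the contraction property in the Hausdorff metric) forces $K\ci J$. In particular $S_\go(K)\ci S_\go(J)=J_\go$ and $C_\go=S_\go(C)\ci J_\go$ for every $\go\in I^\ast$.

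Next I would establish the first inclusion by induction on $n$. For $n=1$, the condensation identity gives
\[K=S_1(K)\uu S_2(K)\uu C\ci J_1\uu J_2\uu C=\Big(\UU_{|\go|=1}J_\go\Big)\uu\Big(\UU_{|\go|=0}C_\go\Big).\]
For the inductive step, assume the inclusion holds for some $n\geq 1$. Applying $T$ to the identity $K=\UU_{|\go|=n}S_\go(K)\uu\UU_{k=0}^{n-1}\UU_{|\go|=k}C_\go$ (which follows by iterating the condensation equation), one has
\[K=\UU_{|\go|=n+1}S_\go(K)\uu\UU_{k=0}^{n}\UU_{|\go|=k}C_\go,\]
and then replacing each $S_\go(K)$ (with $|\go|=n+1$) by the larger set $J_\go$ yields the claim at level $n+1$.

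For the second inclusion I would simply note that, as observed above, $J_\go\ci J$ for every $\go\in I^\ast$ (since $S_j(J)\ci J$ and composition preserves this), and $C_\go\ci J_\go\ci J$ as well. Taking unions preserves containment, so the right-hand side of the displayed formula sits inside $J$. The only nontrivial step in the whole argument is the anchor $K\ci J$; everything else is a direct induction on the depth of the condensation identity, and no quantization-theoretic input is needed.
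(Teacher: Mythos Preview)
Your proof is correct. The paper does not actually give a proof of this lemma; it only remarks that ``the following lemma can easily be proved'' and moves on, relying on the iterated identity \eqref{eq010} already stated just above. Your argument is exactly the natural one the reader is expected to supply: once $K\ci J$ is established (via $T(J)\ci J$ and uniqueness of the attractor), the first inclusion is immediate from \eqref{eq010} together with $S_\go(K)\ci S_\go(J)=J_\go$, and the second from $J_\go,C_\go\ci J$. One small remark: your phrasing ``iterating $T$ on any compact set containing $K\uu J$'' is slightly imprecise---the clean statement is that $T(J)\ci J$ (which uses the standing assumption $C\ci J$, valid in both of the paper's examples since $C\ci[\tfrac25,\tfrac35]$), whence $T^n(J)\ci J$ for all $n$ and $K=\lim_n T^n(J)\ci J$. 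Also note that since the paper already records \eqref{eq010}, you do not strictly need the induction for the first inclusion; replacing each $S_\go(K)$ by $J_\go$ in \eqref{eq010} gives it in one line.
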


By equation~\eqref{eq020}, we can deduce the following lemma.

\begin{lemma} \label{lemma1} Let $g: \D R \to \D R^+$ be Borel measurable and $n\in \D N$. Then,
\[\int g(x) \,dP(x)=\frac 1 {3^n} \sum_{|\go|=n}\int (g\circ S_\go)(x) \,dP(x)+\sum_{k=0}^{n-1} \frac 1 {3^{k+1}}\sum_{|\go|=k} \int (g\circ S_\go)(x) \,d\gn(x).\]
\end{lemma}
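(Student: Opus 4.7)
The plan is to derive the identity directly from the iterated decomposition of $P$ given in equation~\eqref{eq020}, combined with the standard change-of-variables formula for push-forward measures.

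First, I would fix $n \in \D N$ and start from the identity
\[P=\frac 1{3^n} \sum_{|\go|=n} P\circ S_{\go}^{-1} +\sum_{k=0}^{n-1}\frac 1{3^{k+1}} \sum_{|\go|=k} \gn \circ S_{\go}^{-1},\]
which was already established in~\eqref{eq020} by iterating the self-referential relation $P=\frac13 P\circ S_1^{-1}+\frac13 P\circ S_2^{-1}+\frac13 \gn$. Because both sides are finite Borel measures and the equation is an equality of measures on every Borel subset of $\D R$, integration of a non-negative Borel measurable function $g$ against both sides preserves the equality; since the sums on the right are finite, I can interchange the sum and the integral without any convergence issue, obtaining
\[\int g(x)\,dP(x)=\frac 1{3^n}\sum_{|\go|=n}\int g(x)\,d(P\circ S_\go^{-1})(x)+\sum_{k=0}^{n-1}\frac 1{3^{k+1}}\sum_{|\go|=k}\int g(x)\,d(\gn\circ S_\go^{-1})(x).\]

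Next I would apply the change-of-variables formula for a push-forward measure: for any Borel probability measure $\mu$ and any Borel measurable $S_\go:\D R\to\D R$, one has $\int g\,d(\mu\circ S_\go^{-1})=\int (g\circ S_\go)\,d\mu$. This is the standard fact, proved first for indicator functions (where it reduces to the definition $(\mu\circ S_\go^{-1})(A)=\mu(S_\go^{-1}(A))$), extended to simple non-negative functions by linearity, and then to arbitrary non-negative Borel measurable $g$ by monotone convergence, which is exactly the hypothesis $g:\D R\to\D R^+$. Applying this term-by-term to each of the finitely many summands yields
\[\int g(x)\,dP(x)=\frac 1{3^n}\sum_{|\go|=n}\int (g\circ S_\go)(x)\,dP(x)+\sum_{k=0}^{n-1}\frac 1{3^{k+1}}\sum_{|\go|=k}\int (g\circ S_\go)(x)\,d\gn(x),\]
which is the claimed identity.

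There is essentially no obstacle here: the whole argument is a direct consequence of \eqref{eq020} together with the push-forward change-of-variables formula, and the non-negativity of $g$ is used only to freely invoke monotone convergence when extending beyond simple functions. A short alternative would be to prove the lemma by induction on $n$, using the base case $n=1$ (the defining condensation equation) and the inductive step obtained by applying the $n=1$ identity inside each integral $\int (g\circ S_\go)\,dP$ appearing in the level-$n$ decomposition; this would avoid quoting \eqref{eq020} but produces the same expression after re-indexing.
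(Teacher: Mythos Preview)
Your proposal is correct and follows exactly the approach indicated in the paper, which simply states that the lemma is deduced from equation~\eqref{eq020}. You have merely filled in the standard details (integrating both sides and applying the push-forward change-of-variables formula) that the paper leaves implicit.
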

The following lemma that appears in \cite{CR} is also true.

\begin{lemma} (see \cite[Lemma~3.7]{CR})  \label{lemma001} Let $\ga$ be an optimal set of $n$-means for the condensation measure $P$. Then, for any $\go \in I^\ast$, the set $S_\go(\ga):=\set{S_\go(a) : a \in \ga}$ is an optimal set of $n$-means for the image measure $P\circ S_\go^{-1}$. Conversely, if $\gb$ is an optimal set of $n$-means for the image measure $P\circ S_\go^{-1}$, then $S_\go^{-1}(\gb)$ is an optimal set of $n$-means for $P$.
\end{lemma}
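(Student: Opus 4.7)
The plan is to exploit the fact that each $S_j$ (and hence each $S_\go$) is a contractive similarity on $\D R$ with ratio $s_\go=(1/5)^{|\go|}$, so distances scale exactly: $\rho(S_\go(x),S_\go(y))=s_\go\,\rho(x,y)$ for all $x,y\in\D R$. Consequently, for any finite subset $A\ci\D R$ and any $x\in\D R$, one has $\rho(S_\go(x),S_\go(A))=s_\go\,\rho(x,A)$, since the nearest-point map commutes with the similarity.

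Next, I would invoke the change-of-variables formula for image measures: for any Borel measurable $g:\D R\to\D R^+$,
\[
\int g(y)\,d(P\circ S_\go^{-1})(y)=\int (g\circ S_\go)(x)\,dP(x).
\]
Applying this with $g(y)=\rho(y,\gb)^2$, where $\gb\ci\D R$ has at most $n$ points, and writing $\gb'=S_\go^{-1}(\gb)$ (which has the same cardinality, since $S_\go$ is injective), gives
\[
\int \rho(y,\gb)^2\,d(P\circ S_\go^{-1})(y)=\int\rho(S_\go(x),S_\go(\gb'))^2\,dP(x)=s_\go^2\int\rho(x,\gb')^2\,dP(x).
\]

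Now I would take the infimum over all $\gb$ with $\te{card}(\gb)\leq n$ on the left-hand side. Because the map $\gb\mapsto S_\go^{-1}(\gb)$ is a bijection from such subsets to themselves, this infimum equals $s_\go^2\,V_{n,2}(P)$. Hence $V_{n,2}(P\circ S_\go^{-1})=s_\go^2\,V_{n,2}(P)$, and the identity above shows that $\gb$ attains the infimum on the left precisely when $\gb'=S_\go^{-1}(\gb)$ attains the infimum on the right. This gives both directions of the lemma simultaneously: if $\ga$ is an optimal set of $n$-means for $P$, then $S_\go(\ga)$ is optimal for $P\circ S_\go^{-1}$, and conversely if $\gb$ is optimal for $P\circ S_\go^{-1}$, then $S_\go^{-1}(\gb)$ is optimal for $P$.

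There is no substantive obstacle here: the whole proof rests on the exact scaling of Euclidean distance under similarities and the standard change of variables. The only point worth being careful about is that the argument uses the \emph{equality} $\rho(S_\go(x),S_\go(y))=s_\go\rho(x,y)$, which holds because each $S_j$ is a genuine similitude; for general bi-Lipschitz maps one would only get two-sided inequalities and the conclusion would fail to be an exact correspondence of optimal sets. Since the similarities $S_1,S_2$ fixed in the paper are of this form, the argument applies directly.
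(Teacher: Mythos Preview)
Your argument is correct and is the standard one: exact scaling of distances under similitudes plus change of variables for the image measure gives the identity $\int\rho(y,\gb)^2\,d(P\circ S_\go^{-1})=s_\go^2\int\rho(x,S_\go^{-1}(\gb))^2\,dP$, from which both directions follow at once via the bijection $\gb\leftrightarrow S_\go^{-1}(\gb)$ on finite subsets. The paper itself does not supply a proof of this lemma; it simply cites \cite[Lemma~3.7]{CR}, so there is nothing substantive to compare beyond noting that your write-up is exactly what that citation would unpack to.
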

In the following two sections, we investigate the quantization for two condensation measures $P$: in Section~\ref{sec1}, $P$ is associated with an infinite discrete distribution $\gn$ borrowed from our recent paper \cite{DRV}; and in Section~\ref{sec2}, $P$ is associated with a uniform distribution.
\section{Quantization for a condensation measure associated with a discrete distribution} \label{sec1}
Let $\gn$ be an infinite discrete distribution with support $C:=\set{a_j : j\in \D N}$, where $a_j:=\frac{2}{5}+\frac 1 5\frac{2^{j-1}-1}{2^{j-1}}$. Notice that $C=\left\{\frac{2}{5},\frac{1}{2},\frac{11}{20},\frac{23}{40},\frac{47}{80},\frac{19}{32},\frac{191}{320},\frac{383}{640}, \cdots \right\}\sci [\frac 25, \frac 35]\sci \D R$. Let the probability mass function of $\gn$ be given by $f$ such that
\[f(x)=\left\{\begin{array}{cc}
\frac 1{2^j} & \te{ if }   x=a_j \te{ for } j\in \D N,  \\
 0 &  \te{ otherwise}.\end{array}\right.\]
  Set $A_k:=\set{a_k, a_{k+1}, a_{k+2}, \cdots}$, and $b_k=E(X: X \in A_k)$, where $k\in \D N$. Then, by the definition of conditional expectation, we see that
 \[b_k=\frac{\sum_{j=k}^\infty a_j\frac 1{2^j}}{\sum_{j=k}^\infty \frac 1{2^j}}.\]
 For any Borel measurable function $g$ on $\D R$, by $\int g d\gn$, it is meant
 \[\int g d\gn=\sum_{j=1}^\infty g(a_j) \gn(a_j)=\sum_{j=1}^\infty g(a_j)\frac 1{2^j}.\]
 Then $E(\gn)=\frac{7}{15}, \te{ and } V(\gn)=\frac{8}{1575},$ where $E(\gn)$ and $V(\gn)$ represent the expected value of a random variable with distribution $\gn,$ and its variance, respectively.
The quantization dimension $D(\gn)$ of the measure $\gn$ exists and equals zero, for more details one can see \cite{DRV}.
In order to compute the quantization dimension of condensation measure $P$ associated with discrete distribution $\gn,$ first we give the following lemmas and proposition.
%\begin{lemma} \label{lemma12}  Let $E(\gn)$ represent the expected value and $W:=V(\gn)$ represent the variance of the discrete measure $\gn$. Then,
%$E(\gn)=\frac 12$ and $W=\frac{1}{100}$, and for any $x_0\in \D R$, $\int(x-x_0)^2 d\gn=(x_0-\frac 12)^2+V(\gn)$.
%\end{lemma}
%
%\begin{proof}
%We have \[\int x d\gn(x)=\frac 12 \int x 1_{\frac 25}(x)d\gn(x)+\frac 12 \int x 1_{\frac 35}(x)d\gn(x)=\frac 12,\] and \[\int x^2 d\gn(x)=\frac 12 \int x^2 1_{\frac 25}(x)dx+\frac 12 \int x^2 1_{\frac 35}(x)dx=\frac 12(\frac 4{25}+\frac{9}{25})=\frac{13}{50}.\]
%Hence, $V(\gn)=\int x^2 d\gn-(\int x d\gn)^2=\frac{13}{50}-\frac{1}{4}=\frac{1}{100}.$  For any $x_0\in \D R$, $\int(x-x_0)^2 d\gn=(x_0-\frac 12)^2+V(\gn)$ follows from the standard theory of probability.
%\end{proof}

\begin{lemma} \label{lemma3} Let $E(P)$ represent the expected value and $V:=V(P)$ represent the variance of the condensation measure $P$. Then,
$E(P)=\frac{19}{39},$ $V=\frac{86696}{777231}$, and for any $x_0\in \D R$, $\int(x-x_0)^2 dP=(x_0-\frac{19}{39})^2+V(P)$.
\end{lemma}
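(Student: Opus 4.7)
The plan is to exploit the invariance relation for $P$ via Lemma~\ref{lemma1} at $n=1$, applied successively to $g(x)=x$ and $g(x)=x^2$. Since $E(\gn)=\frac{7}{15}$ and $V(\gn)=\frac{8}{1575}$ are already recorded, I also have
\[\int x^2\,d\gn = V(\gn)+E(\gn)^2 = \tfrac{8}{1575}+\tfrac{49}{225}=\tfrac{39}{175},\]
so every quantity on the right-hand side of each invariance equation is known except the unknown moment of $P$ itself, which can then be solved for.

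For the mean, Lemma~\ref{lemma1} with $g(x)=x$ and $n=1$ gives
\[E(P)=\tfrac{1}{3}\!\int\!\bigl(S_1(x)+S_2(x)\bigr)dP(x)+\tfrac{1}{3}\!\int x\,d\gn(x).\]
Substituting $S_1(x)+S_2(x)=\tfrac{2}{5}x+\tfrac{4}{5}$ and $E(\gn)=\tfrac{7}{15}$ reduces this to the linear equation $E(P)=\tfrac{2}{15}E(P)+\tfrac{4}{15}+\tfrac{7}{45}$, whose unique solution is $\tfrac{19}{39}$.

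For the second moment, the same lemma with $g(x)=x^2$ and the expansion $S_1(x)^2+S_2(x)^2=\tfrac{2}{25}x^2+\tfrac{8}{25}x+\tfrac{16}{25}$, combined with the values $E(P)=\tfrac{19}{39}$ and $\int x^2\,d\gn=\tfrac{39}{175}$, yields a single linear equation in $\int x^2\,dP$. Solving it and then computing $V(P)=\int x^2\,dP-E(P)^2$ requires subtracting two fractions whose least common denominator is $3^2\cdot 7\cdot 13^2\cdot 73=777231$, exactly the target denominator; the numerator should collapse to $86696$.

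Finally, for arbitrary $x_0\in\D R$ the identity
\[\int(x-x_0)^2\,dP=(x_0-\tfrac{19}{39})^2+V(P)\]
is the usual bias-variance decomposition: expanding $(x-x_0)^2=(x-E(P))^2+2(x-E(P))(E(P)-x_0)+(E(P)-x_0)^2$ and integrating, the cross term vanishes because $\int(x-E(P))\,dP=0$. The only real obstacle is careful bookkeeping of rational arithmetic to land exactly on $\tfrac{86696}{777231}$; conceptually the argument is a short appeal to the self-similarity of $P$ together with the elementary translation identity for the second moment.
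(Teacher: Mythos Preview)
Your proposal is correct and follows essentially the same approach as the paper: both apply Lemma~\ref{lemma1} at $n=1$ to $g(x)=x$ and $g(x)=x^2$, use the known moments of $\gn$ (with $\int x^2\,d\gn=\tfrac{39}{175}$), solve the resulting linear equations for $E(P)$ and $\int x^2\,dP=\tfrac{6953}{19929}$, and then invoke the standard bias--variance identity for the final translation formula.
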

\begin{proof}
By Lemma~\ref{lemma1} and Lemma~\ref{lemma001}, we can see that $E(P)=\int x dP=\frac{19}{39}$. Again,
$\int x^2 d\gn=\sum_{j=1}^\infty a_j^2 \frac 1{2^j}=\frac{39}{175}.$ We have
\begin{align*}
&\int x^2 dP=\frac 1 3 \int  (S_1(x))^2 dP+\frac 13 \int (S_2(x))^2 dP +\frac 13 \int x^2 d\gn\\
&=\frac 1 3  \int\Big (\frac {1} {5} x\Big)^2 dP +\frac 13  \int \Big(\frac 15  x +\frac 45 \Big)^2 dP+\frac 13 \frac{39}{175}\\
&= \frac 1{75} \int x^2 dP +\frac 1{75} \int x^2 dP +\frac 8{75} \int x dP+\frac {16}{75} +\frac{13}{175}
\end{align*}
which implies $\int x^2 dP=\frac{6953}{19929}$, and hence,
$V(P)=\int x^2 dP-(\int x dP)^2=\frac{6953}{19929}-\left(\frac{19}{39}\right)^2=\frac{86696}{777231}$. For any $x_0 \in \D R$, $\int(x-x_0)^2 dP=(x_0-\frac{19}{39})^2+V(P)$ follows from the standard theory of probability. Thus, the proof of the lemma is complete.
\end{proof}
We now give the following proposition.
\begin{prop}
Let $\go\in I^k$, $k\geq 0$, and let $X$ be the random variable with probability distribution $P$. Then,
$E(X : X \in J_\go)=S_\go(\frac{19}{39}), \te{ and }E(X : X \in C_\go)=S_\go(\frac 7{15}).
$
Moreover, for any $x_0\in \D R$, any $\go\in I^k$, $k\geq 0$, we have
\begin{align} \label{eq4} \left\{\begin{array}{cc}  \int_{J_\go} (x-x_0)^2 dP(x) =\frac 1{3^k} \Big(\frac 1{25^k} V  +(S_\go(\frac{19}{39})-x_0)^2\Big), \\
\int_{C_\go} (x-x_0)^2 dP(x) =\frac 1{3^{k+1}} \Big(\frac 1{25^k} V(\gn)  +(S_\go(\frac{7}{15})-x_0)^2\Big),\\
\int_{S_\go(\set {a_1, a_2})} (x-x_0)^2 dP(x) =\frac 1{3^{k+1}}\Big((S_\go(a_1)-x_0)^2\frac 1{2}+(S_\go(a_2)-x_0)^2\frac 1{2^2}\Big), \te{ and } \\
\int_{S_\go(A_3)} (x-x_0)^2 dP(x) =\frac 1{3^{k+1}}\sum_{j=3}^\infty (S_\go(a_j)-x_0)^2\frac 1{2^j}.\end{array}\right.
\end{align}
\end{prop}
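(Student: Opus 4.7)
The plan is to reduce the proposition to two \emph{localization identities} for the condensation measure $P$:
\begin{align*}
\int_{J_\go} g(x)\, dP(x) &= \frac{1}{3^k}\int (g\circ S_\go)(x)\, dP(x), \\
\int_{C_\go} g(x)\, dP(x) &= \frac{1}{3^{k+1}}\int (g\circ S_\go)(x)\, d\gn(x),
\end{align*}
valid for every bounded Borel $g:\D R\to\D R$ and every $\go\in I^k$. Once these are in hand, the conditional means and all four displayed integrals in \eqref{eq4} follow mechanically by specializing $g$ and invoking Lemma~\ref{lemma3} together with the previously computed moments of $\gn$.

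To prove the localization identities I would induct on $k=|\go|$. The base case $k=0$ is immediate: $P(J)=1$ gives the first, and for the second one uses that $C\sci[\tfrac{2}{5},\tfrac{3}{5}]$ is disjoint from $S_1(J)\cup S_2(J)$, so plugging $g\,\mathbf{1}_C$ into the one-step self-similarity $P=\tfrac{1}{3} P\circ S_1^{-1}+\tfrac{1}{3} P\circ S_2^{-1}+\tfrac{1}{3}\gn$ leaves only the $\gn$-term, yielding $\int_C g\,dP=\tfrac{1}{3}\int g\,d\gn$. For the inductive step, write $\go'=j\go$ with $j\in\set{1,2}$, so that $J_{\go'}=S_j(J_\go)$. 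Substituting $g\,\mathbf{1}_{J_{\go'}}$ into the same self-similarity identity, the strong separation of $S_1(J),S_2(J),C$ forces $\mathbf{1}_{J_{\go'}}\circ S_i\equiv 0$ on $\mathrm{supp}(P)$ for $i\neq j$ and $\mathbf{1}_{J_{\go'}}\equiv 0$ on $C$, so a single term survives and gives $\int_{J_{\go'}}g\,dP=\tfrac{1}{3}\int_{J_\go}(g\circ S_j)\,dP$; the inductive hypothesis then closes the step. The argument for $C_{\go'}$ is entirely analogous.

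With the identities established, the conditional mean on $J_\go$ follows by choosing $g(x)=x$ and using that $S_\go$ is affine, so $\int S_\go(x)\,dP(x)=S_\go(E(P))=S_\go(\tfrac{19}{39})$, and likewise $\int S_\go(x)\,d\gn(x)=S_\go(\tfrac{7}{15})$ delivers the mean on $C_\go$. For the first two lines of \eqref{eq4}, I would set $g(x)=(x-x_0)^2$, expand using $S_\go(x)=\tfrac{1}{5^k}x+t_\go$ for an appropriate translation $t_\go$, and apply Lemma~\ref{lemma3} (respectively, the analogous identity $\int(x-x_0)^2 d\gn=(x_0-\tfrac{7}{15})^2+V(\gn)$); the linear cross-term vanishes at the mean, and the variance contribution scales by $\tfrac{1}{25^k}$ from the squared contraction ratio, producing exactly the stated expressions. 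Finally, the last two formulas follow because the $C_\go$ identity shows that $P$ restricted to $C_\go$ assigns mass $\tfrac{1}{3^{k+1}}\cdot\tfrac{1}{2^j}$ to each atom $S_\go(a_j)$, so the integrals over $S_\go(\set{a_1,a_2})$ and $S_\go(A_3)$ reduce to an immediate two-term sum and a series, respectively. The only real obstacle is the careful book-keeping needed to confirm that the strong separation condition eliminates every cross-term in the inductive step; everything else is routine affine scaling and standard variance manipulation.
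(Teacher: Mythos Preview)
Your proposal is correct and follows essentially the same route as the paper: the paper's proof is a two-line appeal to the identities $P(J_\go)=3^{-k}$, $P(C_\go)=3^{-(k+1)}$ from \eqref{eq020} together with Lemma~\ref{lemma1}, which is exactly the content of your localization identities, and then Lemma~\ref{lemma3} supplies the moments. You have simply written out in detail the separation argument that the paper leaves implicit.
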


\begin{proof}  By equation~\eqref{eq020}, we have $P(J_\go)=\frac 1{3^k}$ and $P(C_\go)=\frac 1{3^{k+1}}$. Then, by Lemma~\ref{lemma1}, the proof follows.
\end{proof}

\begin{note}
By Lemma~\ref{lemma3}, it follows that the optimal set of one-mean for the condensation measure $P$ consists of the expected value $\frac{19}{39}$ and the corresponding quantization error is the variance $V(P)$ of $P$, i.e., $V_1(P)=V(P)$.
\end{note}

\subsection{Essential lemmas and propositions}
In this subsection, we give some lemmas and propositions that we need to determine the optimal sets of $n$-means and the $n$th quantization errors for all $n\geq 2$. To determine the quantization error we will frequently use the formulas given in the expression \eqref{eq4}.
\begin{prop} (see \cite{GG, GL1})\label{prop1000}
	Let $\ga$ be an optimal set of $n$-means, $a \in \ga$, and $M(a|\ga)$ be the Voronoi region generated by $a\in \ga$.
	Then, for every $a \in\ga$,
	$(i)$ $\gm(M(a|\ga))>0$, $(ii)$ $ \gm(\partial M(a|\ga))=0$, $(iii)$ $a=E(X : X \in M(a|\ga))$, and $(iv)$ $\gm$-almost surely the set $\set{M(a|\ga) : a \in \ga}$ forms a Voronoi partition of $\D R^d$.
\end{prop}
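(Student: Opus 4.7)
My plan is to treat each of the four parts in turn, leveraging only the defining optimality of $\ga$ together with two perturbation moves: \emph{removing} one center (for (i)) and \emph{replacing} one center (for (iii)). For part (i), I would argue by contradiction. If $\gm(M(a|\ga)) = 0$ for some $a \in \ga$, then for $\gm$-a.e.\ $x \in \D R^d$ one has $\rho(x, \ga \setminus \{a\}) = \rho(x, \ga)$, so the $(n-1)$-point set $\ga \setminus \{a\}$ achieves the same $L^r$-error:
\[
V_{n-1, r}(\gm) \leq \int \rho(x, \ga \setminus \{a\})^r \, d\gm(x) = \int \rho(x, \ga)^r \, d\gm(x) = V_{n, r}(\gm).
\]
Combined with $V_{n-1, r}(\gm) \geq V_{n, r}(\gm)$, this forces equality, contradicting the strict decrease of the quantization error sequence, which holds whenever $\mathrm{supp}(\gm)$ is infinite --- the case for every condensation measure $P$ considered here.

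For (iii), I would fix $a \in \ga$ and replace it by an arbitrary $a' \in \D R^d$. Since $(\ga \setminus \{a\}) \cup \{a'\}$ has at most $n$ elements,
\[
V_{n, r}(\gm) \leq \sum_{b \in \ga,\, b \neq a} \int_{M(b|\ga)} \rho(x, b)^r \, d\gm(x) + \int_{M(a|\ga)} \rho(x, a')^r \, d\gm(x),
\]
while by the Voronoi decomposition of $\ga$ itself, $V_{n, r}(\gm) = \sum_{b \in \ga} \int_{M(b|\ga)} \rho(x, b)^r \, d\gm(x)$. Subtracting, $a$ minimizes the function $a' \mapsto \int_{M(a|\ga)} \rho(x, a')^r \, d\gm(x)$. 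For $r = 2$ the unique minimizer is the conditional expectation, yielding $a = E(X : X \in M(a|\ga))$.

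For parts (ii) and (iv), the Voronoi cells automatically form a tessellation of $\D R^d$ up to their common boundaries, and each $\partial M(a|\ga)$ lies in a finite union of perpendicular bisecting hyperplanes between $a$ and the other centers. Hence (iv) reduces to $\gm\bigl(\bigcup_{a \in \ga} \partial M(a|\ga)\bigr) = 0$, which is exactly (ii). To establish (ii) itself I would argue by contradiction: if a bisecting hyperplane $H$ between two neighbors $a, b \in \ga$ carried positive $\gm$-mass, then that mass admits two different partition conventions (it can be attached either to $M(a|\ga)$ or to $M(b|\ga)$); by the centroid condition from (iii), at most one of these conventions can simultaneously place $a$ and $b$ at the centroids of their respective redefined cells, so the other, combined with a recentering step, would strictly reduce the quantization error and contradict optimality.

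The main obstacle will be part (ii). While (i) and (iii) are clean one-step perturbation arguments valid for any probability measure with infinite support, the null-boundary property is delicate here because the atoms of $\gn$ propagate under the iteration $P = \frac{1}{3} P \circ S_1^{-1} + \frac{1}{3} P \circ S_2^{-1} + \frac{1}{3} \gn$ to every scale, producing a countable set of potential atoms of $P$ that could in principle sit on a bisector. I expect to combine the contradiction argument above with a direct case-check, ensuring that at an optimum no atom of $P$ lies on the perpendicular bisector between two consecutive centers of $\ga$; the specific verification can then be absorbed into the explicit optimal-set computations of Sections~\ref{sec1} and \ref{sec2}.
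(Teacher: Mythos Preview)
The paper does not supply its own proof of this proposition: it is quoted with the citation ``(see \cite{GG, GL1})'' as a standard fact from quantization theory and then used without further argument. There is therefore no in-paper proof to compare your attempt against.

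On the merits of your attempt: parts (i), (iii), and the reduction of (iv) to (ii) are the standard perturbation arguments and are correct. Your sketch for (ii) is essentially on the right track, but the key assertion ``at most one of these conventions can simultaneously place $a$ and $b$ at the centroids'' is doing all the work and is left unjustified. The missing step is short: the replacement argument behind (iii) applies to \emph{every} measurable Voronoi partition (each achieves the same distortion $V_{n,r}$), so under \emph{both} tie-breaking conventions $a$ must equal the centroid of its cell. If $\gm(H)>0$ this makes $a$ the centroid of its cell both with and without $H$ attached, whence $a$ equals the conditional mean of $X$ on $H$; by the symmetric argument so does $b$, contradicting $a\neq b$. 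Once phrased this way there is no ``other convention plus recentering'' step needed --- the contradiction is immediate.

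Your final paragraph, worrying that atoms of $P$ might land on a bisector and require a case-by-case verification folded into Sections~\ref{sec1} and \ref{sec2}, reflects a misconception. The argument above is purely measure-theoretic and uses no continuity or non-atomicity hypothesis on $\gm$; property (ii) holds automatically for every optimal set of any Borel probability measure with infinite support, so no such check is needed.
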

\begin{propo} \label{prop01111}
Let $\ga:=\set{c_1, c_2}$ be an optimal set of two-means with $c_1<c_2$, and let $V_{2,1}$ and $V_{2,2}$, respectively, be the distortion errors contributed by the points $c_1$ and $c_2$. Then, $c_1=\frac{659}{2730}$, $c_2=\frac{1621}{1950}$,
 and the corresponding quantization error is $V_2=V_{2,1}+V_{2,2}=0.0269686$, where $V_{2,1}=\frac{8469163}{466338600} \te{ and } V_{2,2}=\frac{20536627}{2331693000}$, respectively, denote the distortion errors due to $c_1$ and $c_2$.
\end{propo}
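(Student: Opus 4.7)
The plan is to apply the Voronoi--centroid characterisation from Proposition~\ref{prop1000} and then enumerate the possible locations of the Voronoi splitter $t:=\tfrac{1}{2}(c_1+c_2)$. By that proposition, an optimal $\ga=\set{c_1,c_2}$ with $c_1<c_2$ has $c_i=E(X\mid X\in M(c_i\mid\ga))$, both Voronoi regions have positive $P$-mass, and they are the half-lines $(-\infty,t]$ and $(t,\infty)$. Since the support of $P$ sits inside $J_1\cup C\cup J_2=[0,\tfrac{1}{5}]\cup\set{a_j}_{j\ge 1}\cup[\tfrac{4}{5},1]$, three mass-$\tfrac{1}{3}$ blocks separated by gaps, $t$ must lie in exactly one of those gaps. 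This yields three configurations: (A) $t\in(\tfrac{1}{5},\tfrac{2}{5})$, so $J_1\to c_1$ and $C\cup J_2\to c_2$; (B-$m$) $t\in(a_m,a_{m+1})$ for some $m\ge 1$ with the convention $a_\infty=\tfrac{3}{5}$, so $J_1\cup\set{a_1,\dots,a_m}\to c_1$ and the rest $\to c_2$; and (C) $t\in(\tfrac{3}{5},\tfrac{4}{5})$, so $J_1\cup C\to c_1$ and $J_2\to c_2$.

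For each configuration I would compute $c_1$ and $c_2$ as $P$-conditional means, using Lemma~\ref{lemma3} on the self-similar blocks together with the explicit formula $a_j=\tfrac{3}{5}-\tfrac{1}{5\cdot 2^{j-1}}$ and the geometric weights on the discrete block, and then verify self-consistency by checking that $\tfrac{1}{2}(c_1+c_2)$ does lie in the assumed gap. Case (C) is ruled out: it produces $c_1=\tfrac{11}{39}$, $c_2=\tfrac{35}{39}$ with midpoint $\tfrac{23}{39}\approx 0.59\nin(\tfrac{3}{5},\tfrac{4}{5})$. Case (A) is feasible, giving $c_1=\tfrac{19}{195}$, $c_2=\tfrac{133}{195}$; and the case (B-$2$) yields exactly the claimed $c_1=\tfrac{659}{2730}$, $c_2=\tfrac{1621}{1950}$ with midpoint $\tfrac{7321}{13650}\in(a_2,a_3)=(\tfrac{1}{2},\tfrac{11}{20})$, which is self-consistent.

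To compare the distortions across feasible candidates I would invoke the law of total variance: for any two-cell partition $A\sqcup A^c$ with conditional means $c_1,c_2$, the induced distortion equals $V(P)-P(A)P(A^c)(c_2-c_1)^{2}$. Minimising the distortion therefore amounts to maximising $g:=P(A)P(A^c)(c_2-c_1)^{2}$. Direct rational computation shows $g^{(A)}<g^{(B\text{-}1)}<g^{(B\text{-}2)}$ and $g^{(B\text{-}3)}<g^{(B\text{-}2)}$ together with $g^{(C)}<g^{(B\text{-}2)}$. Since as $m\to\infty$ the means $c_i^{(m)}$ converge monotonically to the infeasible (C) values $\tfrac{11}{39}$ and $\tfrac{35}{39}$, a simple monotonicity estimate on $g^{(m)}$ for $m\ge 3$ bounds the whole tail by $\max\set{g^{(B\text{-}3)},g^{(C)}}<g^{(B\text{-}2)}$, so $g^{(m)}<g^{(B\text{-}2)}$ for every $m\ge 3$. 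Thus (B-$2$) is the global optimiser, fixing $c_1=\tfrac{659}{2730}$ and $c_2=\tfrac{1621}{1950}$.

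Finally, the rational values of $V_{2,1}$ and $V_{2,2}$ are then obtained by assembling the contributions via \eqref{eq4} at $\go=1$, $\go=2$, and $\go=\es$:
\[
V_{2,1}=\tfrac{1}{3}\bigl(\tfrac{V}{25}+(\tfrac{19}{195}-c_1)^{2}\bigr)+\tfrac{1}{3}\bigl(\tfrac{1}{2}(a_1-c_1)^{2}+\tfrac{1}{4}(a_2-c_1)^{2}\bigr),
\]
\[
V_{2,2}=\tfrac{1}{3}\sum_{j=3}^{\infty}\tfrac{1}{2^j}(a_j-c_2)^{2}+\tfrac{1}{3}\bigl(\tfrac{V}{25}+(\tfrac{35}{39}-c_2)^{2}\bigr),
\]
where the infinite tail collapses via $a_j=\tfrac{3}{5}-\tfrac{1}{5\cdot 2^{j-1}}$ into three geometric series, simplifying to the stated fractions. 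The principal obstacle throughout is the simultaneous control of the family (B-$m$) for $m\ge 3$; the monotonicity estimate on $g^{(m)}$ indicated above is the cleanest route, since the explicit rational expressions for $c_1^{(m)}$ and $c_2^{(m)}$ make $g^{(m)}$ fully transparent.
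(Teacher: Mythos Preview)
Your approach via the law of total variance is cleaner in spirit than the paper's, which instead fixes an upper bound $V_2\leq 0.0269686$ from the candidate $\gb=\{d_1,d_2\}$ and then rules out bad configurations by direct distortion inequalities. However, two genuine gaps remain.

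First, your case enumeration is incomplete. You assert that the splitter $t=\tfrac{1}{2}(c_1+c_2)$ ``must lie in exactly one of those gaps,'' but nothing you have written excludes $t\in J_1=[0,\tfrac{1}{5}]$ or $t\in J_2=[\tfrac{4}{5},1]$, i.e.\ the possibility that the Voronoi boundary cuts through one of the self-similar blocks. The paper disposes of this by first showing, from the upper bound on $V_2$, that $c_1<\tfrac{2}{5}$ and $c_2>\tfrac{3}{5}$; combined with $0<c_1$ and $c_2<1$ this forces $t\in(\tfrac{3}{10},\tfrac{7}{10})$, so $t$ cannot fall in $J_1$ or $J_2$. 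You need some such argument; a pure self-consistency check does not obviously rule these out, and since $J_1,J_2$ carry fractal mass there is a continuum of potential stationary partitions there that your finite list does not cover.

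Second, the tail control for $m\geq 3$ is not justified. Monotone convergence of $c_1^{(m)}$ and $c_2^{(m)}$ does not by itself imply that $g^{(m)}=P(A^{(m)})P((A^{(m)})^{c})(c_2^{(m)}-c_1^{(m)})^{2}$ is bounded above by $\max\{g^{(B\text{-}3)},g^{(C)}\}$: the product $P(A)P(A^c)$ is decreasing in $m$ while $(c_2-c_1)^2$ is increasing, so $g^{(m)}$ could in principle have an interior bump. You need to exhibit an actual monotonicity (e.g.\ show $g^{(m)}-g^{(m+1)}>0$ for $m\geq 2$ by direct computation with the explicit formulas for $P(A^{(m)})$ and $c_i^{(m)}$), or else revert to the paper's device of writing $F(k)$ explicitly and checking $F(k+1)>F(k)$ for $k\geq 2$.
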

\begin{proof} Consider the set of two points $\gb:=\set{d_1, d_2}$, where $d_1=E(X : X \in J_1\uu \set{a_1, a_2})$ and $d_2=E(X : X \in A_3\uu J_2)$, i.e.,
\begin{align*}
d_1&=\frac{1}{P(J_1\uu \set{a_1, a_2})}\Big(\int_{J_1} x dP+\frac 13(a_1\frac 12+a_2\frac 1{2^2})\Big)=\frac{659}{2730},
\end{align*}
and
\[d_2=\frac{1}{P(A_3\uu J_2)}\Big(\frac 13\sum_{j=3}^\infty a_j\frac 1{2^j}+\int_{J_2}x dP\Big)=\frac{1621}{1950}.\]
The distortion error due to the set $\gb$ is given by
\begin{align*}
&\int\min_{d\in \gb} (x-d)^2 dP\\
& =\int_{J_1}(x-\frac{659}{2730})^2 dP+\frac 1 3\Big((a_1-\frac{659}{2730})^2\frac 12+(a_2-\frac{659}{2730})^2 \frac 1{2^2} + \sum_{j=3}^\infty(a_j-\frac{1621}{1950})^2\frac 1{2^j}\\
&+\int_{J_2}(x-\frac{1621}{1950})^2dP=\frac{499067}{18505500}=0.0269686.
\end{align*}
Since $V_2$ is the quantization error for two-means, we have $V_2\leq 0.0269686$. Let $\ga:=\set{c_1, c_2}$ be an optimal set of two-means for $P$. Suppose that $\frac 25\leq c_1$. Then,
\[V_2\geq \int_{J_1}(x-\frac 25)^2 dP=\frac{47833}{1494675}=0.0320023>V_2,\]
which leads to a contradiction. So, we can assume that $c_1<\frac 25$. Similarly, we can show that $\frac 35<c_2$. Since $\frac 3{10}\leq \frac 12(c_1+c_2)\leq \frac 7{10}$, the Voronoi region of $c_1$ cannot contain any point from $J_2$, and the Voronoi region of $c_2$ cannot contain any point from $J_1$. Suppose that the Voronoi region of $c_1$ does not contain any point from $A_1$. Then,
$c_1=E(X: X \in J_1)=\frac{19}{195}$, and $c_2=E(X : X \in A_2\uu J_2)=\frac{133}{195}$, and then the distortion error is
\[\int_{J_1}(x-c_1)^2 dP+\frac 13\sum_{j=1}^\infty (a_j-c_2)^2 \frac 1{2^j}+\int_{J_2}(x-c_2)^2 dP=\frac{76848}{2158975}=0.0355947>V_2,\]
which leads to a contradiction. Similarly, we can show that if the Voronoi region of $c_2$ does not contain any point from $A_1$ a contradiction will arise. Thus, we conclude that both the Voronoi regions of $c_1$ and $c_2$ contain at least one point from $A_1$. Let the Voronoi region of $c_1$ contains points $a_1, a_2, \cdots, a_k$ from $A_1$, and the Voronoi region of $c_2$ contains $\set{a_j : j> k}$ for some positive integer $k\geq 1$. Let $F(k)$ be the corresponding distortion error. Then,
\[F(k)=\int_{J_1}(x-c_1)^2 dP+\frac 13\Big(\sum_{j=1}^k (a_j-c_1)^2\frac 1{2^j}+\sum_{j=k+1}^\infty (a_j-c_2)^2 \frac 1{2^j}\Big)+\int_{J_2}(x-c_2)^2dP.\]
Using Calculus we see that
$\min\set{F(k) : k\in \D N}=\frac{499067}{18505500}=0.0269686,$
which occurs when $k=2$. This yields the fact that $c_1=E(X: X\in J_1\uu  \set{a_1, a_2})=\frac{659}{2730}$, and $c_2=E(X : X \in A_3\uu J_2)=\frac{1621}{1950}$, and the corresponding quantization error is $V_2= V_{2,1}+V_{2,2}=\frac{499067}{18505500}=0.0269686$, where
$V_{2,1}=\int_{J_1}(x-c_1)^2 dP+\frac 13\sum_{j=1}^2 (a_j-c_1)^2\frac 1{2^j}=\frac{8469163}{466338600}$, and $V_{2,2}=\frac 13\sum_{j=3}^\infty (a_j-c_2)^2 \frac 1{2^j}+\int_{J_2}(x-c_2)^2dP=\frac{20536627}{2331693000}$. Thus, the proof of the proposition is complete.
\end{proof}

\begin{lemma1}  \label{lemma0} Let $\go \in I^k$ for  $k\geq 0$. Then,
\[\int_{J_{\go 1}\uu S_{\go}(\set{a_1, a_2})}(x-S_{\go}(\frac{659}{2730}))^2 dP=\frac {1}{75^{k}}  V_{2,1}, \te{ and } \int_{S_{\go}(A_3)\uu J_{\go 2}}(x-S_{\go}(\frac{1621}{1950}))^2 dP=\frac {1}{75^{k}}V_{2,2}.\]
\end{lemma1}
\begin{proof}For any $\go \in I^k$, $k\geq 0$, we have
\begin{align*}
&\int_{J_{\go 1}\uu S_{\go}(\set{a_1, a_2})}(x-S_{\go}(\frac{659}{2730}))^2 dP=\frac 1{3^k} \int_{J_{\go 1}\uu S_{\go}(\set{a_1, a_2})}(x-S_{\go}(\frac{659}{2730}))^2 d(P\circ S_\go^{-1})\\
&=\frac 1{3^k}\int_{J_1\uu \set{a_1, a_2}}(S_\go(x)-S_{\go}(\frac{659}{2730}))^2 dP=\frac 1{3^k} \frac 1{25^k} \int_{J_1\uu \set{a_1, a_2}}(x-\frac{659}{2730})^2 dP= \frac {1}{75^{k}}  V_{2,1}.
\end{align*}
Similarly, $\int_{S_{\go}(A_3)\uu J_{\go 2}}(x-S_{\go}(\frac{1621}{1950}))^2 dP=\frac {1}{75^{k}}V_{2,2}$. Thus, the proof of the lemma is complete.
\end{proof}

From the above lemma the following corollary follows.
\begin{cor} \label{cor1}
Let $\go \in I^k$ for  $k\geq 0$. Then, for any $a\in \D R$,
\begin{align*}
&\int_{J_{\go 1}\uu S_{\go}(\set{a_1, a_2})}(x-a)^2 dP=\frac 1{3^k}\Big(\frac 1{25^k}  V_{2,1}+\frac{7}{12}(S_\go(\frac{659}{2730})-a)^2\Big), \te{ and } \\
&\int_{S_{\go}(A_3)\uu J_{\go 2}}(x-a)^2 dP=\frac 1{3^k}\Big(\frac 1{25^k} V_{2,2}+\frac 5{12}(S_\go(\frac{1621}{1950})-a)^2\Big).
\end{align*}
\end{cor}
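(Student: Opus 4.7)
The plan is to derive the corollary from Lemma~\ref{lemma0} using the standard variance-decomposition identity: for any Borel set $B$ with $0<P(B)<\infty$ and any $a\in\D R$,
\[
\int_B (x-a)^2\,dP(x)=\int_B (x-c_B)^2\,dP(x)+P(B)(c_B-a)^2,
\]
where $c_B:=\frac{1}{P(B)}\int_B x\,dP(x)$ is the conditional mean of $X$ on $B$. Applied to $B=J_{\go 1}\cup S_\go(\set{a_1,a_2})$ and $B=S_\go(A_3)\cup J_{\go 2}$, this reduces the corollary to evaluating $c_B$ and $P(B)$ in each case, since Lemma~\ref{lemma0} has already computed $\int_B(x-c_B)^2\,dP$ for the specific centroids $S_\go(\tfrac{659}{2730})$ and $S_\go(\tfrac{1621}{1950})$.

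For the masses, I would use \eqref{eq020}: $P(J_{\go j})=\frac{1}{3^{k+1}}$ for $j\in\set{1,2}$ and $P(S_\go(\set{a_j}))=\frac{1}{3^{k+1}}\cdot\frac{1}{2^j}$. Summing then gives
\[
P(J_{\go 1}\cup S_\go(\set{a_1,a_2}))=\frac{1}{3^{k+1}}\Big(1+\frac12+\frac14\Big)=\frac{1}{3^k}\cdot\frac{7}{12},
\]
and similarly $P(S_\go(A_3)\cup J_{\go 2})=\frac{1}{3^{k+1}}\big(\tfrac14+1\big)=\frac{1}{3^k}\cdot\frac{5}{12}$, which are exactly the prefactors $\frac{7}{12}$ and $\frac{5}{12}$ appearing on the right-hand sides.

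For the centroids, I would reuse the pushforward identity $\int_{S_\go(E)}h(x)\,dP(x)=\frac{1}{3^k}\int_E h(S_\go(x))\,dP(x)$ employed in the proof of Lemma~\ref{lemma0}. Applied with $h(x)=x$ and $E=J_1\cup\set{a_1,a_2}$, together with the affine form $S_\go(x)=\frac{1}{5^k}x+t_\go$, a one-line computation gives $c_B=\frac{1}{5^k}c_E+t_\go=S_\go(c_E)$, and Proposition~\ref{prop01111} supplies $c_E=\frac{659}{2730}$, so $c_B=S_\go(\frac{659}{2730})$. The analogous computation for the second region yields $c_B=S_\go(\frac{1621}{1950})$.

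Substituting these values back into the variance-decomposition identity and invoking Lemma~\ref{lemma0} to replace $\int_B(x-c_B)^2\,dP$ by $\frac{1}{75^k}V_{2,i}$ produces the two displayed equalities of the corollary. No substantive obstacle is expected: the argument is a routine application of the variance decomposition, and the only bookkeeping involved is the geometric-series mass computation and the observation that conditional means transform covariantly, $c_B=S_\go(c_E)$, under the affine similarities $S_\go$.
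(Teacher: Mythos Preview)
Your proposal is correct and is exactly the argument the paper has in mind: the paper gives no explicit proof, merely stating that the corollary follows from Lemma~\ref{lemma0}, and the variance-decomposition identity together with the mass and centroid computations you outline is the natural (and essentially only) way to fill that in. Your bookkeeping on $P(B)=\tfrac{7}{12}\cdot 3^{-k}$ and $\tfrac{5}{12}\cdot 3^{-k}$, the covariance of centroids under $S_\go$, and the identification $\tfrac{1}{75^k}=\tfrac{1}{3^k}\cdot\tfrac{1}{25^k}$ are all correct.
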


\begin{propo}\label{prop00002}
Let $\ga:=\set{a_1, a_2, a_3}$ be an optimal set of three-means with $a_1<a_2<a_3$. Then, $a_1=S_1(\frac{19}{39})$, $a_2=E(\gn)=\frac 7{15}$, and $a_3=S_2(\frac{19}{39})$. The corresponding quantization error is $V_3=\frac{30232}{6476925}=0.00466765$.
\end{propo}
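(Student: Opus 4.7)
The plan is to mimic the structure of Proposition~\ref{prop01111}: propose the natural candidate, compute its distortion to obtain an upper bound on $V_3$, and then show that any optimal three-means set must coincide with this candidate.

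For the candidate $\gb:=\set{S_1(\frac{19}{39}),\, \frac{7}{15},\, S_2(\frac{19}{39})}=\set{\frac{19}{195},\, \frac{7}{15},\, \frac{35}{39}}$, the consecutive midpoints $\frac{11}{39}$ and $\frac{133}{195}$ lie inside the gaps $(\frac{1}{5},\frac{2}{5})$ and $(\frac{3}{5},\frac{4}{5})$, respectively, so the three Voronoi regions contain $J_1$, $C$, and $J_2$. Applying \eqref{eq4} with $\go=1$ and $\go=2$, together with $E(\gn)=\frac{7}{15}$ and $V(\gn)=\frac{8}{1575}$, the distortion of $\gb$ equals $\frac{2V}{75}+\frac{V(\gn)}{3}$, which simplifies to $\frac{30232}{6476925}$. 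Hence $V_3\leq \frac{30232}{6476925}$.

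To confirm optimality, let $\ga:=\set{a_1, a_2, a_3}$ with $a_1<a_2<a_3$ be optimal. As in Proposition~\ref{prop01111}, I would first establish $a_1<\frac{2}{5}$ and $a_3>\frac{3}{5}$: if $a_1\geq \frac{2}{5}$, then all three means lie to the right of $J_1$, so $\int_{J_1}\min_j(x-a_j)^2\, dP\geq \frac{1}{3}\cdot\frac{1}{25}=\frac{1}{75}>V_3$, contradicting the upper bound; the symmetric argument handles $a_3$. Since each Voronoi region meets $C$ in a consecutive block of atoms, there exist $i\geq 0$ and $k\in\set{0,1,2,\dots}\uu\set{\infty}$ such that $M(a_1)\ii C=\set{a_1,\dots,a_i}$, $M(a_2)\ii C=\set{a_{i+1},\dots,a_{i+k}}$, and $M(a_3)\ii C=\set{a_{i+k+1},a_{i+k+2},\dots}$; by Proposition~\ref{prop1000}(iii) each $a_j$ is then the conditional expectation of $X$ on its Voronoi region. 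Together with $M(a_1)\supseteq J_1$ and $M(a_3)\supseteq J_2$, this reduces the problem to minimizing a two-parameter family $F(i,k)$ of closed-form distortions assembled from \eqref{eq4} and Corollary~\ref{cor1}.

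The main obstacle I anticipate is the case analysis for $F(i,k)$. Besides the `clean' split $(i,k)=(0,\infty)$ that yields $\gb$, I must rule out splits where a finite block of $\gn$-atoms is peeled off $C$ and absorbed into the Voronoi region of $a_1$ or $a_3$, as well as degenerate cases such as $k=0$ (in which $M(a_2)$ contains no $\gn$-atom and $a_2$ must sit in a low-mass slit between $J_1$ and $J_2$). As in Proposition~\ref{prop01111}, for each split the resulting $F(i,k)$ is an elementary function whose minimum can be found by calculus; I expect every non-canonical split to produce a value strictly exceeding $\frac{30232}{6476925}$. This forces $(i,k)=(0,\infty)$, and hence $a_1=E(X:X\in J_1)=S_1(\frac{19}{39})$, $a_2=E(\gn)=\frac{7}{15}$, and $a_3=E(X:X\in J_2)=S_2(\frac{19}{39})$, as claimed.
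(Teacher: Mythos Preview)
Your upper-bound step is correct and matches the paper. The gap is in the reduction to the two-parameter family $F(i,k)$. You assert ``together with $M(a_1)\supseteq J_1$ and $M(a_3)\supseteq J_2$'', but the only positional information you have derived is $a_1<\frac 25$ and $a_3>\frac 35$. That does not force $M(a_1)\supseteq J_1$: nothing you have proved prevents $a_2$ from lying in $J_1$ or in the gap $(\frac 15,\frac 25)$ (for instance $a_1=\frac 1{10}$, $a_2=\frac 3{20}$, $a_3=\frac 9{10}$), in which case $J_1$ is split between the first two Voronoi regions and your parametrization by $(i,k)$ no longer captures the distortion. The same issue arises on the right. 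So the ``degenerate cases'' you mention are not only $k=0$; you must also handle the possibility that $a_2$ falls outside $[\frac 25,\frac 35]$, and for those configurations neither \eqref{eq4} nor Corollary~\ref{cor1} immediately gives a closed form of the type you need.

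The paper's proof closes exactly this gap by first pinning down sharper positions: it shows $c_1<\frac 15$, then that the Voronoi region of $c_1$ cannot contain any $\gn$-atom, then $c_3>\frac 45$, then $\frac 25<c_2<\frac 35$, each step by comparing a concrete lower bound on the distortion with the upper bound $V_3\leq 0.00466765$. Once $c_2\in(\frac 25,\frac 35)$ is known, the paper argues directly that neither $c_1$'s nor $c_3$'s Voronoi region contains any point of $C$, so no $F(i,k)$ minimization is needed at all---the partition $(J_1,C,J_2)$ is forced. If you want to salvage your approach, you first need a separate argument (of the same contradiction-by-numerics type) that $a_2\in[\frac 25,\frac 35]$; after that your $(i,k)$ scheme would work, with the paper's calculation ``the Voronoi region of $c_1$ does not contain any point from $C$'' essentially being the case $i\geq 1$.
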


\begin{proof} Let $\gb:=\set{S_1(\frac{19}{39}), \frac 7{15}, S_2(\frac{19}{39})}$. Then, using \eqref{eq4}, we have
\begin{align*}
&\int\min_{a\in \gb} (x-a)^2 dP=\int_{J_1}(x-\frac {1}{10})^2 dP+\int_{C}(x-\frac 7{15})^2 dP+\int_{J_2}(x-\frac 9{10})^2 dP\\
&=\frac 13\frac 1{25} V+\frac 13 V(\gn)+\frac 13\frac 1{25} V=\frac{30232}{6476925}=0.00466765.
\end{align*}
Since $V_3$ is the quantization error for three-means, we have $0.00466765\geq V_3$.
 Let $\ga:=\{c_1, c_2, c_3\}$ be an optimal set of three-means with $c_1<c_2<c_3$. Since $c_1$, $c_2$, and $c_3$ are the expected values of their own Voronoi regions, we have $0 <c_1<c_2<c_3< 1$. Suppose that $\frac 15 \leq c_1$. Then,
 \[V_3\geq \int_{J_1}(x-\frac 15)^2 dP=\frac{2488}{498225}=0.00499373>V_3,\]
 which is a contradiction. So, we can assume that $c_1<\frac 15$. We now show that the Voronoi region of $c_1$ does not contain any point from $C$. For the sake of contradiction, assume that the Voronoi region of $c_1$ contains at least one point, say $a_1$, from $C$. Then,
 \[V_3\geq \min\set{\int_{J_1}(x-c_1)^2dP+\frac 13(a_1-c_1)^2 \frac 12 : 0<c_1<\frac 15}=\frac{2038879}{174876975}=0.0116589>V_3,\]
 which leads to a contradiction. Hence, the Voronoi region of $c_1$ does not contain any point from $C$.
 If $c_3\leq (\frac{4}{5}-\frac{1}{1000})$, then
 \[V_3\geq \int_{J_2}(x-(\frac{4}{5}-\frac{1}{1000}))^2 dP=\frac{94007843}{19929000000}=0.00471714>V_3,\]
 which leads to a contradiction. Hence, $(\frac{4}{5}-\frac{1}{1000})<c_3$. Suppose that $c_2\leq  \frac 25$. Then,  $a_9<\frac{1}{2} \left(\frac{2}{5}+(\frac{4}{5}-\frac{1}{1000})\right)<a_{10}$, and so, we have
\begin{align*}
V_3&\geq \int_{J_1}\min_{c\in S_1(\ga_2)}(x-c)^2 dP+\frac 13\sum_{j=1}^9(a_j-\frac 25)^2\frac 1{2^j}+\int_{J_2}(x-S_2(\frac{19}{39})^2 dP\\
&=\frac{1}{75} (V_{2,1}+V_{2,2}) +\frac 13\sum_{j=1}^9(a_j-\frac 25)^2\frac 1{2^j}+\frac 1{75} V=\frac{1221371823004331}{244495731916800000}=0.00499547>V_3,
\end{align*}
which leads to a contradiction. Hence, we can assume that $\frac 25< c_2$. Then, as $\frac 15<\frac 12(c_1+c_2)$, the Voronoi region of $c_2$ does not contain any point from $J_1$. Recall that we already proved that the Voronoi region of $c_1$ does not contain any point from $C$. Assume that $(\frac{4}{5}-\frac{1}{1000})<c_3\leq \frac 45$. Then, $\frac 12(c_2+c_3)\leq \frac 35$ implying $c_2\leq \frac 65-c_3\leq \frac{6}{5}-(\frac{4}{5}-\frac{1}{1000})=\frac{401}{1000}<a_2$, and so
\[V_3\geq \frac 13\sum_{j=2}^\infty(a_j-0.401)^2\frac 1{2^j}+\int_{J_2}(x-\frac 45)^2 dP=\frac{310181843}{39858000000}=0.00778217>V_3,\]
which is a contradiction. So, we can assume that $\frac 45<c_3$. If the Voronoi region of $c_3$ contains points form $C$, we have $\frac 12(c_2+c_3)<\frac 35$ implying $c_2\leq \frac 65-c_3\leq \frac 65-\frac 45=\frac 25$, which leads to a contradiction as we have seen $\frac 25<c_2$. Hence, the Voronoi region of $c_3$ does not contain any point from $C$. If $\frac 35\leq c_2$, then
\begin{align*}
V_3&\geq \int_{J_1}(x-S_1(\frac {19}{39}))^2 dP+\frac 13\sum_{j=1}^\infty(a_j-\frac 35)^2 \frac 1{2^j}+\int_{J_2}\min_{c\in \ga_2}(x-c)^2 dP\\
&=\frac 1{75} V+\frac 13\sum_{j=1}^\infty(a_j-\frac 35)^2 \frac 1{2^j}+\frac 1{75}(V_{2,1}+V_{2,2})=\frac{275894407}{29146162500}=0.00946589>V_3
\end{align*}
which gives a contradiction. So, we can assume that $c_2<\frac 35$. Then, notice that the Voronoi region of $c_2$ does not contain any point from $J_3$ as $\frac 45<c_3< 1$. Hence, we have $c_1=S_1(\frac{19}{39})$, $c_2=E(\gn)=\frac 7{15}$, and $c_3=S_2(\frac{19}{39})$, and the corresponding quantization error is $V_3=\frac{30232}{6476925}=0.00466765$. Thus, the proof of the proposition is complete.
\end{proof}

\begin{lemma1}  \label{lemma00001} Let $\ga$ be an optimal set of four-means. Then $\ga\ii J_1\neq \es$, $\ga\ii J_2\neq \es$, and $\te{card}(\ga\ii C)=2$.
\end{lemma1}

\begin{proof}
The distortion error due to the set $\gb:=\set{S_1(\frac{19}{39}), S_2(\frac{19}{39})}\uu \ga_2(\gn)$ is given by
\[\int \min_{a\in \gb} (x-a)^2 dP=\frac 1{75}V+\frac 13 \sum_{j=2}^\infty (a_j-b_2)^2\frac 1{2^j}+\frac 1{75}V=\frac{185729}{58292325}=0.00318617.\]
Since $V_4$ is the quantization error for four-means, we have $V_4\leq 0.00318617$. Let an optimal set of four-means be given by $\ga:=\set{c_1, c_2, c_3, c_4}$ such that $c_1<c_2<c_3<c_4$. Since $c_1, c_2, c_3$, and $c_4$ are the expected values of their own Voronoi regions, we have $0<c_1<c_2<c_3< c_4<1$. If $c_1\geq \frac 15$, then
\[V_4\geq \int_{J_1}(x-\frac 15)^2 dP=\frac{2488}{498225}=0.00499373>V_4,\]
which leads to a contradiction. So, we can assume that $c_1<\frac 15$. If $c_4\leq \frac 45$, then
 \[V_4\geq \int_{J_2}(x-\frac 45)^2 dP= \frac{6953}{1494675}=0.00465185>V_4,\]
 which is a contradiction. So, we can assume that $\frac 45<c_4$. Thus, we see that $\ga\ii J_1\neq \es$, and $\ga\ii J_2\neq \es$. Assume that $\ga\ii C=\es$. Then, the following cases can arise:

 Case~1. $c_3<\frac 25$.

 Then,
 \[V_4\geq \frac 13 \sum_{j=1}^\infty (a_j-\frac 25)^2 \frac 1{2^j}+\int_{J_2}(x-S_2(\frac {19}{39}))^2 dP=\frac{271751}{58292325}=0.00466187>V_4,\]
 which is a contradiction.

Case~2. $c_2<\frac 25<\frac 35<c_3$.

First, assume that $\frac 7{20}\leq c_2<\frac 25$, and $\frac 35<c_3\leq \frac {13}{20}$. Then, $\frac 12(c_1+c_2)<\frac 15$ implying $c_1<\frac 25-\frac 7{20}=\frac 1{20}$. Similarly $\frac 12(c_3+c_4)>\frac 45$ implying $\frac {19}{20}<c_4$. Then,
\begin{align*} V_4&\geq \int_{S_1(A_3)\uu J_{12}}(x-\frac 1{20})^2 dP+\frac 13 \sum_{j=1}^2 (a_j-\frac 25)^2 \frac 1{2^j}+\frac 13 \sum_{j=3}^\infty (a_j-\frac 35)^2 \frac 1{2^j}\\
&+ \int_{J_{21}\uu S_1(\set{a_1, a_2})}(x-\frac {19}{20})^2 dP=\frac{143519}{27594000}=0.00520109>V_4,
\end{align*}
which gives a contradiction. Next, assume that $\frac 7{20}\leq c_2<\frac 25$, and $\frac {13}{20}\leq c_3$. Then, $c_1<\frac 1{20}$, and $a_2<\frac 12(\frac 25+\frac {13}{20})<a_3$, and so
\begin{align*} V_4&\geq \int_{S_1(A_3)\uu J_{12}}(x-\frac 1{20})^2 dP+\frac 13 \sum_{j=1}^2 (a_j-\frac 25)^2 \frac 1{2^j}+\frac 13 \sum_{j=3}^\infty (a_j-\frac{13}{20})^2 \frac 1{2^j}\\
&=\frac{473663}{137970000}=0.00343309>V_4,
\end{align*}
which leads to a contradiction.
Last, assume that $c_2<\frac 7{20}$, and $\frac{13}{20}<c_3$. Then, as
  $\frac 12(\frac 7{20}+\frac {13}{20})=\frac 12$, and $\int_{J_1} \min_{c\in S_1(\ga_2)}(x-c)^2 dP=\frac 1{75}(V_{2,1}+V_{2,2})$, we have
 \[V_4\geq\frac 1{75}(V_{2,1}+V_{2,2})+ \frac 13 \sum_{j=1}^2 (a_j-\frac 7{20})^2 \frac 1{2^j}+\frac 13 \sum_{j=3}^\infty (a_j-\frac {13}{20})^2 \frac 1{2^j}=\frac{126549001}{38861550000}=0.00325641>V_4,\]
 which is a contradiction.

Case~3. $\frac 35<c_2$.

Since $c_1<\frac 15$, this is the reflection of Case~1 about the point $\frac 12$, thus a contradiction arises in this case as well.

Hence, we can conclude that $\ga\ii C\neq \es$. We now show that $\te{card}(\ga\ii C)=2$. For the sake of contradiction, assume that $\te{card}(\ga\ii C)=1$. Recall that $\ga\ii J_1\neq \es$, and $\ga\ii J_2\neq \es$. Thus, either $c_2\in C$, or $c_3\in C$. Without any loss of generality assume that $c_3\in C$. Then, $c_2<\frac 25$. The following cases can arise:

Case~I.  $\frac 7{20}\leq c_2<\frac 25$.

Then, $\frac 12(c_1+c_2)<\frac 15$ implying $c_1<\frac 1{20}$, and so
\[V_4\geq  \int_{S_1(A_3)\uu J_{12}}(x-\frac 1{20})^2 dP +\frac 13 V_2(\gn)+\frac 1{75}V=\frac{344484113}{93267720000}=0.0036935>V_4,\]
which leads to a contradiction.

Case~II. $\frac 3{10} \leq c_2\leq \frac 7{20}$.

Then, $\frac 12(c_1+c_2)<\frac 15$ implying $c_1<\frac 1{10}$. Moreover, $a_1<\frac 12(\frac 7{20}+b_2)<a_2$. First assume that $\frac 2{25}\leq c_1<\frac 1{10}$.
Then,
\begin{align*} V_4&\geq \int_{J_{11}}(x-\frac 2{25})^2 dP+\int_{S_{1}(A_3)\uu J_{12}}(x-\frac 1{10})^2 dP+\frac 13 (a_1-\frac 7{20})^2 \frac 12+\frac 13\sum_{j=2}^\infty (a_j-b_2)^2 \frac 1{2^j}+\frac 1{75} V\\
&=\frac{152465267}{46633860000}=0.00326941>V_4,
\end{align*}
which is a contradiction. Next, assume that $c_1\leq \frac 2{25}$.
Then, $S_{12}(\frac 35)<\frac 12(\frac 2{25}+\frac 3{10})<S_{122}(0)$, and so
\begin{align*}
V_4&\geq \int_{J_{11}}(x-S_{11}(\frac {19}{39}))^2 dP+\int_{S_1(C)}(x-\frac 2{25})^2 dP+\int_{J_{121}}(x-\frac 2{25})^2 dP+\int_{S_{12}(C)}(x-\frac 2{25})^2 dP\\
&+\int_{J_{122}}(x-\frac 3{10})^2 dP+\frac 13 (a_1-\frac 7{20})^2 \frac 12+\frac 13\sum_{j=2}^\infty (a_j-b_2)^2 \frac 1{2^j}+\frac 1{75} V\\
&=\frac{89570473}{27980316000}=0.0032012>V_4,
\end{align*}
which leads to a contradiction.

Case~III. $c_2\leq \frac 3{10}$.

Then, $a_1<\frac 12(\frac 3{10}+b_2)<a_2$, and so
\begin{align*}
V_4&\geq \frac 13 (a_1-\frac 3{10})^2 \frac 12+\frac 13\sum_{j=2}^\infty (a_j-b_2)^2 \frac 1{2^j}+\frac 1{75} V=\frac{104633}{31089240}=0.00336557>V_4,
\end{align*}
which is a contradiction.

Thus, $\te{card}(\ga\ii C)=1$ leads to a contradiction. Hence, we can assume that  $\te{card}(\ga\ii C)=2$. Thus, the proof of the lemma is complete.
\end{proof}

From Lemma~\ref{lemma00001}, the following proposition follows.

\begin{propo}  \label{prop000021} The set $\set{S_1(\frac{19}{39}), S_2(\frac{19}{39})}\uu \ga_2(\gn)$ is an optimal set of four-means with quantization error $V_4=\frac{185729}{58292325}=0.00318617$.
\end{propo}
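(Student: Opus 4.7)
The plan is to invoke Lemma~\ref{lemma00001} to fix the combinatorial structure of an optimal set of four-means, and then exploit the strong separation of $J_1$, $C$, and $J_2$ to decouple the problem into two one-point subproblems on $J_1$ and $J_2$ together with a two-point subproblem on $C$.

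Let $\ga=\{c_1, c_2, c_3, c_4\}$ with $c_1<c_2<c_3<c_4$ be an optimal set of four-means. By Lemma~\ref{lemma00001}, exactly one $c_i$ lies in $J_1=[0,1/5]$, exactly one lies in $J_2=[4/5,1]$, and the remaining two lie in the middle region containing $C$, which sits inside $[2/5,3/5]$. Ordering then forces $c_1\in J_1$, $c_2, c_3\in[2/5,3/5]$, and $c_4\in J_2$. Next I show that the Voronoi regions decouple across the gaps $(1/5,2/5)$ and $(3/5,4/5)$. Since $c_1\le 1/5$ and $c_2\ge 2/5$, the midpoint $\tfrac12(c_1+c_2)$ lies in $[1/5,2/5]$; hence every point of $\te{supp}(P)\ii[2/5,1]$ is closer to $c_2$ than to $c_1$, and every point of $\te{supp}(P)\ii[0,1/5]$ is closer to $c_1$ than to $c_2$. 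A symmetric argument handles $c_3$ and $c_4$. Thus, with respect to $\te{supp}(P)$, the Voronoi region of $c_1$ equals $J_1$, that of $c_4$ equals $J_2$, and the Voronoi regions of $c_2, c_3$ together cover $C$.

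Combining Proposition~\ref{prop1000}(iii) with Lemmas~\ref{lemma1} and~\ref{lemma3},
\[
c_1 = E(X: X\in J_1) = S_1\bigl(E(P)\bigr) = S_1(\tfrac{19}{39}), \qquad c_4 = S_2(\tfrac{19}{39}).
\]
Moreover, $\{c_2, c_3\}$ must minimize $\int_{C}\min_{c\in\{c_2,c_3\}}(x-c)^2\,dP(x)$ over pairs in $[2/5,3/5]$. Since \eqref{eq020} gives $P(D)=\tfrac13\gn(D)$ for every Borel $D\ci C$, this minimization reduces, up to the factor $\tfrac13$, to the two-means problem for $\gn$, giving $\{c_2, c_3\}=\ga_2(\gn)$.

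Finally, using~\eqref{eq4} the total quantization error splits as
\[
V_4 \;=\; \int_{J_1}(x-c_1)^2\,dP \;+\; \tfrac{1}{3}V_2(\gn) \;+\; \int_{J_2}(x-c_4)^2\,dP \;=\; \tfrac{V}{75} + \tfrac{V_2(\gn)}{3} + \tfrac{V}{75},
\]
and inserting the known values of $V$ and $V_2(\gn)$ yields $V_4=\tfrac{185729}{58292325}$, matching the distortion already computed for the candidate $\gb$ in the proof of Lemma~\ref{lemma00001}. The only nontrivial step is the decoupling claim in the second paragraph, but this follows immediately from the two separating gaps $(1/5,2/5)$ and $(3/5,4/5)$ between the three pieces $J_1, C, J_2$; everything else is then a routine computation via Lemma~\ref{lemma1}, Lemma~\ref{lemma3}, and~\eqref{eq4}.
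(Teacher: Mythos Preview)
Your proof is correct and follows essentially the same route as the paper, which simply states that the proposition follows from Lemma~\ref{lemma00001}; you have spelled out the decoupling argument and the identification of the four optimal points that the paper leaves implicit. One small point worth noting: in your decoupling step you only need strict inequality $\tfrac12(c_1+c_2)<\tfrac25$ to separate $J_1$ from $C$, and this is guaranteed because $c_2<c_3\le\tfrac35$ forces $c_2<\tfrac35$, hence $\tfrac12(c_1+c_2)<\tfrac12(\tfrac15+\tfrac35)=\tfrac25$; similarly on the right.
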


We now prove the following proposition.

\begin{propo}\label{prop00003}
Let $n\in \D N$ and $n\geq 3$, and let $\ga$ be an optimal set of $n$-means. Then, $\ga\ii J_1\neq \es$, $\ga\ii J_2\neq \es$, and $\ga\ii C\neq \es$.
\end{propo}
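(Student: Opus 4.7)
The plan is to prove the three conclusions $\ga\ii J_1\neq\es$, $\ga\ii J_2\neq\es$, and $\ga\ii C\neq\es$ separately. The small cases $n=3,4$ are disposed of by the explicit optimal sets of Propositions~\ref{prop00002} and~\ref{prop000021} (noting that the claim $\ga\ii C\neq\es$ becomes substantive only once $n\geq 4$, since the optimal three-means set is $\set{S_1(19/39),7/15,S_2(19/39)}$). For $n\geq 5$ we exploit the monotonicity $V_n\leq V_4=\tfrac{185729}{58292325}\approx 0.003186$ as the main quantitative tool.

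For $\ga\ii J_1\neq\es$, I argue by contradiction: if $\ga\ii J_1=\es$, then Proposition~\ref{prop1000} forces $\ga\ci[0,1]$, hence $\ga\ci(1/5,1]$, so $(x-1/5)^2\leq(x-a)^2$ for every $a\in\ga$ and every $x\in J_1$. Formula~\eqref{eq4} with $\go=1$ then yields $V_n\geq\int_{J_1}(x-\tfrac{1}{5})^2\,dP=\tfrac{2488}{498225}\approx 0.004994>V_4$, a contradiction. The symmetric claim $\ga\ii J_2\neq\es$ follows by the same argument applied to $J_2$, using $V_n\geq\int_{J_2}(x-\tfrac{4}{5})^2\,dP=\tfrac{6953}{1494675}\approx 0.004652>V_4$.

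The main work is to prove $\ga\ii C\neq\es$. Assume for contradiction $\ga\ii C=\es$. By the previous two parts $\ga$ has at least one point in each of $J_1$ and $J_2$. I would split by whether $\ga\ii(1/5,4/5)$ is empty. If $\ga\ii(1/5,4/5)=\es$, then $\ga\ci J_1\uu J_2$, and for each $x\in C$ one has $\min_{a\in\ga}(x-a)^2\geq\min((x-1/5)^2,(4/5-x)^2)$; summing this bound over the atoms $a_j=3/5-2^{-(j-1)}/5$ in closed form produces a value exceeding $V_4$, a contradiction. If $\ga\ii(1/5,4/5)\neq\es$, then each point $b$ of $\ga$ in this region lies in $(1/5,4/5)\setminus C$ and, by Proposition~\ref{prop1000}$(iii)$, is the conditional mean of a Voronoi region meeting the support of $P$ in at least two distinct atoms (otherwise $b$ would equal a single atom of $C$, contradicting $\ga\ii C=\es$). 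I then execute a replacement argument: form $\ga'=(\ga\setminus\set{a^*})\uu\set{a_1}$ for a judiciously chosen $a^*\in\ga$ and verify $D(\ga')<D(\ga)$, contradicting the optimality of $\ga$.

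The main obstacle is executing the replacement argument cleanly in the second subcase. The gain from inserting $a_1=2/5$ is easy to bound below: since $a_1\notin\ga$ is at positive distance from $\ga$, the atom $a_1$ alone (weighted $\tfrac{1}{3}\cdot\tfrac{1}{2}$ in $P$) contributes a strictly positive decrease in distortion. The delicate step is bounding the penalty incurred by deleting $a^*$; the natural candidate is the point of $\ga\ii(1/5,4/5)$ whose Voronoi region carries the least $P$-mass (or is nearest to a neighbour in $\ga$), after which the increase from absorbing its region into the neighbouring cells must be estimated. I anticipate that, following the scheme of Lemma~\ref{lemma00001} and using $V_n\leq V_4$ to keep the bookkeeping tractable, one can carry out this analysis uniformly for all $n\geq 5$, so the contradiction arises in every configuration.
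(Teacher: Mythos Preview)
Your treatment of $\ga\ii J_1\neq\es$ and $\ga\ii J_2\neq\es$ is correct and matches the paper's argument. Your Subcase~A for $\ga\ii C\neq\es$ (when $\ga\ii(1/5,4/5)=\es$) is also fine: the crude bound $\min_{a\in\ga}(x-a)^2\geq (1/5)^2$ for $x\in C$ already gives $V_n\geq \tfrac{1}{3}\cdot\tfrac{1}{25}=\tfrac{1}{75}>V_4$.

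The genuine gap is Subcase~B. Your replacement argument ``swap some $a^*$ for $a_1$ and verify the distortion drops'' is not a proof, only a hope, and as stated it cannot be made to work uniformly. The gain from inserting $a_1$ is $\tfrac{1}{6}\,d(a_1,\ga)^2$, which can be arbitrarily small: nothing prevents $\ga$ from containing a point at, say, $a_1+\varepsilon$ for tiny $\varepsilon$ (this point is not in $C$, so your hypothesis $\ga\ii C=\es$ is not violated). Meanwhile the penalty from deleting $a^*$ is governed by the $P$-mass and diameter of its Voronoi cell, quantities you have no handle on. So the inequality $D(\ga')<D(\ga)$ simply does not follow from the information available, and the appeal to ``the scheme of Lemma~\ref{lemma00001}'' is misplaced: that lemma never uses a swap argument, it proceeds by direct lower bounds on the distortion.

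The paper's route is quite different from yours and avoids this difficulty entirely. For $n\geq 5$ it first tightens the upper bound to $V_n\leq V_5\leq 0.00205848$ (exhibiting an explicit five-point set), then assumes there are consecutive elements $c_j<\tfrac{2}{5}$ and $c_{j+1}>\tfrac{3}{5}$ and performs a case analysis on where $c_j$ sits in $(\tfrac{1}{5},\tfrac{2}{5})$ and $c_{j+1}$ in $(\tfrac{3}{5},\tfrac{4}{5})$. In each case it extracts an explicit numerical lower bound on $V_n$ (using contributions from $S_1(A_3)\cup J_{12}$, from the atoms $a_j$, and from $J_{21}\cup S_2(\{a_1,a_2\})$) that exceeds $0.00206$. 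Note that these lower bounds (e.g.\ $0.00244$ in one case) do \emph{not} exceed $V_4\approx 0.00319$, so your choice of $V_4$ as the comparison value would be too coarse even if you adopted the paper's case split; the sharper $V_5$ bound is essential.
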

\begin{proof} By Proposition~\ref{prop00002} and Lemma~\ref{lemma00001}, the proposition is true for $n=3, 4$. We now prove that the proposition is true for $n\geq 5$. Consider the set of five points $\gb:=S_1(\ga_2)\uu \ga_2(\gn)\uu \set{S_2(\frac {19}{39})}$. The distortion error due to the set $\gb$ is given by \[\int\min_{c\in\gb}(x-c)^2 dP=\frac 1{75}(V_{2,1}+V_{2,2})+\frac 13 V_2(\gn)+\frac1{75}V=\frac{6666323}{3238462500}=0.00205848.\]
Since $V_n$ is the quantization error for $n$-means with $n\geq 5$, we have $V_n\leq V_5\leq 0.00205848$. Let $\ga:=\set{c_1<c_2<\cdots<c_n}$ be an optimal set of $n$-means for $P$. In the similar way as shown in Lemma~\ref{lemma00001}, we can show that $\ga\ii J_1\neq \es$ and $\ga\ii J_2\neq \es$. For the sake of contradiction, assume that $\ga\ii C=\es$. Let $j:=\max\set{i : c_i<\frac 25}$. Then, $c_j<\frac 25$ and $c_{j+1}>\frac 35$. The following cases can arise:

Case~1. $\frac 3{10}<c_j<\frac 25$ and $\frac 35<c_{j+1}<\frac 7{10}$.

Then, $\frac 1 2(c_{j-1}+c_j)<\frac 15$ and $\frac 12(c_{j+1}+c_{j+2})>\frac 45$ yielding $c_{j-1}<\frac 1{10}$ and $c_{j+1}>\frac 9{10}$. Thus,
\begin{align*} V_n&\geq \int_{S_1(A_3)\uu J_{12}}(x-\frac 1{10})^2 dP+\frac 13 \sum_{j=1}^2 (a_j-\frac 25)^2 \frac 1{2^j}+\frac 13 \sum_{j=3}^\infty (a_j-\frac 35)^2 \frac 1{2^j}\\
&+ \int_{J_{21}\uu S_1(\set{a_1, a_2})}(x-\frac {9}{10})^2 dP=\frac{1123}{459900}=0.00244184>V_n,
\end{align*}
which gives a contradiction.

Case~2.  $\frac 3{10}<c_j<\frac 25$ and $\frac 7{10}\leq c_{j+1}$.
 Then, $c_{j-1}<\frac 1{10}$, and $a_2<\frac 12(\frac 25+\frac {7}{10})=a_3$, and so
\begin{align*} V_n&\geq \int_{S_1(A_3)\uu J_{12}}(x-\frac 1{10})^2 dP+\frac 13 \sum_{j=1}^2 (a_j-\frac 25)^2 \frac 1{2^j}+\frac 13 \sum_{j=3}^\infty (a_j-\frac{7}{10})^2 \frac 1{2^j}\\
&=\frac{1834513}{597870000}=0.00306841>V_n,
\end{align*}
which leads to a contradiction.

Case~3.  $c_j\leq \frac 3{10}$ and $\frac 7{10}\leq c_{j+1}$.

Then, as
  $\frac 12(\frac 3{10}+\frac {7}{10})=\frac 12$, we have
 \[V_n\geq \frac 13 (a_1-\frac 3{10})^2 \frac 1{2}+\frac 13 \sum_{j=2}^\infty (a_j-\frac {7}{10})^2 \frac 1{2^j}=\frac{41}{6300}=0.00650794>V_n,\]
 which is a contradiction.

Thus, we see that $\ga\ii C=\es$ leads to a contradiction. Hence, we can assume that $\ga\ii C\neq \es$. Thus, the proof of the proposition is complete.
\end{proof}

\begin{lemma1}  \label{lemma00002} Let $\ga$ be an optimal set of five-means. Then, $\ga$ does not contain any point from the open intervals $(\frac 15, \frac 25)$ and $(\frac 35, \frac 45)$.
\end{lemma1}

\begin{proof}
Let $\ga:=\set{c_1<c_2<c_3<c_4<c_5}$ be an optimal set of five means. As shown in the proof of Proposition~\ref{prop00003}, we have $V_5\leq 0.00205848$. First, prove the following claim.

Claim. $\te{card}(\ga\ii C)=2$.

Since $\ga\ii J_1\neq \es$ and $\ga\ii J_2\neq \es$, we have $\te{card}(\ga\ii C)\leq 3$. If $\te{card}(\ga\ii C)=3$, then
\[V_5\geq \int_{J_1}(x-S_1(\frac {19}{39}))^2 dP+\int_{J_2}(x-S_2(\frac{19}{39}))^2 dP=\frac 2{75} V=\frac{173392}{58292325}=0.00297453>V_5,\]
which is a contradiction. So, we can assume that $\te{card}(\ga\ii C)\leq 2$. Suppose that $\te{card}(\ga\ii C)=1$. Then, the following cases can arise:

Case~1. $c_4\in C$.

Then, $c_3<\frac 25$. Assume that $\frac 3{10}<c_3<\frac 25$. Then, $\frac 12(c_2+c_3)<\frac 15$ implying $c_2<\frac 1{10}$, and so
\[V_5\geq \int_{S_1(A_3)\uu J_{12}}(x-\frac 1{10})^2 dP+\int_{J_2}(x-S_2(\frac{19}{39}))^2 dP=\frac{25816591}{11658465000}=0.00221441>V_5,\]
which gives a contradiction. Next, assume that $c_3\leq \frac 3{10}$. Then, $a_1<\frac 1 2(\frac 3{10}+b_2)<a_2$, and so
\[V_5\geq \frac 13(a_1-\frac 3{10})^2 \frac 12+\frac 13\sum_{j=2}^\infty(a_j-b_2)^2 \frac 1{2^j}+\int_{J_2}(x-S_2(\frac{19}{39}))^2 dP=\frac{104633}{31089240}=0.00336557>V_5,\]
which is a contradiction. Thus, $c_4\in C$ leads to a contradiction.

Case~2. $c_3\in C$.

Then, $c_2<\frac 25$ and $\frac 35<c_4$. First, assume that $\frac 13<c_2<\frac 25$ and $\frac 35<c_4<\frac 23$. Then, $\frac 12(c_1+c_2)<\frac 15$ and $\frac 12(c_4+c_5)>\frac 45$ implying $c_1<\frac1{15}$ and $\frac {14}{15}<c_5$. Thus,
\begin{align*} V_5&\geq \int_{S_1(A_3)\uu J_{12}}(x-\frac 1{15})^2 dP+\int_{J_{21}\uu S_1(\set{a_1, a_2})}(x-\frac {14}{15})^2 dP=\frac{32531}{10347750}=0.00314378>V_5,
\end{align*}
which is a contradiction. Next, assume that $\frac 3{10} \leq c_2\leq \frac 13$ and $\frac 35<c_4<\frac 23$. Then, $\frac 12(c_1+c_2)<\frac 15$ and $\frac 12(c_4+c_5)>\frac 45$ implying $c_1<\frac1{10}$ and $\frac {14}{15}<c_5$. Moreover, $\frac 12(c_2+c_3)\geq \frac 25$ yielding $c_3\geq \frac 45-c_2\geq \frac 45-\frac 13=\frac 7{15}$, and so,
\begin{align*} V_5&\geq \int_{S_1(A_3)\uu J_{12}}(x-\frac 1{10})^2 dP+\frac 13(a_1-\frac 7{15})^2 \frac 12+\int_{J_{21}\uu S_1(\set{a_1, a_2})}(x-\frac {14}{15})^2 dP\\
&=\frac{3354097}{1076166000}=0.00311671>V_5,
\end{align*}
which leads to a contradiction. Now, assume that $\frac 3{10} \leq c_2\leq \frac 13$ and $\frac 23\leq c_4\leq\frac 7{10}$. Then, $c_1<\frac 1{10}$ and $c_5>\frac 9{10}$. Moreover, $\frac 12(c_2+c_3)\geq \frac 25$ and $\frac 12(c_3+c_4)\leq \frac 35$ implying $a_1<\frac 7{15}\leq c_3\leq \frac 8{15}<a_3$. Thus, we have
\begin{align*} V_5& \geq \int_{S_1(A_3)\uu J_{12}}(x-\frac 1{10})^2 dP+\frac 13\Big((a_1-\frac 7{15})^2 \frac 12+ \sum_{j=3}^\infty (a_j-\frac 8{15})^2 \frac 1{2^j}\Big)\\
& +\int_{J_{21}\uu S_1(\set{a_1, a_2})}(x-\frac {9}{10})^2 dP=\frac{2593}{1103760}=0.00234924>V_5,
\end{align*}
which is a contradiction. Last, assume that $c_2\leq \frac 3{10}$ and $\frac 7{10}\leq c_4$. Then, as $\frac 12(\frac 3{10}+\frac 7{10})=\frac 12$, we have
\[V_5\geq \frac 13\Big((a_1-\frac 3{10} )^2 \frac 12+ \sum_{j=3}^\infty (a_j-\frac 7{10})^2 \frac 1{2^j}\Big)=\frac{1}{315}=0.0031746>V_5,\]
which gives a contradiction.

Case~3. $c_2\in C$.

This case is similar to Case~1, and similarly can be shown that a contradiction arises.

Thus, considering all the above possible cases, we can say that $\te{card}(\ga\ii C)=1$ leads to a contradiction. Hence, we can assume that $\te{card}(\ga\ii C)=2$, i.e., the claim is true.

We now show that $\ga$ does not contain any point from $(\frac 15, \frac 25)\uu (\frac 25, \frac 35)$. Recall that $\ga\ii J_1\neq \es$, $\ga\ii J_2\neq \es$, and $\te{card}(\ga\ii C)=2$. Suppose that $\ga$ contains a point from $(\frac 15, \frac 25)$. Then, $\ga$ contains $c_2$ from $(\frac 15, \frac 25)$. First assume that $\frac 3{10}< c_2<\frac 25$. Then, $\frac 12(c_1+c_2)<\frac 15$ implying $c_1<\frac 1{10}$, and so
\[V_5\geq \int_{S_1(A_3)\uu J_{12}}(x-\frac 1{10})^2 dP+\int_{J_2}(x-S_2(\frac{19}{39}))^2 dP=\frac{25816591}{11658465000}=0.00221441>V_5,\]
which is a contradiction. Next, assume that $c_2\leq \frac 3{10}$. Then, $\frac 12(c_2+c_3)\geq \frac 2 5$ implying $c_3\geq \frac 45-c_2\geq \frac 45-\frac 3{10}=\frac 12=a_2$, and so
\[V_5\geq \frac 13(a_1-\frac 12)^2 \frac 12+\int_{J_2}(x-S_2(\frac{19}{39}))^2 dP=\frac{1470799}{466338600}=0.00315393>V_5,\]
which is a contradiction. Hence, $\ga$ cannot contain any point from $(\frac 15, \frac 25)$. Suppose that $\ga$ contains a point from $(\frac 35, \frac 45)$. Then, $\ga$ contains $c_4$ from $(\frac 35, \frac 4 5)$. First, assume that $\frac 35<c_4<\frac 7{10}$. Then, $\frac 12(c_4+c_5)>\frac 45$ implying $c_5>\frac 9{10}$, and so
\[V_5\geq  \int_{J_1}(x-S_1(\frac {19}{39}))^2 dP+ \int_{J_{21}\uu S_1(\set{a_1, a_2})}(x-\frac {9}{10})^2dP=\frac{26226559}{11658465000}=0.00224957>V_5,\]
which gives a contradiction. Next, assume that $\frac 7{10}\leq c_4$. Then, $\frac 12(c_3+c_4)\leq \frac 35$ implying $c_3\leq \frac 65-c_4\leq \frac 65-\frac 7{10}=\frac 12$, and so
\begin{align*}
V_5& \geq  \int_{J_1}(x-S_1(\frac {19}{39}))^2 dP+ \frac 13 \sum_{j=3}^\infty (a_j-\frac 12)^2 \frac 1{2^j}+\int_{J_2}\min_{c\in S_2(\ga_2)}(x-c)^2 dP\\
&=\frac 1{75} V+ \frac 13 \sum_{j=3}^\infty (a_j-\frac 12)^2 \frac 1{2^j}+\frac 1{75}(V_{2,1}+V_{2,2})=\frac{130788689}{58292325000}=0.00224367>V_5,
\end{align*}
which is a contradiction. Hence, $\ga$ cannot contain any point from $(\frac 35, \frac 45)$. Thus, the proof of the lemma is complete.
\end{proof}

\begin{lemma1}  \label{lemma00003} Let $\ga$ be an optimal set of six-means. Then, $\ga$ does not contain any point from the open intervals $(\frac 15, \frac 25)$ and $(\frac 35, \frac 45)$.
\end{lemma1}

\begin{proof}
Consider the set of six points $\gb:=S_1(\ga_2)\uu \ga_2(\gn)\uu S_2(\ga_2)$. The distortion error due to set $\gb$ is given by
\[\int\min_{c\in \gb} (x-c)^2 dP=\frac 2{75} (V_{2,1}+V_{2,2})+\frac 13 V_2(\gn)=\frac{13564657}{14573081250}=0.000930802.\]
Since $V_6$ is the quantization error for six-means, we have $V_6\leq 0.000930802$. Let $\ga:=\set{c_1<c_2<\cdots<c_6}$ be an optimal set of six-means.

First, prove the following claim.

Claim. $\te{card}(\ga\ii C)=2$.

Since $\ga\ii J_1\neq \es$ and $\ga\ii J_2\neq \es$, we have $\te{card}(\ga\ii C)\leq 4$. If $\te{card}(\ga\ii C)=4$, then
\[V_6\geq \int_{J_1}(x-S_1(\frac {19}{39}))^2 dP+\int_{J_2}(x-S_2(\frac{19}{39}))^2 dP=\frac 2{75} V=\frac{173392}{58292325}=0.00297453>V_6,\]
which is a contradiction. Assume that $\te{card}(\ga\ii C)=3$. Recall that $\ga\ii J_1\neq \es$, and $\ga\ii J_2\neq \es$. First, assume that $\ga$ does not contain any point from the open intervals $(\frac 15, \frac 25)$ and $(\frac 35, \frac 45)$. Then, either $\ga$ contains two points from $J_1$ and one point from $J_2$, or $\ga$ contains one point from $J_1$ and two points from $J_2$. In any case, we have the distortion error as
\[\frac 1{75}(V_{2,1}+V_{2,2})+\frac 13 V_3(\gn)+\frac 1{75}V=\frac{109198939}{58292325000}=0.0018733>V_6,\]
which is a contradiction. Next, assume that $\ga$ contains a point from the set $(\frac 15, \frac 25)\uu(\frac 35, \frac 45)$. First assume that $\ga$ contains the point from the open interval $(\frac 15, \frac 25)$. Then, $c_2\in (\frac 15, \frac 25)$. The following two cases can arise:

Case~1. $\frac 3{10}\leq c_2<\frac 25$.

Then, $\frac 12(c_1+c_2)<\frac 15$ implying $c_1<\frac 1{10}$, and so
\[V_6\geq \int_{S_1(A_3)\uu J_{12}}(x-\frac 1{10})^2 dP+\int_{J_2}(x-S_2(\frac{19}{39}))^2 dP=\frac{25816591}{11658465000}=0.00221441>V_6,\]
which is a contradiction.

Case~2. $\frac 15 <c_2\leq \frac 3{10}$.

Then, $\frac 12(c_2+c_3)>\frac 25$ implying $c_4>\frac 45-c_2\geq \frac 45-\frac 3{10}=\frac 12$, and so
\[V_6\geq \frac 13(a_1-\frac 12)^2 \frac 12+\int_{J_2}(x-S_2(\frac{19}{39}))^2 dP=\frac{1470799}{466338600}=0.00315393>V_6,\]
which gives a contradiction.

Similarly, we can show that if $\ga$ contains the point from the open interval $(\frac 35, \frac 45)$, a contradiction arises. Hence, we can assume that  $\te{card}(\ga\ii C)\leq 2$. Suppose that $\te{card}(\ga\ii C)=1$. Then, the following cases can arise:

Case~A. $\ga$ does not contain any point from the set $(\frac 15, \frac 25)\uu (\frac 35, \frac 45)$.

Then, $V_6\geq \frac 13 V_1(\gn)=\frac 13 V(\gn)=\frac{8}{4725}=0.00169312>V_6$, which leads to a contradiction.

Case~B. $\ga$ contains only one point from the set $(\frac 15, \frac 25)\uu (\frac 35, \frac 45)$.

In this case the following subcases can arise:

Subcase~(i).  $c_1, c_2, c_3 \in J_1$.

Then, $c_4 \in (\frac 15, \frac 25)$, $c_5\in C$, and $c_6\in J_2$; or $c_4\in C$, $c_5\in (\frac 35, \frac 45)$, and $c_6\in J_2$. First, assume that $c_4 \in (\frac 15, \frac 25)$, $c_5\in C$, and $c_6\in J_2$. Suppose that $\frac 3{10}<c_4<\frac 25$. Then, $\frac 12(c_3+c_4)<\frac 15$ implying $c_3<\frac 25-c_4\leq \frac 1{10}$, and hence
\[V_6\geq \int_{S_1(A_3)\uu J_{12}}(x-\frac 1{10})^2 dP+\int_{J_2}(x-S_2(\frac{19}{39}))^2 dP=\frac{25816591}{11658465000}=0.00221441>V_6,\]
which is a contradiction. Suppose that $\frac 15<c_4\leq \frac3{10}$. Then, $\frac 12(c_4+c_5)\geq \frac 25$ implying $c_5\geq \frac 12$, and so
\[V_6\geq \frac 13 (a_1-\frac 12)^2 \frac 12+\int_{J_2}(x-S_2(\frac{19}{39}))^2 dP=\frac{1470799}{466338600}=0.00315393>V_6,\]
which yields a contradiction. Similarly, we can show that if $c_4\in C$, $c_5\in (\frac 35, \frac 45)$, and $c_6\in J_2$, a contradiction arises.

Subcase~(ii).  $c_1, c_2,\in J_1$.

Then, $c_3\in (\frac 15, \frac 25)$, $c_4\in C$, $c_5, c_6\in J_2$; or $c_3\in C$, $c_4\in (\frac 35, \frac 45)$, and $c_5, c_6\in J_2$. First, assume that $c_3\in (\frac 15, \frac 25)$, $c_4\in C$, $c_5, c_6\in J_2$. Suppose that $\frac 3{10}<c_3<\frac 25$. Then, $\frac 12(c_2+c_3)<\frac 15$ implying $c_2<\frac 25-c_3\leq \frac 1{10}$, and hence
\[V_6\geq \int_{S_1(A_3)\uu J_{12}}(x-\frac 1{10})^2 dP+\int_{J_2}\min_{c\in S_2(\ga_2)}(x-c)^2 dP=\frac{7038641}{6476925000}=0.00108673>V_6,\]
which is a contradiction. Suppose that $\frac 15<c_3\leq \frac3{10}$. Then, $\frac 12(c_3+c_4)\geq \frac 25$ implying $c_4\geq \frac 12$, and so
\[V_6\geq \frac 13 (a_1-\frac 12)^2 \frac 12+\int_{J_2}\min_{c\in S_2(\ga_2)}(x-c)^2 dP=\frac{5624509}{2775825000}=0.00202625>V_6,\]
which yields a contradiction. Similarly, we can show that if $c_3\in C$, $c_4\in (\frac 35, \frac 45)$, and $c_5, c_6\in J_2$, a contradiction arises.

Subcase~(iii).  $c_1 \in J_1$.

This case is the reflection of Subcase~(i) about the point $\frac 12$, and thus a contradiction also arises in this case.

Case~C. $\ga$ contains two points from the set $(\frac 15, \frac 25)\uu (\frac 35, \frac 45)$.

By Proposition~\ref{prop1000}, $\ga$ contains one point from $(\frac 15, \frac 25)$ and one point from $(\frac 35, \frac 45)$. Without any loss of generality, we can assume that $c_1, c_2\in J_1$, $c_3\in (\frac 15, \frac 25)$, $c_4\in C$, $c_5\in (\frac 35, \frac 45)$, and $c_6\in J_6$. Then, we can also show that it leads to a contradiction.

Thus, we can assume that $\te{card}(\ga\ii C)=2$, which is the claim.

We now show that $\ga$ does not contain any point from $(\frac 15, \frac 25)\uu (\frac 35, \frac 45)$. For the sake of contradiction, first assume that $\ga$ contains a point from $(\frac 15, \frac 25)$, and does not contain any point from $(\frac 35, \frac 45)$. Suppose that $c_1, c_2 \in J_1$, $c_3\in (\frac 15, \frac 25)$, $c_4, c_5\in C$, and $c_6\in J_2$, and then
\[V_6\geq \int_{J_2}(x-S_2(\frac {19}{39}))^2 dP=\frac{86696}{58292325}=0.00148726>V_6,\]
which is a contradiction. Suppose that $c_1\in J_1$, $c_2\in (\frac 15, \frac 25)$, $c_3, c_4\in C$, and $c_5, c_6\in J_2$. Then, either $\frac 3{10} \leq c_2<\frac 25$, or $\frac 15<c_2\leq \frac 3{10}$. First, assume that $\frac 3{10} \leq c_2<\frac 25$. Then, $\frac 12(c_{1}+c_2)<\frac 15$ implies $c_1\leq \frac 1{10}$, and so
\[V_6\geq  \int_{S_1(A_3)\uu J_{12}}(x-\frac 1{10})^2 dP  + \int_{J_2}\min_{c\in S_2(\ga_2)}(x-c)^2 dP=\frac{7038641}{6476925000}=0.00108673>V_6,\]
which leads to a contradiction. Next, assume that $\frac 15<c_2\leq \frac 3{10}$. Then, $\frac 12(c_2+c_3)>\frac 25$ implying $c_3>\frac 45-c_2\geq \frac 45-\frac 3{10}=\frac 12>a_1$, and so
\[V_6\geq \frac 13(a_1-\frac 13)^2 \frac 1{2}+ \int_{J_2}\min_{c\in S_2(\ga_2)}(x-c)^2 dP=\frac{5624509}{2775825000}=0.00202625>V_6,\]
which is a contradiction.
Likewise, we can show that if $\ga$ contains a point from $(\frac 35, \frac 45)$, and does not contain any point from $(\frac 15, \frac 25)$, a contradiction arises.

Next, assume that $\ga$ contains a point from $(\frac 15, \frac 25)$ and a point from $(\frac 35, \frac 45)$. Then, we have $c_1\in J_1$, $c_2\in (\frac 15, \frac 25)$, $c_3, c_4\in C$, $c_5\in (\frac 35, \frac 45)$, and $c_6\in J_2$.
Then, the following cases can arise:

Case~I. $\frac 3{10} \leq c_2<\frac 25$ and $\frac 3 5 <c_5\leq \frac 7{10}$.

Then, $\frac 12(c_{1}+c_2)<\frac 15$ and $\frac 12(c_5+c_6)>\frac 45$ implying $c_1<\frac 1{10}$ and $c_6>\frac 9{10}$. Thus,

\[V_6\geq  \int_{S_1(A_3)\uu J_{12}}(x-\frac 1{10})^2 dP  + \int_{J_{21}\uu S_2(\set{a_1, a_2})}(x-\frac 9{10})^2 dP=\frac{137}{91980}=0.00148945>V_6,\]
which leads to a contradiction.

Case~II. $\frac 15 <c_2\leq \frac 3{10}$ and  $\frac 3 5 <c_5\leq \frac 7{10}$.

Then, $\frac 12(c_2+c_3)>\frac 25$ implying $c_3>\frac 45-c_2\geq \frac 45-\frac 3{10}=\frac 12>a_1$, and $c_6>\frac 9{10}$. Thus,
\[V_6\geq \frac 13(a_1-\frac 13)^2 \frac 1{2}+ \int_{J_{21}\uu S_2(\set{a_1, a_2})}(x-\frac 9{10})^2 dP=\frac{363053}{149467500}=0.00242898>V_6,\]
which is a contradiction.

Case~III. $\frac 15 <c_2\leq \frac 3{10}$ and $\frac 7{10} \leq c_5<\frac 45$.

Then, $\frac 12(c_2+c_3)\geq \frac 25$ and $\frac 12(c_4+c_5)\leq \frac 35$ implying $c_3\geq \frac 12$ and $c_4\leq \frac 12$, which leads to a contradiction because $c_3<c_4$.

Thus, we can conclude that $\ga$ does not contain any point from the open intervals $(\frac 15, \frac 25)$ and $(\frac 35, \frac 45)$, which is the lemma.
\end{proof}

\begin{propo}  \label{prop00004} Let $\ga_n$ be an optimal set of $n$-means for all $n\geq 3$. Then, $\ga_n$ does not contain any point from the open intervals $(\frac 15, \frac 25)$ and $(\frac 35, \frac 45)$.
\end{propo}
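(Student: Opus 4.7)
My plan is to prove the proposition by strong induction on $n \geq 3$. The base cases $n = 3$ and $n = 4$ follow from the explicit optimal sets displayed in Proposition~\ref{prop00002} and Proposition~\ref{prop000021}, in each of which every point of the optimal set lies in $J_1 \cup C \cup J_2$. The cases $n = 5$ and $n = 6$ are Lemma~\ref{lemma00002} and Lemma~\ref{lemma00003}. Fix $n \geq 7$, assume the result for all $3 \leq k < n$, and suppose toward a contradiction that $\ga_n$ contains a point $c \in (\frac 15, \frac 25) \cup (\frac 35, \frac 45)$. I will treat $c \in (\frac 15, \frac 25)$ in detail; the case $c \in (\frac 35, \frac 45)$ is handled by a structurally identical argument using the gap between $C$ and $J_2$.

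The key tool is the centroid condition of Proposition~\ref{prop1000}(iii): $c = E(X : X \in M(c | \ga_n))$. Since the support $K$ of $P$ lies in $J_1 \cup C \cup J_2$, we have $K \cap (\frac 15, \frac 25) = \es$, so the cell $M(c | \ga_n)$ must carry positive $P$-mass from both $J_1$ and $C$; otherwise the centroid would lie in $[0, \frac 15]$ or in $[\frac 25, 1]$, excluding $c$. Writing $c_L, c_R$ for the left and right neighbors of $c$ in $\ga_n$, this forces $\frac{c_L + c}{2} < \frac 15$ and $\frac{c + c_R}{2} > \frac 25$. Splitting on $c \geq \frac{3}{10}$ versus $c \leq \frac{3}{10}$, as in Lemma~\ref{lemma00003}, yields respectively $c_L < \frac{1}{10}$ or $c_R > \frac 12$.

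Using these constraints, I will bound $V_n$ from below by summing the contribution of the cell of $c_L$ (which, when $c_L < \frac{1}{10}$ and $c \geq \frac{3}{10}$, contains all of $S_1(A_3) \cup J_{12} \subset [\frac{11}{100}, \frac 15]$, giving at least $\int_{S_1(A_3) \cup J_{12}} (x - \frac{1}{10})^2 \, dP$) together with contributions from cells covering $J_2$. For the upper bound, I invoke Lemma~\ref{lemma001} and the inductive hypothesis to construct competitors of the form $\gb := S_1(\gb_1) \cup \gb_2 \cup S_2(\gb_3)$, where $\gb_1, \gb_3$ are optimal $k_1$- and $k_3$-means for $P$ (which, by induction, lie in $J_1 \cup C \cup J_2$) and $\gb_2$ is an optimal $k_2$-means for $\gn$, with $k_1 + k_2 + k_3 = n$. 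Lemma~\ref{lemma1} then yields
\[
V_n \leq \frac{1}{75}(V_{k_1} + V_{k_3}) + \frac{1}{3} V_{k_2}(\gn),
\]
and optimizing over $(k_1, k_2, k_3)$ produces an upper bound strictly below the lower bound for $n \geq 7$, the desired contradiction.

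The main obstacle is closing the gap between the upper and lower bounds uniformly across all $n \geq 7$. For large $n$, the rapid decay of $V_k(\gn)$ (since $D(\gn) = 0$) makes $\frac 13 V_{k_2}(\gn)$ negligible, so the upper bound approaches $\frac{2}{75}V_2$, which is strictly below the lower bound $\int_{S_1(A_3) \cup J_{12}}(x - \frac{1}{10})^2 \, dP$ computed in Lemma~\ref{lemma00003}. For moderate $n$ just above the base cases, one has to strengthen the lower bound by incorporating the contribution from the cell of $c$ itself (where $c$ is far from both the $J_1$-mass and the $C$-mass lying in its cell) or tailor the competitor $\gb$ to the specific $n$. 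This numerical bookkeeping, verifying that the gap remains favorable for every $n \geq 7$, is the most delicate part of the proof.
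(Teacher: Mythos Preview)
Your overall strategy---split on whether $c\geq\frac{3}{10}$ or $c\leq\frac{3}{10}$, use the centroid condition to constrain the neighbors of $c$, and then compare lower bounds against competitor-based upper bounds---is exactly the paper's. However, the induction framing is spurious: the competitor estimate $V_n\leq\frac{1}{75}(V_{k_1}+V_{k_3})+\frac13 V_{k_2}(\gn)$ holds for \emph{any} optimal sets $\gb_1,\gb_3$ for $P$ via Lemma~\ref{lemma1} and Lemma~\ref{lemma001}, regardless of whether those sets avoid the forbidden intervals. So the inductive hypothesis is never invoked. The paper accordingly argues directly: it treats $n=7$ by the same case analysis as Lemma~\ref{lemma00003}, and for every $n\geq 8$ it uses the single competitor $S_1(\ga_3)\cup\ga_2(\gn)\cup S_2(\ga_3)$ to get $V_n\leq V_8\leq 0.000337$, after which all four case-bounds ($\approx 0.000727,\ 0.00167,\ 0.000762,\ 0.000397$) exceed this fixed constant. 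This collapses what you flagged as ``the most delicate part of the proof'' into one numerical comparison, rather than an $n$-dependent optimization over $(k_1,k_2,k_3)$ that you have not carried out.

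One correction to your lower-bound argument: it is not true that the Voronoi cell of $c_L$ contains all of $S_1(A_3)\cup J_{12}$, since the boundary $\frac{c_L+c}{2}$ can fall inside $[\frac{11}{100},\frac15]$. What is true (and what the paper uses) is that for every $x\in S_1(A_3)\cup J_{12}\subset[\frac{11}{100},\frac15]$, both $|x-c_L|>x-\frac1{10}$ (since $c_L<\frac1{10}$) and $|x-c|\geq\frac3{10}-\frac15=\frac1{10}\geq x-\frac1{10}$, so $\min_{a\in\ga_n}(x-a)^2\geq(x-\frac1{10})^2$ irrespective of which cell $x$ occupies.
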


\begin{proof}
By Proposition~\ref{prop00002}, Lemma~\ref{lemma00001}, Lemma~\ref{lemma00002}, and Lemma~\ref{lemma00003}, the proposition is true for $n=3, 4, 5, 6$. Proceeding in the similar way as Lemma~\ref{lemma00003}, we can show that the proposition is true for $n=7$. We now show that the proposition is true for all $n\geq 8$. Consider the set of eight points $\gb:=S_1(\ga_3)\uu \ga_2(\gn)\uu S_2(\ga_3)$. The distortion error due to set $\gb$ is given by
\begin{align*}
& \int\min_{c\in \gb} (x-c)^2 dP=\frac 1{75^2}V+\frac 13 \frac 1{75}V(\gn)+ \frac 1{75^2}V +\frac 13V_2(\gn)+\frac 1{75^2}V+\frac 13 \frac 1{75}V(\gn)+ \frac 1{75^2}V \\
&=\frac{489817}{1457308125}=0.000336111.
\end{align*}
Since $V_n$ is the quantization error for $n$-means with $n\geq 8$, we have $V_n\leq V_8\leq 0.000633457$. Let $\ga:=\set{c_1<c_2<\cdots<c_n}$ be an optimal set of $n$-means for $P$. Suppose that $\ga$ contains a point from the open interval $(\frac 15, \frac 25)$. Let $j:=\max\set{i : c_i<\frac 25}$.
Then, $\frac 15<c_j<\frac 25$. The following cases can arise:

Case~1. $\frac 3{10} <c_j<\frac 25$.

Then, $\frac 12(c_{j-1}+c_j)<\frac 15$ implying $c_{j-1}<\frac 25-\frac 3{10}=\frac 1{10}$, and so
\[V_n\geq  \int_{S_1(A_3)\uu J_{12}}(x-\frac 1{10})^2 dP =\frac{217369}{298935000}=0.000727145>V_n,\]
which leads to a contradiction.

Case~2. $\frac 15 \leq c_j\leq \frac 3{10}$.

Then, $\frac 12(c_j+c_{j+1})>\frac 25$ implying $c_{j+1}>\frac 45-c_j\geq \frac 45-\frac 3{10}=\frac 12$, and so
\[V_n\geq \frac 13(a_1-\frac 12)^2 \frac 12=\frac{1}{600}=0.00166667>V_n,\]
which gives a contradiction.

Hence, we can say that $\ga$ does not contain any point from $(\frac 25, \frac 35)$. Suppose that $\ga$ contains a point from $(\frac 35, \frac 45)$. Let $j:=\min\set{i : \frac 3 5<c_i}$. Then, $\frac 35<c_j<\frac 45$. The following cases can arise:

Case~I. $\frac 3{5} <c_j<\frac 7{10}$.

Then, $\frac 12(c_{j}+c_{j+1})>\frac 45$ implying $c_{j+1}>\frac 85-\frac 7{10}=\frac 9{10}$, and so
\[V_n\geq  \int_{J_{21}\uu S_2(\set{a_1, a_2})}(x-\frac 9{10})^2 dP =\frac{227881}{298935000}=0.00076231>V_n,\]
which leads to a contradiction.

Case~II. $\frac 7{10} \leq c_j<\frac 45$.

Then, $\frac 12(c_{j-1}+c_{j})<\frac 3 5$ implying $c_{j-1}<\frac 65-c_j\leq \frac 6 5-\frac 7{10}=\frac 12<a_2$, and so
\[V_n\geq \frac 13\sum_{j=2}^\infty(a_j-\frac 12)^2\frac 1{2^j}=\frac{1}{2520}=0.000396825>V_n,\]
which is a contradiction.

Hence $\ga$ does not contain any point from the open interval $(\frac 35, \frac 45)$. This completes the proof of the proposition.
\end{proof}

\subsection{Optimal sets and the quantization error for a given sequence $F(n)$}
In this subsection we first define the two sequences $\set{a(n)}_{n\geq 1}$ and  $\set{F(n)}_{n\geq 1}$. These two sequences play important role in this section.
\begin{defi} \label{defi031}  Define the sequence $\set{a(n)}_{n\geq 1}$ such that $a(n)=2n$ for all $n\geq 1$.
Define the sequence $\set{F(n)}_{n\geq 1}$ such that $F(n)=5\cdot 2^n-2n-4$, i.e.,
\[\set{F(n)}_{n\geq 1}=\{4,12,30,68,146,304,622,1260,2538,5096,10214,20452,40930, \cdots\}.\]
\end{defi}

\begin{note1} \label{note023}
Let $\set{a(n)}_{n\geq 1}$ and $\set{F(n)}_{n\geq 1}$ be the sequences defined by Definition~\ref{defi031}. Then, notice that $F(n+1)=a(n+1)+2 F(n)$. For $n=1$, we set
$\ga_{F(1)}:=\ga_{a(1)}(\gn)\uu \set{S_1(\frac{19}{39}), S_2(\frac {19}{39})}$, and for $n\geq 2$, we set
$\ga_{F(n)}:=\ga_{a(n)}(\gn)\uu S_1(\ga_{F(n-1)})\uu S_2(\ga_{F(n-1)}).$ Then, inductively, we have
\begin{align} \label{eq404}
\ga_{F(n)}& =\ga_{a(n)}\uu \Big(\uu_{\go\in I}S_\go(\ga_{a(n-1)}(\gn))\Big)\uu \Big(\uu_{\go\in I^2} S_\go(\ga_{a(n-2)}(\gn))\Big)\\
& \cdots \uu \Big(\uu_{\go\in I^{n-1}}S_\go(\ga_{a(1)}(\gn))\Big)\uu \set{S_\go (\frac {19}{39}) : \go \in I^n}.\notag
\end{align}
Notice that the set $\ga_{F(n)}$ satisfies: $\ga_{F(n+1)}=S_1(\ga_{F(n)})\uu \ga_{a(n+1)}(\gn)\uu \ga_{F(n)}$.  We will show that $\ga_{F(n)}$ is an optimal set of $F(n)$-means for $P$.
For $n\in \D N$, we identify the sequence of sets
$ \ga_{a(n)}(\gn), \, \uu_{\go \in I} S_\go(\ga_{a(n-1)}(\gn)), \,  \uu_{\go \in I^2} S_\go(\ga_{a(n-2)}(\gn)), \, \cdots, \, \uu_{ \go \in I^{n-2}} S_\go(\ga_{a(2)}(\gn)), \, \uu_{\go \in I^{n-1}} S_\go(\ga_{a(1)}(\gn))$, and
$ \set{S_\go(\frac {19}{39}) : \go \in I^{n}}$, respectively, by $S(n)$, $S(n-1)$, $S(n-2)$,  $\cdots$, $S(2)$, $S(1)$, and $S(n+1)$. For $1\leq \ell\leq n$, write
\[S^{(2)}(\ell):=\uu_{\go \in I^{n-\ell}} S_\go(\ga_{a(\ell)+1}(\gn)) \te{ and } S^{(2)(2)}(\ell):=\uu_{\go \in I^{n-\ell}} S_\go(\ga_{a(\ell)+2}(\gn))=\uu_{\go \in I^{n-\ell}} S_\go(\ga_{a(\ell+1)}(\gn)). \]
Further, we write
\begin{align*} S^{(2)}(n+1)& :=\uu_{\go \in I^n} S_\go(\ga_2(P))=\uu_{\go \in I^n} \set{S_\go(\frac {659}{2730}), S_\go(\frac{1621}{1950})}, \\
S^{(2)(2)}(n+1)&:=\uu_{\go \in I^{n}} S_\go(\ga_{1}(\gn))\uu \set{S_\go(\frac {19}{39}) : \go \in I^{n+1}}, \te{ and }\\
S^{(2)(2)(2)}(n+1)&:=\uu_{\go \in I^{n}} S_\go(\ga_{a(1)}(\gn))\uu \set{S_\go(\frac {19}{39}) : \go \in I^{n+1}}.
 \end{align*}
 Moreover, for any $\ell\in \D N$, if $A:=S(\ell)$, we identify $S^{(2)}(\ell)$, $S^{(2)(2)}(\ell)$, and $S^{(2)(2)(2)}(\ell)$,  respectively, by $A^{(2)}$, $A^{(2)(2)}$, and  $A^{(2)(2)(2)}$. For $n\in\D N$, set
 \[SF(n):=\set{S(n), S(n-1), S(n-2), \cdots, S(1), S(n+1)}.\]
In addition, write
\begin{align*} \label{eq35} SF^\ast(n): =&\set{S(n), S(n-1), S(n-2),\cdots, S(1), S(n+1), S^{(2)}(n),  S^{(2)}(n-1), S^{2)}(n-2), \cdots,\\
&  S^{(2)}(1), S^{(2)}(n+1),  S^{(2)(2)}(n), S^{(2)(2)}(n-1), \cdots,  S^{(2)(2)}(1), S^{(2)(2)}(n+1),\\
&  S^{(2)(2)(2)}(n+1)}.
\end{align*}
For any element $a\in A \in SF^\ast(n)$, by the Voronoi region of $a$ it is meant the Voronoi region of $a$ with respect to the set $\uu_{B\in SF^\ast(n)} B$.
Similarly, for any $a\in A\in SF(n)$, by the Voronoi region of $a$ it is meant the Voronoi region of $a$ with respect to the set $\uu_{B\in SF(n)} B$. Notice that if $a, b\in A$, where $A \in SF(n)$ or $A\in SF^\ast(n)$ except the sets $S^{(2)}(n+1)$, $S^{(2)(2)}(n+1)$, and $S^{(2)(2)(2)}(n+1)$, the error contributed by $a$ in the Voronoi region of $a$ equals to the error contributed by $b$ in the Voronoi region of $b$. On the other hand, each of $S^{(2)}(n+1)$, $S^{(2)(2)}(n+1)$, and $S^{(2)(2)(2)}(n+1)$ contains two different sets: the error in the Voronoi region of an element in one set is larger than the error in the Voronoi region of an element in the other set.
Let us now define an order $\succ $ on the set $SF^\ast(n)$ as follows: For $A, B\in SF^\ast(n)$ by $A\succ B$ it is meant that the error contributed by any element $a\in A$ in the Voronoi region of $a$ is greater or equal to the error contributed by any element $b \in B$ in the Voronoi region of $b$. Similarly, we define the order relation $\succ $ on the set $SF(n)$.
\end{note1}

\begin{remark1} Let $\ga_{F(n)}$ be the set defined by \eqref{eq404}. Then,
\[\ga_{F(n+1)}=S^{(2)(2)}(1)\uu  S^{(2)(2)}(2)\uu   S^{(2)(2)}(3)\uu \cdots\uu S^{(2)(2)}(n)\uu S^{(2)(2)(2)}(n+1).\]
\end{remark1}

\begin{remark1} \label{remark00} Take any $1\leq \ell  <n$.
The distortion errors due to any element in the sets $S^{(2)(2)}(\ell+1)$ and $S^{(2)(2)}(\ell)$ are, respectively,  $\frac 13 \frac 1{75^{n-(\ell+1)}} V_{a(\ell+2)}(\gn)=\frac 13 \frac 1{75^{n-(\ell+1)}}\frac{2^{-6 -6\ell}}{1575}$ and  $\frac 13 \frac 1{75^{n-\ell}} V_{a(\ell+1)}(\gn)=\frac 13 \frac 1{75^{n-\ell}}\frac{2^{-6\ell}}{1575}$. Notice that
\[\frac 13 \frac 1{75^{n-(\ell+1)}}\frac{2^{-6 -6\ell}}{1575} >\frac 13 \frac 1{75^{n-\ell}}\frac{2^{-6\ell}}{1575},\]
and so, $S^{(2)(2)}(n)\succ S^{(2)(2)}(n-1)\succ S^{(2)(2)}(n-2)\succ \cdots \succ S^{(2)(2)}(2)\succ S^{(2)(2)}(1)$.
Similarly, $S^{(2)}(n)\succ S^{(2)}(n-1)\succ S^{(2)}(n-2)\succ \cdots \succ S^{(2)}(2)\succ S^{(2)}(1)$, and  $S(n)\succ S(n-1)\succ S(n-2)\succ \cdots \succ S(2)\succ S(1)$.
\end{remark1}

\begin{lemma1}\label{lemma99} Let $n$ be a large positive integer, in fact, $n\geq 40$. Then, the following inequalities are true:

$(i)$ If $2<n-k<n-k+26\leq n$, then $S(n-k)\succ S^{(2)}(n-k+13)\succ S^{(2)(2)}(n-k+26)\succ S(n-k-1)$,

$(ii)$ $S(14)\succ S^{(2)}(27)\succ S^{(2)(2)}(40)\succ S(n+1)\succ S(13)$.

$(iii)$ $S(5)\succ S^{(2)}(18)\succ S^{(2)(2)}(31)\succ S^{(2)} (n+1)\succ S(4)$.

$(iv)$ $S^{(2)(2)}(13)\succ S^{(2)(2)} (n+1)\succ S^{(2)(2)(2)} (n+1)\succ S^{(2)(2)}(12)$.

\end{lemma1}

\begin{proof} Let $2<n-k<n-k+26\leq n$. Then, $S(n-k)\succ S^{(2)}(n-k+13)\succ S^{(2)(2)}(n-k+26)\succ S(n-k-1)$ will be true if
\[\frac 1 3\frac 1 {75^{k}} V_{a(n-k)}(\gn)>\frac 1 3\frac 1 {75^{k-13}} V_{a(n-k+13)+1}(\gn)>\frac 1 3\frac 1 {75^{k-26}} V_{a(n-k+26)+2}(\gn)>\frac 1 3\frac 1 {75^{k+1}} V_{a(n-k-1)}(\gn)\]
i.e., if
\begin{align} \label{eq5555} V_{a(n-k)}(\gn)>75^{13} V_{a(n-k+13)+1}(\gn)>75^{26} V_{a(n-k+26)+2}(\gn)>\frac 1{75}   V_{a(n-k-1)}(\gn).\end{align}
By putting the values of $V_{a(n-k)}(\gn)$, $V_{a(n-k+13)+1}(\gn)$, $V_{a(n-k+26)+2}(\gn)$, and $V_{a(n-k-1)}(\gn)$, we see that the inequalities in \eqref{eq5555} are clearly true. Thus, the proof of $(i)$ is complete.

By $(i)$ we know that $S(14)\succ S^{(2)}(27)\succ S^{(2)(2)}(40)$. Thus, to prove $(ii)$ it is enough to prove that $S^{(2)(2)}(40)\succ S(n+1)\succ S(13)$, which is true if \begin{align} \label{eq55551} \frac 1 3\frac 1 {75^{n-40}} V_{a(40)+2}(\gn)>\frac 1 {75^{n}} V>\frac 1 3\frac 1 {75^{n-13}} V_{a(13)}(\gn).\end{align}
By putting the values of  $V_{a(40)+2}(\gn)$, $V$, and $V_{a(13)}(\gn)$, we see that the inequalities in \eqref{eq55551} are clearly true. Thus, the proof of $(ii)$ is complete. Similarly, we can prove the inequalities in $(iii)$ and $(iv)$. Thus, the lemma is yielded.
\end{proof}

\begin{remark1} \label{remark5656}  By Remark~\ref{remark00} and Lemma~\ref{lemma99}, for any large positive integer $n$, in fact if $n\geq 40$, it follows that
\begin{align*}& S(n)\succ S(n-1)\succ \cdots\succ S(n-13)\succ S^{(2)}(n)\succ S(n-14)\succ S^{(2)}(n-1)\\
& \succ S(n-15)\succ S^{(2)}(n-2)\succ  S(n-16) \succ \cdots \\
&\succ S(n-26)\succ S^{(2)}(n-13)\succ S^{(2)(2)}(n)\succ S(n-27)\succ \cdots \\
&S(14)\succ  S^{(2)}(27)\succ S^{(2)(2)}(40)\succ S(n+1)\succ S(13)\succ  S^{(2)}(26)\succ S^{(2)(2)}(39)\succ S(12)\succ \cdots \\
&S(5)\succ  S^{(2)}(18)\succ S^{(2)(2)}(31)\succ S^{(2)}(n+1)\succ S(4)\succ S^{(2)}(17)\succ S^{(2)(2)}(30)\succ S(3)\succ \cdots \\
&S(1)\succ  S^{(2)}(14)\succ S^{(2)(2)}(27)\succ S^{(2)}(13)\succ   S^{(2)(2)}(26)\succ S^{(2)}(12)\succ S^{(2)(2)}(25)\succ \cdots \\
&S^{(2)}(1)\succ S^{(2)(2)}(14)\succ S^{(2)(2)}(13)\succ  S^{(2)(2)}(n+1)\succ S^{(2)(2)(2)}(n+1)\succ S^{(2)(2)}(12)\\
& \succ S^{(2)(2)}(11)\succ S^{(2)(2)}(10) \succ \cdots S^{(2)(2)}(1).
\end{align*}
\end{remark1}

\begin{lemma1}\label{lemma0711}  For any two sets $A, B\in SF (n)$, let $A\succ B$. Then, the distortion error due to the set $ (SF(n)\setminus A)\uu A^{(2)}\uu B$ is less than the distortion error due to the set $ (SF(n)\setminus B)\uu B^{(2)}\uu A$.
\end{lemma1}

\begin{proof}
Let $V(\ga_{F(n)})$ be the distortion error due to the set $\ga_{F(n)}$ with respect to the condensation measure $P$. First take $A=S(k)$ and $B=S(k')$ for some $1\leq k, k'\leq n$. Then, $A\succ B$ implies $k>k'$. The distortion error due to the set $(\ga_{F(n)}\setminus A)\uu A^{(2)}\uu B$ is given by
\begin{align}\label{eq045}
&V(\ga_{F(n)})-\frac 1 3(\frac 2{75})^{n-k}  V_{a(k)}(\gn)+\frac 13 (\frac 2{75})^{n-k} V_{a(k)+1}(\gn)+\frac 1 3(\frac 2{75})^{n-k'} V_{a(k')}(\gn) \\
&=V(\ga_{F(n)})-\frac {56} 3 (\frac 2{75})^{n-k}  \frac{2^{-6k}}{1575}+\frac {64}3 (\frac 2{75})^{n-k'}  \frac{2^{-6k'}}{1575}. \notag
\end{align}
Similarly, the distortion error due to the set $(\ga_{F(n)}\setminus B)\uu B^{(2)}\uu A$ is
\begin{align} \label{eq046}
V(\ga_{F(n)})-\frac {56} 3 (\frac 2{75})^{n-k'}  \frac{2^{-6k'}}{1575}+\frac {64}3 (\frac 2{75})^{n-k}  \frac{2^{-6k}}{1575}
\end{align}
Thus, \eqref{eq045} will be less than \eqref{eq046} if $\Big(\frac{75}{128}\Big)^{k'}<\Big(\frac{75}{128}\Big)^{k}$, which is clearly true since $k>k'$.
As mentioned in Remark~\ref{remark5656}, assume that $S(14)\succ S(n+1)\succ S(13)$. Take $A=S(14)$, and $B=S(13)$. Here $A\succ B$.  Proceeding as before, it can also be seen that  the distortion error due to the set  $(\ga_{F(n)}\setminus A)\uu A^{(2)}\uu B$ is less than the distortion error due to the set  $(\ga_{F(n)}\setminus B)\uu B^{(2)}\uu A$. Similarly, we can show it for any two sets $A, B\in SF(n)$ with $A\succ B$. Thus, the lemma is yielded.
\end{proof}

Let us now state the following lemma. The proof is similar to the proof of Lemma~\ref{lemma0711}.
\begin{lemma1}\label{lemma0721}  For any two sets $A, B\in SF^\ast (n)$, let $A\succ B$. Then, the distortion error due to the set $ (SF^\ast(n)\setminus A)\uu A^{(2)}\uu B$ is less than the distortion error due to the set $ (SF^\ast(n)\setminus B)\uu B^{(2)}\uu A$.
\end{lemma1}

\begin{propo} \label{prop0004} For $n\geq 1$, let $\ga_{F(n)}$ be the set defined by Note~\ref{note023}. Then, $\ga_{F(n)}$ forms an optimal set of $F(n)$-means for $P$ and the corresponding quantization error is given by
\[V_{F(n)}=\frac{769208}{5884749}(\frac 2{75})^n-\frac {64}{3339}(\frac 1{64})^n\]
\end{propo}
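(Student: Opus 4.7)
The plan is to induct on $n$. The base case $n=1$ is the content of Proposition~\ref{prop000021}: the set $\ga_{F(1)}=\ga_2(\gn)\uu\{S_1(\tfrac{19}{39}),S_2(\tfrac{19}{39})\}$ is an optimal set of four means with quantization error $185729/58292325$, and one verifies that the asserted closed form specializes to this value at $n=1$.

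For the inductive step, let $\ga$ be any optimal set of $F(n)$-means. Proposition~\ref{prop00004} forbids points of $\ga$ in $(\tfrac{1}{5},\tfrac{2}{5})\uu(\tfrac{3}{5},\tfrac{4}{5})$, so $\ga$ decomposes as $\ga_1\sqcup\ga_0\sqcup\ga_2$ with $\ga_i\ci J_i$ for $i=1,2$ and $\ga_0\ci C$; set $k_i:=\te{card}(\ga_i)$, so $k_1+k_0+k_2=F(n)$. Using Lemma~\ref{lemma001} together with the self-similar decomposition $P=\tfrac{1}{3}P\circ S_1^{-1}+\tfrac{1}{3}\gn+\tfrac{1}{3}P\circ S_2^{-1}$, the pieces $\ga_i$ for $i\in\{1,2\}$ must be $S_i$-images of optimal $k_i$-means sets for $P$, while $\ga_0$ is an optimal $k_0$-means set for $\gn$. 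Consequently
\[
V_{F(n)}=\frac{1}{75}\bigl(V_{k_1}(P)+V_{k_2}(P)\bigr)+\frac{1}{3}V_{k_0}(\gn),
\]
and the task reduces to minimizing the right side over nonnegative integer triples with $k_1+k_0+k_2=F(n)$.

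The heart of the argument is the claim that the minimum is attained precisely at $(k_1,k_0,k_2)=(F(n-1),a(n),F(n-1))$. I would establish this via the exchange argument encoded in Remark~\ref{remark5656} together with Lemmas~\ref{lemma0711} and \ref{lemma0721}: the set $\ga_{F(n)}$ arises from greedy refinement, always splitting the block with the largest current per-element distortion, and the listed $\succ$-chain together with the two swap inequalities rules out every deviation from this prescription. Symmetry of the contractions $S_1$ and $S_2$ (same ratio, same weight) forces $k_1=k_2$, and the relative position of the $\ga_{a(n)}(\gn)$-block within the $\succ$-chain pins down $k_0=a(n)=2n$.

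Once the optimal split is fixed, the induction hypothesis for $V_{F(n-1)}$ together with the value $V_{a(n)}(\gn)=2^{-6(n-1)}/1575$ (Remark~\ref{remark00}) yields
\[
V_{F(n)}=\frac{2}{75}V_{F(n-1)}+\frac{1}{3}\cdot\frac{2^{-6(n-1)}}{1575},
\]
and a routine algebraic simplification, collecting the $(2/75)^n$ and $(1/64)^n$ coefficients, produces the asserted closed form. The main obstacle is the combinatorial optimization step: asymmetric splits $k_1\ne k_2$, as well as trades that push points between $C$ and $J_1\uu J_2$, must all be eliminated. The ordering in Remark~\ref{remark5656} is engineered precisely for this, but verifying that the listed $\succ$-chain is both consistent and exhaustive relies on the inequalities of Lemma~\ref{lemma99}, and this bookkeeping---rather than any conceptual difficulty---is where the real labor lies.
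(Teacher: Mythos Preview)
Your proposal is correct and follows essentially the same approach as the paper: both use Proposition~\ref{prop000021} for the base case, Proposition~\ref{prop00004} to forbid points in the gaps, and the greedy-refinement/exchange argument via Lemmas~\ref{lemma0711}--\ref{lemma0721} together with the $\succ$-chain of Remark~\ref{remark5656} for the inductive step. Your decomposition framing (optimize the split $(k_1,k_0,k_2)$) is a minor repackaging of the paper's construction (build $\ga_{F(n+1)}$ from $\ga_{F(n)}$ by successive block refinements), and the closing computation of $V_{F(n)}$ via the recurrence $V_{F(n)}=\tfrac{2}{75}V_{F(n-1)}+\tfrac13 V_{a(n)}(\gn)$ matches the paper's direct summation.
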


\begin{proof} We see that $\ga_{F(1)}=\ga_{a(1)}(\gn)\uu \set{S_\go (\frac{19}{39}) : \go \in I}$, which by Proposition~\ref{prop000021}, is an optimal set of four-means for $P$. Thus, the proposition is true for $n=1$. Let $\ga_{F(n)}$ be an optimal set of $F(n)$-means for some $n\geq 1$. We now show that $\ga_{F(n+1)}$ is an optimal set of $F(n+1)$-means. We have
$\ga_{F(n)}=\uu_{A\in SF(n)}A$. Recall that, by Proposition~\ref{prop00004}, an optimal set of $n$-means for any $n\geq 3$ does not contain any point from the open intervals $(\frac 15, \frac 25)$ and $(\frac 35, \frac 45)$. In the first step, let $A(1) \in SF(n)$ be such that $A(1)\succ B$ for any other $B\in SF(n)$. Then, by Lemma~\ref{lemma0711},
the set  $(\ga_{F(n)}\setminus A(1)) \uu A^{(2)}(1)$ gives an optimal set of $F(n)-\te{card}(A(1))+\te{card}(A^{(2)}(1))$-means. In the 2nd step, let $A(2) \in (SF(n)\setminus \set{A(1)})\uu \set{A^{(2)}(1)}$ be such that $A(2)\succ B$ for any other set $B\in  (SF(n)\setminus \set{A(1)})\uu \set{A^{(2)}(1)}$.
Then, using the similar technique as Lemma~\ref{lemma0711}, we can show that the distortion error due to the following set:
\begin{equation} \label{eq044} \Big(((\ga_{F(n)}\setminus A(1)) \uu A^{(2)}(1))\setminus A(2)\Big)\uu A^{(2)}(2)
\end{equation}
 with cardinality $F(n)-\te{card}(A(1))+\te{card}(A^{(2)}(1))-\te{card}(A(2))+\te{card}(A^{(2)}(2))$ is smaller than the distortion error due to the set obtained by replacing $A(2)$ in the set \eqref{eq044} by the set $B$. In other words, $\Big(((\ga_{F(n)}\setminus A(1)) \uu A^{(2)}(1))\setminus A(2)\Big)\uu A^{(2)}(2)$ forms an optimal set of
$F(n)-\te{card}(A(1))+\te{card}(A^{(2)}(1))-\te{card}(A(2))+\te{card}(A^{(2)}(2))$-means. Proceeding inductively in this way, we can see that the set $\ga_{F(n+1)}=S^{(2)(2)}(1)\uu  S^{(2)(2)}(2)\uu   S^{(2)(2)}(3)\uu \cdots\uu S^{(2)(2)}(n)\uu S^{(2)(2)(2)}(n+1)$ forms an optimal set of $F(n+1)$-means.
Thus, by the induction principle, we can say that for any $n\geq 1$, the set $\ga_{F(n)}$ forms an optimal set of $F(n)$-means for the condensation measure $P$.  The corresponding quantization error is given by
\begin{align*}
&V_{F(n)}=\frac 1 3\Big(V_{a(n)}(\gn)+ (\frac 2{75})V_{a(n-1)}(\gn)+ (\frac 2{75})^2V_{a(n-2)}(\gn)+\cdots+ (\frac 2{75})^{n-1}V_{a(1)}(\gn)\Big)+ (\frac 2{75})^n V\\
&=\frac 1 3 \frac {1}{1575} \Big(\frac {1}{64^{n-1}}+ (\frac 2{75})\frac {1}{64^{n-2}}+ (\frac 2{75})^2\frac {1}{64^{n-3}}+\cdots+ (\frac 2{75})^{n-2}\frac {1}{64}+(\frac 2{75})^{n-1}\Big)+ (\frac 2{75})^n V,
\end{align*}
which after simplification implies that
\[V_{F(n)}=\frac {64}{3339} \Big((\frac 2{75})^n-(\frac 1{64})^n\Big)+(\frac 2{75})^nV=\frac{769208}{5884749}(\frac 2{75})^n-\frac {64}{3339}(\frac 1{64})^n.\]
Thus the proof of the proposition is complete.
\end{proof}

We now prove the following proposition.

\begin{propo} \label{prop0005} Let $n\in \D N$ be large. Then, the set
\begin{equation} \label{eq5655} \Big(\ga_{F(n)}\setminus \Big(\mathop{\uu}\limits_{k=18}^{n}S(k)\uu S(n+1)\Big)\Big){\uu}\Big(\mathop{\uu}\limits_{k=31}^{n}S^{(2)(2)}(k) \Big)\uu \Big(\mathop{\uu}\limits_{k=18}^{30} S^{(2)}(k)\Big)\uu S^{(2)}(n+1)
 \end{equation} forms an optimal set of $\Big(2^n\Big(6+2^{-30}(2^{13}+1)\Big)-2n -6\Big)$-means for $P$ and the corresponding quantization error is given by
\begin{align*} &V_{2^n(6+2^{-30}(2^{13}+1))-2n -6}\\
&=V_{F(n)}-\frac{128}{3975}(\frac 2{75})^n\Big((\frac {75}{128})^{31}-(\frac {75}{128})^{n+1}\Big)-\frac{1024}{35775}(\frac 2{75})^n\Big((\frac {75}{128})^{18}-(\frac {75}{128})^{31}\Big)\\
&\qquad \qquad -\frac{450241}{5323500} (\frac 2{75})^n.
\end{align*}
\end{propo}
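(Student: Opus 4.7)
The plan is to construct the set in \eqref{eq5655} by a sequence of greedy refinements starting from the optimal set $\ga_{F(n)}$ furnished by Proposition~\ref{prop0004}. At each stage I identify some $A$ currently in the configuration that is maximal with respect to the ordering $\succ$ of Remark~\ref{remark5656}, and replace $A$ by its refinement $A^{(2)}$. By Lemma~\ref{lemma0721} such a swap yields a configuration of strictly smaller total distortion than any alternative single-set swap at the same step, and an induction analogous to the proof of Proposition~\ref{prop0004} shows that each intermediate configuration is itself optimal for its own cardinality.

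The next task is to read off from Remark~\ref{remark5656} which refinements occur before the procedure first reaches the configuration \eqref{eq5655}. Because $S(n+1)$ appears in the chain between $S^{(2)(2)}(40)$ and $S(13)$, and $S^{(2)}(n+1)$ appears between $S^{(2)(2)}(31)$ and $S(4)$, halting the process the instant $S^{(2)}(n+1)$ first enters the configuration leaves in the configuration exactly those sets obtained by executing, in the order dictated by $\succ$, (i) every refinement $S(k)\to S^{(2)}(k)$ for $k=18,\dots,n$; (ii) the refinement $S(n+1)\to S^{(2)}(n+1)$; and (iii) every refinement $S^{(2)}(k)\to S^{(2)(2)}(k)$ for $k=31,\dots,n$. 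In particular $S(1),\dots,S(17)$ remain untouched and $S^{(2)}(18),\dots,S^{(2)}(30)$ are never further refined, so the resulting set is precisely \eqref{eq5655}.

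It remains to verify the cardinality and the error. A refinement of type (i) or (iii) adds $2^{n-k}$ points and (ii) adds $2^n$ points, so the total increment over $F(n)$ is
\[
2^n+\sum_{k=18}^{n}2^{n-k}+\sum_{k=31}^{n}2^{n-k}=2^n+(2^{n-17}-1)+(2^{n-30}-1),
\]
which together with $F(n)=5\cdot 2^n-2n-4$ gives the claimed cardinality $2^n(6+2^{-30}(2^{13}+1))-2n-6$. For the error, the combined effect of (i) and (iii) on an index $k\in[31,n]$ reduces $V_{F(n)}$ by $\frac{1}{3}(\frac{2}{75})^{n-k}(V_{a(k)}(\gn)-V_{a(k)+2}(\gn))$; a refinement of type (i) with $k\in[18,30]$ reduces it by $\frac{1}{3}(\frac{2}{75})^{n-k}(V_{a(k)}(\gn)-V_{a(k)+1}(\gn))$; and (ii) contributes $(\frac{2}{75})^n(V-V_2)$. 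Substituting $V_{a(k)}(\gn)=2^{-6(k-1)}/1575$ along with the corresponding values of $V_{a(k)+1}(\gn)$ and $V_{a(k)+2}(\gn)$, each of the two $k$-sums collapses to a geometric series of common ratio $\frac{75}{128}$; evaluating them produces the coefficients $\frac{128}{3975}$ and $\frac{1024}{35775}$, while $V-V_2=\frac{450241}{5323500}$ supplies the final constant. The principal obstacle is the bookkeeping in Remark~\ref{remark5656}: one must verify that the greedy chain really halts at \eqref{eq5655}, with no stray refinement of $S(k)$ for $k\leq 17$ or of $S^{(2)}(k)$ for $18\leq k\leq 30$ sneaking in. Once this is pinned down, the cardinality check and the geometric-series simplification are routine.
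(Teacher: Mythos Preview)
Your proposal is correct and follows essentially the same route as the paper: invoke the greedy refinement mechanism (Remark~\ref{remark5656} and Lemma~\ref{lemma0721}) together with the inductive optimality argument from Proposition~\ref{prop0004} to certify that \eqref{eq5655} is optimal, then tally the cardinality and the drop in distortion. The only cosmetic difference is that the paper accounts for the passage $S(k)\to S^{(2)(2)}(k)$ ($31\le k\le n$) in one step, recording an increment of $2\cdot 2^{n-k}$ points and a reduction $\tfrac13(\tfrac{2}{75})^{n-k}(V_{a(k)}(\gn)-V_{a(k)+2}(\gn))$, whereas you split it as $S(k)\to S^{(2)}(k)\to S^{(2)(2)}(k)$; the totals agree, and your closing caveat about the bookkeeping in Remark~\ref{remark5656} matches the level of detail the paper itself gives.
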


\begin{proof} Due to Remark~\ref{remark5656}, Lemma~\ref{lemma0721}, using the similar technique as Proposition~\ref{prop0004}, we can show that the set
given by \eqref{eq5655} is an optimal set. Recall the definitions of $S(k)$, $S^{(2)}(k)$, $S^{(2)(2)}(k)$, $S(n+1)$, and $S^{(2)}(n+1)$. Thus, for $31\leq k\leq n$, in a set  when $S(k)$ is replaced by $S^{(2)(2)}(k)$, we are removing $2^{n-k}\cdot a(k)$ elements, and adding $2^{n-k}\cdot (a(k)+2)$ elements to the set leading to the number of elements increased by $2^{n-k}\cdot 2$ and the quantization error is decreased by $\frac 13 (\frac 2{75})^{n-k} (V_{a(k)}(\gn)-V_{a(k)+2}(\gn))$. For $18\leq k\leq 30$, when $S(k)$ is replaced by $S^{(2)}(k)$ in a set, we are removing $2^{n-k} a(k)$ elements, and adding $2^{n-k}\cdot (a(k)+1)$ elements to the set leading to the number of elements in the set increased by $2^{n-k}$, and the quantization error is decreased by $\frac 13 (\frac 2{75})^{n-k} (V_{a(k)}(\gn)-V_{a(k)+1}(\gn))$.  When  $S(n+1)$ is replaced by $S^{(2)}(n+1)$, the number of elements in the set is increased by $2^n$, and the quantization error is decreased by $(V-(V_{2,1}+V_{2,2}))$. Thus, the cardinality of the set  \eqref{eq5655} is given by
\begin{align*}
&F(n)+2\sum_{k=31}^n 2^{n-k}+\sum_{k=18}^{30} 2^{n-k}+2^n=F(n)+2 ( 2^{n-30}-1)+2^{n-30}(2^{13}-1)+2^n\\
&=(5\cdot 2^n-2n-4)+2^{n-30}(2^{13}+1)+2^n-2=2^n\Big(6+2^{-30}(2^{13}+1)\Big)-2n -6.
\end{align*}
The corresponding quantization error is
\begin{align*}
&V_{F(n)}-\frac 1 3 \sum_{k=31}^n(\frac 2{75})^{n-k} (V_{a(k)}(\gn)-V_{a(k)+2}(\gn))-\frac 1 3 \sum_{k=18}^{30}(\frac 2{75})^{n-k} (V_{a(k)}(\gn)-V_{a(k)+1}(\gn))\\
&\qquad \qquad -(\frac 2{75})^n(V-(V_{2,1}+V_{2,2}))\\
&=V_{F(n)}-\frac 1{75} (\frac 2{75})^n\sum_{k=31}^n(\frac {75}{128})^k -\frac{8}{675}(\frac 2{75})^n\sum_{k=18}^{30} (\frac {75}{128})^k-\frac{450241}{5323500} (\frac 2{75})^n\\
&=V_{F(n)}-\frac{128}{3975}(\frac 2{75})^n\Big((\frac {75}{128})^{31}-(\frac {75}{128})^{n+1}\Big) -\frac{1024}{35775}(\frac 2{75})^n\Big((\frac {75}{128})^{18}-(\frac {75}{128})^{31}\Big)\\
&\qquad \qquad-\frac{450241}{5323500} (\frac 2{75})^n.
\end{align*}
\end{proof}

\subsection{Asymptotics for the $n$th quantization error $V_n(P)$} \label{sec04}
In this subsection, we show that the quantization dimension $D(P)$ of the condensation measure $P$ exists, quantization coefficient does not exist, and the $D(P)$-dimensional lower and upper quantization coefficients are finite and positive.

\begin{theo} \label{Th2}
Let $P$ be the condensation measure associated with the self-similar measure $\gn$ and $\gk$ be the unique number given by $(\frac 13 (\frac 15)^2)^{\frac {\gk}{2+\gk}}+(\frac 13 (\frac 15)^2)^{\frac {\gk}{2+\gk}}=1$. Then, $\lim_{n\to \infty} \frac{2 \log n}{-\log V_n(P)}=\gk$, i.e., the quantization dimension $D(P)$ of the measure $P$ exists and equals $\gk$.
\end{theo}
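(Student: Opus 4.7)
The plan is to leverage the explicit formula for the quantization error along the sub-sequence $\{F(n)\}_{n\ge 1}$ given by Proposition~\ref{prop0004}, and then sandwich the general $n$th quantization error between two consecutive values $V_{F(k+1)}\le V_n\le V_{F(k)}$ using monotonicity of $n\mapsto V_n(P)$.

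First I would rewrite the defining equation for $\gk$: from $2\bigl(\tfrac{1}{75}\bigr)^{\gk/(2+\gk)}=1$ one gets $\tfrac{\gk}{2+\gk}=\tfrac{\log 2}{\log 75}$, equivalently $\gk=\tfrac{2\log 2}{\log(75/2)}$. Next, using Proposition~\ref{prop0004},
\[
V_{F(n)}=\frac{769208}{5884749}\Bigl(\frac{2}{75}\Bigr)^{n}-\frac{64}{3339}\Bigl(\frac{1}{64}\Bigr)^{n},
\]
and since $(2/75)^n$ dominates $(1/64)^n$ as $n\to\infty$, one has $-\log V_{F(n)}=n\log(75/2)+O(1)$. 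Together with $F(n)=5\cdot 2^n-2n-4$, whence $\log F(n)=n\log 2+O(1)$, this yields
\[
\lim_{n\to\infty}\frac{2\log F(n)}{-\log V_{F(n)}}=\frac{2\log 2}{\log(75/2)}=\gk.
\]

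For an arbitrary $n\in\D N$, choose $k=k(n)$ with $F(k)\le n<F(k+1)$. Because $V_n(P)$ is non-increasing in $n$,
\[
V_{F(k+1)}\le V_n(P)\le V_{F(k)},\qquad F(k)\le n\le F(k+1),
\]
so
\[
\frac{2\log F(k)}{-\log V_{F(k+1)}}\le \frac{2\log n}{-\log V_n(P)}\le \frac{2\log F(k+1)}{-\log V_{F(k)}}.
\]
From the asymptotics $F(k+1)/F(k)\to 2$ and $\bigl(-\log V_{F(k+1)}\bigr)-\bigl(-\log V_{F(k)}\bigr)\to \log(75/2)$, both outer ratios tend to $\gk$ as $k\to\infty$, i.e.\ as $n\to\infty$. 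Therefore $\ul D(P)=\ol D(P)=\gk$, so the quantization dimension exists and equals $\gk$.

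The only delicate step is the squeezing, but it is routine once one observes $F(k+1)\sim 2F(k)$; no subtle estimate beyond monotonicity of $V_n$ and the closed form for $V_{F(n)}$ is required, so I expect no serious obstacle.
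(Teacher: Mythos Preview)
Your proof is correct and follows essentially the same route as the paper: both pick $k=k(n)$ with $F(k)\le n<F(k+1)$, use monotonicity of $V_n$ to sandwich $\frac{2\log n}{-\log V_n}$ between $\frac{2\log F(k)}{-\log V_{F(k+1)}}$ and $\frac{2\log F(k+1)}{-\log V_{F(k)}}$, and then invoke the explicit formula for $V_{F(n)}$ from Proposition~\ref{prop0004} together with $F(n)=5\cdot 2^n-2n-4$ to show both bounds tend to $\gk=\frac{2\log 2}{\log(75/2)}$. Your use of $O(1)$ asymptotics in place of the paper's direct limit computation is a purely cosmetic difference.
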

\begin{proof} $(\frac 13 (\frac 15)^2)^{\frac {\gk}{2+\gk}}+(\frac 13 (\frac 15)^2)^{\frac {\gk}{2+\gk}}=1$ implies $\gk=\frac{2\log 2}{\log 75-\log 2}$.
For $n\in \D N$,  let $\ell(n)$ be the least positive integer such that $F(\ell(n))\leq n<F(\ell(n)+1)$. Then,
$V_{F(\ell(n)+1)}<V_n\leq V_{F(\ell(n))}$. Thus, we have
\begin{align*}
\frac {2\log\left(F(\ell(n))\right)}{-\log\left(V_{F(\ell(n)+1)}\right)}< \frac {2\log n}{-\log V_n}< \frac {2\log\left(F(\ell(n)+1)\right)}{-\log\left(V_{F(\ell(n))}\right)}.
\end{align*}
Notice that when $n\to \infty$, then $\ell(n)\to \infty$. Recall that $F(\ell(n))=5\cdot 2^{\ell(n)}-2 \ell(n)-4$, and so we have
\begin{align*}
&\lim_{\ell(n)\to\infty} \frac {2\log\left(F(\ell(n))\right)}{-\log\left(V_{F(\ell(n)+1)}\right)}=2 \lim_{\ell(n)\to\infty} \frac {\log \Big(5\cdot 2^{\ell(n)}-2 \ell(n)-4 \Big)}{-\log\Big(\frac{769208}{5884749}(\frac 2{75})^n-\frac {64}{3339}(\frac 1{64})^n\Big)}\\
& =\frac {2\log 2}{-\log \frac 2{75}}=\frac{2\log 2}{\log 75-\log 2}=\gk.
\end{align*}
Similarly, $\mathop{\lim}\limits_{\ell(n)\to\infty} \frac {2\log\left(F(\ell(n)+1)\right)}{-\log\left(V_{F(\ell(n))}\right)}=\gk$. Thus, $\gk\leq \liminf_n \frac{2\log n}{-\log V_n}\leq \limsup_n \frac{2\log n}{-\log V_n}\leq  \gk$ implying the fact that the quantization dimension of the measure $P$ exists and equals $\gk$.
\end{proof}

\begin{theo}\label{Th3}
$D(P)$-dimensional quantization coefficient for the condensation measure $P$ does not exist, and the $D(P)$-dimensional lower and upper quantization coefficients for $P$ are finite and positive.
\end{theo}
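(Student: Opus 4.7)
The plan is to sandwich $n^{2/\kappa} V_n(P)$ between positive finite constants using the exact quantization errors from Proposition~\ref{prop0004}, and then to exhibit two subsequences of $n$ along which $n^{2/\kappa} V_n$ converges to two different positive limits. Together these give that the lower and upper $\kappa$-dimensional quantization coefficients are finite and positive, yet unequal, so the $D(P)$-dimensional quantization coefficient does not exist.

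For the sandwich, given any large $n$ I would pick $\ell = \ell(n)$ with $F(\ell) \le n < F(\ell+1)$. Since $n \mapsto V_n(P)$ is non-increasing and $F(\ell) = 5\cdot 2^\ell - 2\ell - 4 \sim 5\cdot 2^\ell$, and since the defining relation for $\kappa$ gives $(1/75)^{\kappa/(2+\kappa)}=1/2$, equivalently $2^{2/\kappa}=75/2$, the formula
\[
V_{F(\ell)} = \tfrac{769208}{5884749}\bigl(\tfrac{2}{75}\bigr)^\ell - \tfrac{64}{3339}\bigl(\tfrac{1}{64}\bigr)^\ell \sim \tfrac{769208}{5884749}\bigl(\tfrac{2}{75}\bigr)^\ell
\]
from Proposition~\ref{prop0004} (using $1/64 < 2/75$) combines with the chain
\[
F(\ell)^{2/\kappa} V_{F(\ell+1)} \;\le\; n^{2/\kappa} V_n \;\le\; F(\ell+1)^{2/\kappa} V_{F(\ell)}
\]
to yield, on letting $\ell\to\infty$,
\[
\tfrac{2}{75}\cdot 5^{2/\kappa}\cdot \tfrac{769208}{5884749} \;\le\; \liminf_n n^{2/\kappa} V_n \;\le\; \limsup_n n^{2/\kappa} V_n \;\le\; \tfrac{75}{2}\cdot 5^{2/\kappa}\cdot \tfrac{769208}{5884749}.
\]
This establishes finiteness and positivity of both quantization coefficients.

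For non-existence, I would exhibit two subsequences with distinct limits. Along $n=F(\ell)$, Proposition~\ref{prop0004} together with $2^{2/\kappa}=75/2$ gives
\[
F(\ell)^{2/\kappa} V_{F(\ell)} \longrightarrow L_1 := 5^{2/\kappa}\cdot \tfrac{769208}{5884749}.
\]
Along the alternative optimal subsequence $n_\ell := 2^\ell\bigl(6+2^{-30}(2^{13}+1)\bigr) - 2\ell - 6$ supplied by Proposition~\ref{prop0005}, we have $n_\ell \sim c\cdot 2^\ell$ with $c := 6+2^{-30}(2^{13}+1)$. Since $75/128 < 1$, the term $(75/128)^{\ell+1}$ in the formula for $V_{n_\ell}$ vanishes as $\ell\to\infty$, and a direct manipulation gives
\[
n_\ell^{2/\kappa} V_{n_\ell} \longrightarrow L_2 := c^{2/\kappa}\cdot A,
\]
where
\[
A := \tfrac{769208}{5884749} - \tfrac{128}{3975}\bigl(\tfrac{75}{128}\bigr)^{31} - \tfrac{1024}{35775}\Bigl[\bigl(\tfrac{75}{128}\bigr)^{18} - \bigl(\tfrac{75}{128}\bigr)^{31}\Bigr] - \tfrac{450241}{5323500}.
\]

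The hard part will be verifying $L_1 \ne L_2$, which reduces to the numerical inequality $(c/5)^{2/\kappa} \cdot A \ne \tfrac{769208}{5884749}$. I would check this by direct numerical computation to sufficient precision, noting $2/\kappa = \log_2(75/2)\approx 5.228$, so $(c/5)^{2/\kappa} \approx 2.96$ while $A/\tfrac{769208}{5884749} \approx 0.35$, producing $L_2/L_1 \approx 1.04$, safely distinct from $1$. With $L_1\ne L_2$ confirmed, $n^{2/\kappa} V_n$ has at least two distinct accumulation points, so $\liminf_n n^{2/\kappa} V_n < \limsup_n n^{2/\kappa} V_n$ and the $D(P)$-dimensional quantization coefficient fails to exist.
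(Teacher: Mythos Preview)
Your approach is essentially identical to the paper's: the same sandwich $F(\ell)^{2/\kappa}V_{F(\ell+1)} \le n^{2/\kappa}V_n \le F(\ell+1)^{2/\kappa}V_{F(\ell)}$ together with Proposition~\ref{prop0004} gives the positive finite bounds (your constants $\tfrac{2}{75}\cdot 5^{2/\kappa}$ and $\tfrac{75}{2}\cdot 5^{2/\kappa}$ coincide with the paper's $(25/4)^{1/\kappa}$ and $100^{1/\kappa}$ since $4^{1/\kappa}=2^{2/\kappa}=75/2$), and the same two subsequences from Propositions~\ref{prop0004} and~\ref{prop0005} yield distinct limits $L_1,L_2$. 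One numerical slip: $(c/5)^{2/\kappa}\approx 2.59$, not $2.96$, so $L_2/L_1\approx 0.91$ rather than $1.04$; the conclusion $L_1\ne L_2$ is unaffected.
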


\begin{proof} We have $D(P)=\gk=\frac{2\log 2}{\log 75-\log 2}$, and so for any $n \in \D N$,  $2^{2n/\gk}=(\frac {75}{2})^n$. Hence, by Proposition~\ref{prop0004}, we have
\begin{align*}
& \lim_{n\to \infty} (F(n))^{2/\gk}V_{F(n)}(P) =\lim_{n\to \infty} (5\cdot 2^n-2n-4)^{2/\gk}\Big(\frac{769208}{5884749}(\frac 2{75})^n-\frac {64}{3339}(\frac 1{64})^n\Big)\\
&=\lim_{n\to \infty} (\frac {75}{2})^n (5-\frac{2n}{2^n}-\frac 4{2^n})^{2/\gk}\Big(\frac{769208}{5884749}(\frac 2{75})^n-\frac {64}{3339}(\frac 1{64})^n\Big)=5^{2/\gk} \frac{769208}{5884749}.
\end{align*}
Again, by Proposition~\ref{prop0005}, we have
\begin{align*}
& \lim_{n\to \infty} (2^n(6+2^{-30}(2^{13}+1))-2n -6)^{2/\gk}V_{2^n(6+2^{-30}(2^{13}+1))-2n -6}\\
 &=\lim_{n\to \infty} (2^n(6+2^{-30}(2^{13}+1))-2n -6)^{2/\gk}\Big(V_{F(n)}-\frac{128}{3975}(\frac 2{75})^n\Big((\frac {75}{128})^{31}-(\frac {75}{128})^{n+1}\Big)\\
 & \qquad -\frac{1024}{35775}(\frac 2{75})^n\Big((\frac {75}{128})^{18}-(\frac {75}{128})^{31}\Big)-\frac{450241}{5323500} (\frac 2{75})^n\Big)\\
&=\lim_{n\to \infty} (\frac {75}{2})^n (6+2^{-30}(2^{13}+1)-\frac{2n}{2^n}-\frac 6{2^n})^{2/\gk}\Big(\frac{769208}{5884749}(\frac 2{75})^n-\frac {64}{3339}(\frac 1{64})^n\\
 & \qquad -\frac{128}{3975}(\frac 2{75})^n\Big((\frac {75}{128})^{31}-(\frac {75}{128})^{n+1}\Big)-\frac{1024}{35775}(\frac 2{75})^n\Big((\frac {75}{128})^{18}-(\frac {75}{128})^{31}\Big)-\frac{450241}{5323500} (\frac 2{75})^n\Big)\\
 &=6^{2/\gk}\Big(\frac{769208}{5884749}-\frac{128}{3975}(\frac {75}{128})^{31}-\frac{1024}{35775}\Big((\frac {75}{128})^{18}-(\frac {75}{128})^{31}\Big)-\frac{450241}{5323500}\Big).
\end{align*}

Since $(F(n))^{2/\gk}V_{F(n)}(P)$, and $(2^n(6+2^{-30}(2^{13}+1))-2n -6)^{2/\gk}V_{2^n(6+2^{-30}(2^{13}+1))-2n -6}(P)$ are two subsequences of $(n^{2/\gk}V_n(P))_{n\in \D N}$ having two different limits, we can say that the sequence $(n^{2/\gk}V_n(P))_{n\in \D N}$ does not converge, in other words, the $D(P)$-dimensional quantization coefficient for $P$ does not exist.

To show that the $D(P)$-dimensional lower and upper quantization coefficients for $P$ are finite and positive, for $n\in \D N$, let $\ell(n)$ be the least positive integer such that $F(\ell(n))\leq n<F(\ell(n)+1)$. Then,
$V_{F(\ell(n)+1)}<V_n\leq V_{F(\ell(n))}$ implying $(F(\ell(n)))^{2/\gk}V_{F(\ell(n)+1)}<n^{2/\gk}V_n< (F(\ell(n)+1))^{2/\gk} V_{F(\ell(n))}$. As $\ell(n)\to \infty$ whenever $n\to \infty$, we have
\[\lim_{n\to \infty} \frac{F(\ell(n))}{F(\ell(n)+1)}=\lim_{n\to \infty} \frac{5\cdot 2^{\ell(n)}-2 \ell(n)-4 }{5\cdot 2^{\ell(n)+1}-2 (\ell(n)+1)-4 }=\frac 1{2},\]
and so,
\begin{align*} \label{eq56}
&\lim_{n\to \infty} (F(\ell(n)))^{2/\gk} V_{F(\ell(n)+1)}=\frac 1{4^{1/\gk}} \lim_{n\to \infty} (F(\ell(n)+1))^{2/\gk} V_{F(\ell(n)+1)}\\
&=\frac 1{4^{1/\gk}}\lim_{n\to \infty}(5\cdot 2^{\ell(n)+1}-2(\ell(n)+1)-4)^{2/\gk}\Big(\frac{769208}{5884749}(\frac 2{75})^{\ell(n)+1}-\frac {64}{3339}(\frac 1{64})^{\ell(n)+1}\Big)\\
&=\frac 1{4^{1/\gk}} 5^{2/\gk} \frac{769208}{5884749}=\Big(\frac  {25} 4\Big)^{1/\gk} \frac{769208}{5884749} \notag,
\end{align*} and similarly
\begin{align*}
\lim_{n\to \infty} (F(\ell(n)+1))^{2/\gk}V_{F(\ell(n))}=4^{1/\gk}\lim_{n\to \infty} (F(\ell(n)))^{2/\gk}V_{F(\ell(n))}=(100)^{1/\gk} \frac{769208}{5884749}
\end{align*}
yielding the fact that $(\frac  {25} 4)^{1/\gk} \frac{769208}{5884749} \leq \mathop{\liminf}\limits_{n\to\infty} n^{2/\gk} V_n(P)\leq \mathop{\limsup}\limits_{n\to \infty} n^{2/\gk}V_n(P)\leq (100)^{1/\gk} \frac{769208}{5884749}$, i.e., the $D(P)$-dimensional lower and upper quantization coefficients for the condensation measure $P$ is finite and positive. Thus, the proof of the theorem is complete.
\end{proof}

\begin{remark1} Notice that $(\frac 13 (\frac 15)^2)^{\frac {\gk}{2+\gk}}+(\frac 13(\frac 15)^2)^{\frac {\gk}{2+\gk}}=1$ implies $\gk=\frac{2\log 2}{\log75-\log 2}\approx 0.382496>0=D(\gn)$. In Theorem~\ref{Th2}, we have proved that $D(P)=\gk=\max\set{\gk, D(\gn)}.$ 
\end{remark1}

\section{Quantization for a condensation measure associated with a uniform distribution} \label{sec2}

In this section, we investigate the optimal quantization for the condensation measure $P$ when $\gn$ is the uniform distribution on the closed interval $[\frac 25, \frac 35]$.

\begin{lemma} \label{lemma22} Let $X$ and $Y$ be the random variables with the probability distributions $P$ and $\gn$, respectively. Let $E(X)$, $E(Y)$ represent the expected values, and $V:=V(X)$, $W:=V(Y)$ represent the variances of $X$ and $Y$, respectively. Then,
\[E(X)=E(Y)=\frac 12,  \, V=\frac{97}{876}, \te{ and } W=\frac 1{300}.\]

\end{lemma}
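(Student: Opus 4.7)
The plan is to exploit the self-similarity identity $P=\tfrac13 P\circ S_1^{-1}+\tfrac13 P\circ S_2^{-1}+\tfrac13\gn$ to convert each desired moment of $P$ into a linear equation in that same moment, and to handle the moments of $\gn$ directly from the formulas for the uniform distribution.

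First I would dispose of $\gn$. Since $Y$ is uniform on the interval $[\tfrac25,\tfrac35]$ of length $\tfrac15$, standard formulas give $E(Y)=\tfrac12$ and $W=V(Y)=\tfrac{(1/5)^2}{12}=\tfrac1{300}$, which is the second and third claim. I would also record $E(Y^2)=W+E(Y)^2=\tfrac1{300}+\tfrac14=\tfrac{19}{75}$, since this quantity feeds into the computation for $P$.

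Next I would compute $E(X)$. Applying Lemma~\ref{lemma1} with $n=1$ and $g(x)=x$, together with $S_1(x)=\tfrac15 x$ and $S_2(x)=\tfrac15 x+\tfrac45$, gives
\[
E(X)=\tfrac13\!\int\! S_1(x)\,dP+\tfrac13\!\int\! S_2(x)\,dP+\tfrac13\!\int\! x\,d\gn=\tfrac{2}{15}E(X)+\tfrac{4}{15}+\tfrac16,
\]
and solving this linear equation in $E(X)$ yields $E(X)=\tfrac12$. For the second moment I would do the analogous computation with $g(x)=x^2$: expanding $(S_1 x)^2$ and $(S_2 x)^2$ and using the already-computed values $E(X)=\tfrac12$ and $\int x^2\,d\gn=\tfrac{19}{75}$ produces
\[
E(X^2)=\tfrac{2}{75}E(X^2)+\tfrac{4}{15}+\tfrac{1}{3}\cdot\tfrac{19}{75},
\]
so $E(X^2)=\tfrac{79}{219}$. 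Then $V=E(X^2)-E(X)^2=\tfrac{79}{219}-\tfrac14=\tfrac{97}{876}$, completing the lemma.

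There is no real obstacle here: every step is an elementary linear algebra manipulation, and the only mildly delicate point is making sure the invariance identity is being applied correctly (which is precisely what Lemma~\ref{lemma1} with $n=1$ guarantees) and that the arithmetic $\tfrac{79}{219}-\tfrac14=\tfrac{316-219}{876}=\tfrac{97}{876}$ is carried out without slip. The structure of the proof is exactly parallel to the proof of Lemma~\ref{lemma3} in the discrete case, with the discrete sum $\int x^k\,d\gn=\sum_j a_j^k/2^j$ replaced by the closed-form moments of the uniform distribution on $[\tfrac25,\tfrac35]$.
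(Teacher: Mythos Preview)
Your proof is correct and follows essentially the same route as the paper's own argument: apply Lemma~\ref{lemma1} with $n=1$ to the functions $g(x)=x$ and $g(x)=x^2$ to obtain linear equations for $E(X)$ and $E(X^2)$, and compute the moments of $\gn$ directly from the uniform density on $[\tfrac25,\tfrac35]$. The only cosmetic difference is that the paper evaluates $\int x^2\,d\gn$ by explicit integration rather than via $W+E(Y)^2$, but the substance is identical.
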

\begin{proof} Using Lemma~\ref{lemma1}, we have
$
E(X)=\int x dP(x)=\frac 1 3\int \frac 15 x dP(x) +\frac 13  \int (\frac 15 x +\frac 45 ) dP(x)+\frac 13 \int x d\gn(x)= \frac 1{15}  \,E(X) +\frac 1{15} E(X) +\frac 4{15}+ \frac 16,
$
which implies $E(X)=\frac 1 2$. Again, $E(X^2)=\int x^2 dP(x)=\frac 1 3 \int x^2 d(P\circ S_1^{-1})(x)+\frac 13 \int x^2 d(P\circ S_2^{-1})(x) +\frac 13 \int x^2 d\gn=\frac 1 3  \int(\frac {1} {5} x)^2 dP(x) +\frac 13  \int (\frac 15  x +\frac 45 )^2 dP(x)+\frac 13 \int_{\frac 25}^{\frac 35} 5x^2 dx= \frac 1{75} E(X^2) +\frac 1{75} E(X^2) +\frac 8{75} E(X)+\frac {16}{75} +\frac{19} {225}
$
which implies $E(X^2)=\frac{79}{219}$, and hence
$V(X)=E(X-E(X))^2=E(X^2)-\left(E(X)\right)^2 =\frac{79}{219}-\frac{1}{4}=\frac{97}{876}$. On the other hand, $E(Y)=\int y d\gn=\int_{\frac 2 5}^{\frac 35} 5 y dy =\frac 12$, and $V(Y)=E(Y-\frac 12)^2=\int_{\frac 25}^{\frac 35} 5(y-\frac 12)^2 dy=\frac 1{300}$. Thus, the proof of the lemma is complete.
\end{proof}

\begin{lemma} \label{lemma31} Let $P$ be the condensation measure and $\gn$ be the uniform distribution that occurs in $P$. Then, for any $x_0\in \D R$, we have
\[\int (x-x_0)^2 dP(x) =\frac{97}{876}+(x_0-\frac 12)^2, \te{ and } \int (x-x_0)^2 d\gn(x) =\frac 1{300} +(x_0-\frac 12)^2.\]
\end{lemma}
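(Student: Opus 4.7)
The plan is to apply the standard identity that, for any square-integrable random variable $Z$ with mean $m$ and variance $\sigma^2$, one has $E[(Z-x_0)^2] = \sigma^2 + (m-x_0)^2$, and then to feed in the concrete values of $E(X), V(X), E(Y), V(Y)$ that were already computed in Lemma~\ref{lemma22}.

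Concretely, I would start from the algebraic decomposition
\[
(x-x_0)^2 \;=\; \bigl(x-\tfrac12\bigr)^2 \;+\; 2\bigl(x-\tfrac12\bigr)\bigl(\tfrac12-x_0\bigr)\;+\;\bigl(\tfrac12-x_0\bigr)^2,
\]
writing $\tfrac12$ because, by Lemma~\ref{lemma22}, $\tfrac12$ is simultaneously $E(X)$ and $E(Y)$. Integrating against $P$ and using $\int (x-\tfrac12)\,dP(x)=E(X)-\tfrac12=0$ together with $\int(x-\tfrac12)^2\,dP(x)=V(X)=\tfrac{97}{876}$ kills the cross term and gives the first equality. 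Integrating the same decomposition against $\gn$ and using $\int(y-\tfrac12)\,d\gn(y)=E(Y)-\tfrac12=0$ and $\int(y-\tfrac12)^2\,d\gn(y)=V(Y)=\tfrac{1}{300}$ gives the second equality.

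There is no genuine obstacle here: Lemma~\ref{lemma22} already supplies every numerical ingredient, and the remaining content is the classical ``variance plus squared bias'' identity. The only thing to be careful about is to cite Lemma~\ref{lemma22} for the two expectations (so that the linear cross term really vanishes) and for the two variances (so that the constant term has the stated value). Once those are invoked, both formulas follow in one line each, completing the proof.
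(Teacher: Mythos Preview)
Your proposal is correct and follows exactly the paper's approach: invoke the standard identity $E[(Z-x_0)^2]=\mathrm{Var}(Z)+(E(Z)-x_0)^2$ and plug in the values $E(X)=E(Y)=\tfrac12$, $V(X)=\tfrac{97}{876}$, $V(Y)=\tfrac1{300}$ from Lemma~\ref{lemma22}. The only difference is that you write out the algebraic decomposition explicitly, whereas the paper simply says ``following the standard rule of probability.''
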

\begin{proof}  Let $X$ and $Y$ be the random variables as defined in Lemma~\ref{lemma22}. Then, following the standard rule of probability, we have
$\int(x-x_0)^2 dP(x) =V(X)+(x_0-\frac 12)^2$, and
$\int(x-x_0)^2 d\gn(x)=V(Y) +(x_0-\frac 12)^2$, which is the lemma.
\end{proof}
\begin{note}
Let $\go\in I^k$, $k\geq 0$. Then, by equation~\eqref{eq020}, it follows that $P(J_\go)=\frac 1{3^k}$, $P(C_\go)=\frac 1{3^{k+1}}$,
\begin{align*} & E(X : X \in J_\go)=\frac 1{P(J_\go)} \int_{J_\go} x dP=\int x d(P\circ S_\go^{-1})=\int S_\go(x) dP=S_\go(\frac 12), \te{ and }\\
&E(X : X \in C_\go)=\frac 1{P(C_\go)} \int_{C_\go} x dP=\int x d(\gn \circ S_\go^{-1})=\int S_\go(x) d\gn=S_\go(\frac 12).
\end{align*}
For any $x_0 \in \mathbb R$,
\begin{align} \label{eq2} \left\{\begin{array}{cc}  \int_{J_\go} (x-x_0)^2 dP(x) =\frac 1{3^k} \Big(\frac 1{25^k} V  +(S_\go(\frac 12)-x_0)^2\Big), \\
\int_{C_\go} (x-x_0)^2 d\gn(x) =\frac 1{3^{k+1}} \Big(\frac 1{25^k} W  +(S_\go(\frac 12)-x_0)^2\Big).&\end{array}\right.
\end{align}

\end{note}

\begin{remark}
By Lemma~\ref{lemma31}, it follows that the optimal set of one-mean for the condensation measure $P$ consists of the expected value $\frac 12$ and the corresponding quantization error is the variance $V(X)$ of the random variable $X$.
\end{remark}

The following proposition is useful.

\begin{prop}\label{prop011} Let $\gn$ be the uniform distribution on the interval $[\frac 25, \frac 35]$. Then, for $n\geq 1$, the set $\set{\frac 25+\frac {2i-1}{10n} : 1\leq i\leq n}$ is a unique optimal set of $n$-means for $\gn$ with quantization error $V_n(\gn)=\frac{W}{n^2}$. Moreover, the quantization dimension $D(\gn)$ of $\gn$ is one.

\end{prop}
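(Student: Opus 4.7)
The plan is to reduce the problem to the well-known optimal quantization of the uniform distribution on a compact interval, with length rescaled from $1$ to $\frac{1}{5}$. Let $\gb_n:=\set{\frac 25+\frac {2i-1}{10n} : 1\leq i\leq n}$ denote the candidate optimal set. First I would verify directly, using the formulas in display \eqref{eq2} (or equivalently the density $5\cdot \B 1_{[\frac 25,\frac 35]}$), that the distortion of $\gb_n$ equals
\[
\int_{\frac 25}^{\frac 35}\min_{b\in\gb_n}(x-b)^2\,5\,dx
= n\cdot 5\cdot \int_0^{\frac{1}{10n}} 2x^2\,dx
= \frac{1}{300\,n^2}=\frac{W}{n^2},
\]
so $V_n(\gn)\le W/n^2$.

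Next I would argue optimality and uniqueness. Let $\ga=\set{c_1<c_2<\cdots<c_n}$ be any optimal set of $n$-means. By Proposition~\ref{prop1000}(iii), each $c_i$ equals the $\gn$-expected value of its own Voronoi region. Because $\gn$ is supported on the convex set $[\frac 25,\frac 35]$ with strictly positive density there, the Voronoi regions intersected with $[\frac 25,\frac 35]$ are disjoint subintervals $[t_{i-1},t_i]$ with $t_0=\frac 25$, $t_n=\frac 35$, and the boundary condition $t_i=\frac 12(c_i+c_{i+1})$ for $1\le i\le n-1$ together with the centroid condition $c_i=\frac 12(t_{i-1}+t_i)$ (since the density is constant on each subinterval). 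These two families of equations form a tridiagonal linear system whose unique solution is $t_i=\frac 25+\frac i{5n}$ and $c_i=\frac 25+\frac{2i-1}{10n}$; this gives both optimality and uniqueness and confirms $V_n(\gn)=W/n^2$.

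For the quantization dimension, plugging the formula $V_n(\gn)=W/n^2$ into the definition yields
\[
D(\gn)=\lim_{n\to\infty}\frac{2\log n}{-\log V_n(\gn)}=\lim_{n\to\infty}\frac{2\log n}{2\log n-\log W}=1,
\]
so the upper and lower quantization dimensions coincide and equal $1$.

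The only mildly delicate step is the rigorous verification that the centroid-plus-bisector system forces equal spacing: one could either solve the tridiagonal recursion directly, or invoke the classical fact (e.g.\ from \cite{GL2}) that for any absolutely continuous distribution with convex compact support and strictly positive density, optimal quantizers are unique up to labelling and, in the constant-density case, are equally spaced midpoints. This is the main point needing care, but it is entirely standard; the calculations reducing $\gn$ to a rescaled uniform on $[0,1]$ via $S(x)=\frac 15 x+\frac 25$ make the reduction transparent.
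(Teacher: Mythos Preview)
Your proposal is correct and follows essentially the same route as the paper: both compute the distortion of the equally spaced candidate set, identify it as $W/n^2$, and obtain $D(\gn)=1$ by plugging this into the definition. The paper simply asserts that for a uniform distribution the Voronoi boundaries must partition $[\frac25,\frac35]$ into $n$ equal pieces, whereas you supply the extra rigor by deriving this from the centroid condition of Proposition~\ref{prop1000}(iii) together with the bisector relations, solving the resulting linear recursion $t_{i+1}=2t_i-t_{i-1}$; this is a welcome addition but not a different strategy.
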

\begin{proof}
Since $\gn$ is uniformly distributed on $[\frac 25, \frac 35]$, the boundaries of the Voronoi regions of an optimal set of $n$-means will divide the interval $[\frac 25, \frac 35]$ into $n$ equal subintervals, i.e., the boundaries of the Voronoi regions are given by
\[\set{\frac 25, \, \frac 25+\frac {1}{5n}, \, \frac 25+\frac 2{5n},\, \cdots, \frac 25+\frac{n-1}{5n}, \frac 35}.\]
This implies that an optimal set of $n$-means for $\gn$ is unique, and it consists of the midpoints of the boundaries of the Voronoi regions, i.e., the optimal set of $n$-means for $\gn$ is given by $\set{\frac 25+\frac{2i-1}{10n} : 1\leq i\leq n}$ for any $n\geq 1$. Then, the $n$th quantization error for $\gn$ is
\begin{align*}
V_n(\gn)&=n \times (\te{the quantization error in each Voronoi region})\\
&=n \Big(\int_{\frac{2}{5}}^{\frac{2}{5}+\frac{1}{5 n}} 5 \Big(x-(\frac{2}{5}+\frac{1}{10 n})\Big)^2  dx\Big)
\end{align*}
which after simplification implies $V_n(\gn)=\frac{1}{300\, n^2}=\frac{W}{n^2}$. Hence,
$D(\gn)=\mathop{\lim}\limits_{n\to \infty } \frac{2 \log (n)}{-\log(\frac{1}{300\, n^2})}=1.$
Thus, the proof of the proposition is complete.
\end{proof}

\begin{remark}
By Proposition~\ref{prop011}, it follows that $\mathop{\lim}\limits_{n\to\infty} n^2 V_n(\gn)=\frac 1{300}$, i.e., one-dimensional quantization coefficient for $\gn$ is finite and strictly positive. In fact, for any probability measure $\gm$ on $\D R^d$ with non-vanishing absolutely continuous part  $\mathop{\lim}\limits_{n\to\infty} n^{2/d} V_n(\gm)$ is finite and strictly positive; in other words, the quantization dimension of a probability measure with non-vanishing absolutely continuous part is equal to the dimension $d$ of the underlying space (see \cite{BW}).
\end{remark}

\begin{lemma} \label{lemma0021}
$\ga_n(\gn)$ be an optimal set of $n$-means for $\gn$. Then, for any $\go \in I^k$, $k\geq 0$, $S_\go(\ga_n(\gn))$ is an optimal set of $n$-means for the image measure $\gn\circ S_\go^{-1}$. Moreover,
\[\int_{C_\go}\min_{a\in S_\go(\ga_n(\gn))}(x-a)^2 dP=\frac 1 3\cdot \frac{1}{75^k}\cdot \frac {W}{n^2}.\]
\end{lemma}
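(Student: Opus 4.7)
The plan is to prove the two assertions in turn, each by an elementary scaling argument.

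For the first assertion, I would use the fact that $S_\go$ is a contractive similitude with ratio $5^{-k}$ and that translation of $n$-point sets by $S_\go^{-1}$ is a bijection. Specifically, for any $n$-point set $\gb\sci\D R$, the change of variables $y=S_\go^{-1}(x)$ gives
\[\int \min_{b\in \gb}(x-b)^2 \, d(\gn\circ S_\go^{-1})(x)=\int \min_{b\in \gb}\bigl(S_\go(y)-b\bigr)^2 \, d\gn(y)=\frac{1}{25^k}\int \min_{d\in S_\go^{-1}(\gb)}(y-d)^2 \, d\gn(y).\]
Hence minimizing the left-hand side over all $n$-point sets $\gb$ is equivalent, via $\gd\mapsto S_\go(\gd)$, to minimizing $\int\min_{d\in\gd}(y-d)^2\,d\gn$ over all $n$-point sets $\gd$. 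By Proposition~\ref{prop011}, the latter minimum is attained precisely at $\gd=\ga_n(\gn)$, so $S_\go(\ga_n(\gn))$ is an optimal set of $n$-means for $\gn\circ S_\go^{-1}$, and the common value is $25^{-k}V_n(\gn)=W/(25^k n^2)$.

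For the moreover part, I would exploit the iterated decomposition of $P$ from equation~\eqref{eq020},
\[P=\frac{1}{3^n}\sum_{|\go|=n}P\circ S_\go^{-1}+\sum_{k=0}^{n-1}\frac{1}{3^{k+1}}\sum_{|\go|=k}\gn\circ S_\go^{-1},\]
together with the strong separation condition for this system: since $S_1(K)\sci[0,\tfrac15]$, $C=[\tfrac25,\tfrac35]$, and $S_2(K)\sci[\tfrac45,1]$ are pairwise disjoint, the set $C_\go=S_\go(C)$ is disjoint from $C_\gt$ for every other word $\gt$ and from all $K_\gt$ with $|\gt|\geq k+1$ appearing on $C_\go$. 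Therefore the only component of the decomposition that charges $C_\go$ is $\frac{1}{3^{k+1}}\,\gn\circ S_\go^{-1}$, giving
\[\int_{C_\go}\min_{a\in S_\go(\ga_n(\gn))}(x-a)^2 \, dP(x)=\frac{1}{3^{k+1}}\int \min_{a\in S_\go(\ga_n(\gn))}(x-a)^2 \, d(\gn\circ S_\go^{-1})(x).\]
Plugging in the first assertion yields $\frac{1}{3^{k+1}}\cdot\frac{1}{25^k}\cdot\frac{W}{n^2}=\frac{1}{3}\cdot\frac{1}{75^k}\cdot\frac{W}{n^2}$, as required.

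Neither step is genuinely delicate. The main point to be careful about is the disjointness argument used to extract the single surviving term from the decomposition of $P$ on $C_\go$; this must rely on the explicit geometry of $S_1,S_2,C$ here rather than on any abstract property, but once noted it is immediate. The scaling in the first step is the standard fact that optimal quantizers and quantization errors of order $2$ transform covariantly under similitudes, with errors scaling by the square of the ratio.
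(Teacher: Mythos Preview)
Your proposal is correct and follows essentially the same route as the paper: the paper dispatches the first assertion by invoking Lemma~\ref{lemma001} (the analogous scaling fact stated for $P$), while you spell out the change-of-variables argument directly for $\gn$; and for the moreover part both you and the paper use \eqref{eq020} together with the disjointness of $C_\go$ from the other pieces to isolate the term $\frac{1}{3^{k+1}}\gn\circ S_\go^{-1}$, then apply the scaling and Proposition~\ref{prop011}. Your version is slightly more explicit about the geometric disjointness, but the underlying argument is identical.
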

\begin{proof}Let $\ga_n(\gn)$ be an optimal set of $n$-means for $\gn$. Then, $S_\go(\ga_n(\gn))$ is an optimal set of $n$-means for the image measure $\gn\circ S_\go^{-1}$ follows equivalently from Lemma~\ref{lemma001}. Now, using \eqref{eq020} and Proposition~\ref{prop011}, we have
\begin{align*}
&\int_{C_\go}\min_{a\in S_\go(\ga_n(\gn))}(x-a)^2 dP=\frac {1}{3^{k+1}} \int_{C_\go}\min_{a\in S_\go(\ga_n(\gn))}(x-a)^2 d(\gn\circ S_\go^{-1})\\
&=\frac {1}{3^{k+1}} \int_{C}\min_{a\in S_\go(\ga_n(\gn))}(S_\go(x)-a)^2 d\gn=\frac {1}{3^{k+1}}\cdot \frac{1}{25^k} \int_{C}\min_{a\in \ga_n(\gn)}(x-a)^2 d\gn=\frac 1 3\cdot \frac{1}{75^k}\cdot \frac {W}{n^2}.
\end{align*}
Thus, the proof of the lemma is complete.
\end{proof}

\subsection{Essential lemmas and propositions}
In this subsection, we give some lemmas and propositions that we need to determine the optimal sets of $n$-means and the $n$th quantization errors for all $n\geq 2$. To determine the quantization error we will frequently use the formulas given in the expression \eqref{eq2}.

Let us now state the following proposition. The proof of this proposition follows similarly as the proof of Proposition~\ref{prop01111}.
\begin{propo} \label{prop41}
Let $\ga:=\set{a_1, a_2}$ be an optimal set of two-means with $a_1<a_2$. Then, $a_1=\frac {13}{60}$, $a_2=\frac{47}{60}$, and the corresponding quantization error is $V_2=\frac{8003}{262800}=0.0304528$.
\end{propo}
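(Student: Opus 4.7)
The argument will mirror Proposition~\ref{prop01111}, but the discrete case analysis over subsets of the support of $\gn$ is replaced by a continuous optimization over the location of the Voronoi boundary inside $C=[\tfrac 25,\tfrac 35]$. Motivated by the symmetry of $P$ about $\tfrac 12$, I would first test the pair $\gb=\{d_1,d_2\}$ with $d_1=E(X\mid X\in J_1\uu[\tfrac 25,\tfrac 12])$ and $d_2=E(X\mid X\in[\tfrac 12,\tfrac 35]\uu J_2)$. Using Lemma~\ref{lemma1} one obtains $d_1=\tfrac{13}{60}$, and the symmetry of $P$ gives $d_2=\tfrac{47}{60}$. Splitting the distortion into contributions over $J_1$, $[\tfrac 25,\tfrac 12]$, $[\tfrac 12,\tfrac 35]$, $J_2$ and applying \eqref{eq2} to each term yields the exact value $\tfrac{8003}{262800}$, hence $V_2\leq \tfrac{8003}{262800}$.

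For the matching lower bound, let $\ga=\{c_1,c_2\}$, $c_1<c_2$, be any optimal set. As in Proposition~\ref{prop01111}, I would first rule out $c_1\geq \tfrac 25$ (otherwise $V_2\geq \int_{J_1}(x-\tfrac 25)^2\,dP$ exceeds the upper bound) and, symmetrically, $c_2\leq\tfrac 35$. This forces $m:=\tfrac 12(c_1+c_2)\in[\tfrac{3}{10},\tfrac{7}{10}]$, so the Voronoi region of $c_1$ (resp.\ $c_2$) meets no point of $J_2$ (resp.\ $J_1$). By Proposition~\ref{prop1000}, $c_1$ and $c_2$ are the centroids of their Voronoi cells. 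If $m<\tfrac 25$, optimality forces $c_1=E(X\mid J_1)=\tfrac 1{10}$ and $c_2=E(X\mid C\uu J_2)=\tfrac 7{10}$, whence $m=\tfrac 25$, a contradiction; the case $m>\tfrac 35$ is symmetric. Hence $m\in[\tfrac 25,\tfrac 35]$.

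With $m\in[\tfrac 25,\tfrac 35]$ fixed, direct integration of $x\,dP$ on $\{X<m\}$ and $\{X>m\}$ gives the explicit formulas
\[
c_1(m)=\frac{25m^2-3}{10(5m-1)}, \qquad c_2(m)=\frac{18-25m^2}{10(4-5m)},
\]
and the self-consistency relation $c_1(m)+c_2(m)=2m$ clears to the cubic $(2m-1)(5m-2)(5m-3)=0$, whose only roots are $\{\tfrac 25,\tfrac 12,\tfrac 35\}$. The main and only delicate step is to exclude the two boundary roots: a direct computation using \eqref{eq2} and Lemma~\ref{lemma22} shows that the distortion at $m=\tfrac 25$ equals $\tfrac{2V}{75}+\tfrac{W}{3}+\tfrac{2}{75}=\tfrac{8076}{262800}$, which strictly exceeds $\tfrac{8003}{262800}$, and $m=\tfrac 35$ is analogous by symmetry. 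This leaves $m=\tfrac 12$, whence $c_1=\tfrac{13}{60}$, $c_2=\tfrac{47}{60}$, and the optimal error coincides with the upper bound $\tfrac{8003}{262800}$.
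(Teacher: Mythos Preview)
Your proof is correct and follows essentially the same strategy the paper intends (the paper defers the argument to Proposition~\ref{prop01111}): establish the upper bound via the candidate pair, localize $c_1<\tfrac25$ and $c_2>\tfrac35$, and then determine the Voronoi boundary inside $C$. The only difference is presentational: where the discrete case enumerates over which atom $a_k$ separates the two Voronoi cells and minimizes $F(k)$, you replace this by the continuous analogue, deriving the centroid formulas $c_1(m),c_2(m)$ and solving the self-consistency cubic $(2m-1)(5m-2)(5m-3)=0$. Your computations check out (in particular $c_1(m)=\tfrac{25m^2-3}{10(5m-1)}$, the factorization, and the boundary distortion $\tfrac{2V}{75}+\tfrac{W}{3}+\tfrac{2}{75}=\tfrac{8076}{262800}>\tfrac{8003}{262800}$), and your use of the support gap on $(\tfrac15,\tfrac25)$ to force $m\ge\tfrac25$ is exactly what makes the continuous reduction clean.
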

%%\begin{proof}
%%Due to symmetry of the condensation measure $P$ with respect to the midpoint $\frac 12$, we can assume that if $\set{a_1, a_2}$ is an optimal set of two-means with $a_1<a_2$, then $a_1=E(X : X \in [0, \frac 12])$ and $a_2=E(X : X \in [\frac 12, 1])$, and so
%%\begin{align*}
%%a_1&=E(X : X \in J_1\uu [\frac 25, \frac 12])=\frac{1}{P(J_1\uu [\frac 25, \frac 12])}\Big(\int_{J_1} x dP+\int_{[\frac 25, \frac 12]}x dP\Big)\\
%%&=2\Big(\frac 13 S_1(\frac 12) +\frac 13 \int_{[\frac 25, \frac 12]} 5x dx\Big)=\frac{13}{60}, \te{ and } \\
%%a_2&=E(X : X \in [\frac 12, \frac 35]\uu J_2)=\frac{1}{P([\frac 12, \frac 35]\uu J_2)}\Big(\int_{[\frac 12, \frac 35]}x dP+\int_{J_2} x dP\Big)\\
%%&=2\Big(\frac 13 \int_{[\frac 12, \frac 35]} 5x dx+\frac 13 S_2(\frac 12)\Big)=\frac{47}{60}.
%%\end{align*}
%%The  corresponding quantization error is given by
%%\begin{align*}
%%&V_2=\int\min_{a\in \ga} (x-a)^2 dP\\
%%&=\int_{J_1}(x-\frac {13}{60})^2 dP+\int_{J_2}(x-\frac {47}{60})^2 dP+\int_{[\frac 25, \frac 12]} (x-\frac {13}{60})^2 dP+\int_{[\frac 12, \frac 35]} (x-\frac{47}{60})^2 dP\\
%%&=\frac 13\Big(\frac 2{25} V+(S_1(\frac 12)-\frac {13}{60})^2+(S_2(\frac 12)-\frac {47}{60})^2\Big) +\frac 13\Big(\int_{[\frac 25, \frac 12]} 5 (x-\frac{13}{60})^2  dx+\int_{[\frac{1}{2}, \frac{3}{5}]} 5 (x-\frac{47}{60}\Big)^2  dx\Big)\\
%%&=\frac{8003}{262800}=0.0304528.
%%\end{align*}
%%Thus, the proof of the lemma is complete.
%%\end{proof}

\begin{lemma1}\label{lemma012} Let $\go \in I^k$ for any $k\geq 0$. Then,
\[\int_{J_{\go 1}\uu S_{\go}[\frac 25, \frac 12]}(x-S_{\go}(\frac{13}{60}))^2 dP=\frac {1}{75^{k}} \cdot \frac 12 V_2=\int_{J_{\go 2}\uu S_{\go}[\frac 12, \frac 35]}(x-S_{\go}(\frac{47}{60}))^2 dP.\]
\end{lemma1}
%%\begin{proof} Since $\set{\frac {13}{60}, \frac{47}{60}}$ is an optimal set of two-means. Due to symmetry of the probability measure $P$, we have
%%\[\int_{J_1\uu [\frac 25, \frac 12]}(x-\frac{13}{60})^2 dP=\int_{[\frac 12, \frac 35]\uu J_2}(x-\frac{47}{60})^2 dP=\frac 12 V_2.\]
%%which for any $\go \in I^k$, $k\geq 0$, implies
%%\begin{align*}
%%&\int_{J_{\go 1}\uu S_{\go}[\frac 25, \frac 12]}(x-S_{\go}(\frac{13}{60}))^2 dP=\frac 1{3^k} \int_{J_{\go 1}\uu S_{\go}[\frac 25, \frac 12]}(x-S_{\go}(\frac{13}{60}))^2 d(P\circ S_\go^{-1})\\
%%&=\frac 1{3^k}\int_{J_1\uu [\frac 25, \frac 12]}(S_\go(x)-S_{\go}(\frac{13}{60}))^2 dP=\frac 1{3^k}\cdot \frac 1{25^k} \int_{J_1\uu [\frac 25, \frac 12]}(x-\frac{13}{60})^2 dP= \frac {1}{75^{k}} \cdot \frac 12 V_2.
%%\end{align*}
%%Similarly, $\int_{J_{\go 2}\uu S_{\go}[\frac 12, \frac 35]}(x-S_{\go}(\frac{47}{60}))^2 dP= \frac {1}{75^{k}} \cdot \frac 12 V_2$. Thus, the proof of the lemma is complete.
%%\end{proof}

\begin{cor} \label{cor2}
Let $\go \in I^k$ for  $k\geq 0$. Then, for any $a\in \D R$,
\begin{align*}
&\int_{J_{\go 1}\uu S_{\go}([\frac 25, \frac 12])}(x-a)^2 dP=\frac 1{3^k}\Big(\frac 1{25^k} \frac 12 V_2+\frac 12(S_\go(\frac{13}{60})-a)^2\Big), \te{ and } \\
&\int_{S_{\go}([\frac 12, \frac 35])\uu J_{\go 2}}(x-a)^2 dP=\frac 1{3^k}\Big(\frac 1{25^k} \frac 12 V_2+\frac 12(\frac{47}{60})-a)^2\Big).
\end{align*}
\end{cor}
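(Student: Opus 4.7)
The plan is to derive this from Lemma~\ref{lemma012} via the elementary parallel-axis identity
\[
\int_A (x-a)^2\,dP \;=\; \int_A (x-c)^2\,dP \;+\; P(A)\,(c-a)^2,
\]
valid for any measurable $A$ with $P(A)>0$, where $c:=\frac{1}{P(A)}\int_A x\,dP$ is the $P$-centroid of $A$. (Expand $(x-a)^2=(x-c)^2+2(x-c)(c-a)+(c-a)^2$; the cross term integrates to $0$ by the definition of $c$.) Applied with $A_\go := J_{\go 1}\uu S_\go([\tfrac{2}{5},\tfrac{1}{2}])$, the corollary reduces to computing $P(A_\go)$ and $c_\go$, after which Lemma~\ref{lemma012} directly supplies $\int_{A_\go}(x-c_\go)^2\,dP=\tfrac{1}{75^k}\cdot\tfrac{V_2}{2}$.

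For the mass, equation~\eqref{eq020} gives $P(J_{\go 1})=\tfrac{1}{3^{k+1}}$, and since $\gn$ is uniform on $[\tfrac{2}{5},\tfrac{3}{5}]$, Lemma~\ref{lemma1} combined with the SSC (so that $S_\go(C)$ is disjoint from every sub-cell of the form $J_{\go'}$ with $|\go'|>|\go|$) yields $P(S_\go([\tfrac{2}{5},\tfrac{1}{2}]))=\tfrac{1}{3^{k+1}}\cdot\tfrac{1}{2}$. Summing, $P(A_\go)=\tfrac{1}{2\cdot 3^k}$, which matches the coefficient $\tfrac{1}{3^k}\cdot\tfrac{1}{2}$ in front of the $(c_\go-a)^2$ term in the statement. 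For the centroid, Lemma~\ref{lemma1} and the affinity of $S_\go$ give $E(X\mid X\in J_{\go 1})=S_{\go 1}(\tfrac{1}{2})=S_\go(\tfrac{1}{10})$ and $E(X\mid X\in S_\go([\tfrac{2}{5},\tfrac{1}{2}]))=S_\go(\tfrac{9}{20})$ (the midpoint of $[\tfrac{2}{5},\tfrac{1}{2}]$ is $\tfrac{9}{20}$). Weighting by the respective conditional probabilities $\tfrac{2}{3}$ and $\tfrac{1}{3}$ and using that $S_\go$ is affine,
\[
c_\go \;=\; \tfrac{2}{3}\,S_\go\!\left(\tfrac{1}{10}\right) + \tfrac{1}{3}\,S_\go\!\left(\tfrac{9}{20}\right) \;=\; S_\go\!\left(\tfrac{2}{3}\cdot\tfrac{1}{10}+\tfrac{1}{3}\cdot\tfrac{9}{20}\right) \;=\; S_\go\!\left(\tfrac{13}{60}\right),
\]
consistent with Proposition~\ref{prop41} (the case $\go=\es$). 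Plugging $c_\go$, $P(A_\go)$, and the Lemma~\ref{lemma012} value into the parallel-axis identity produces the first equation.

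The second equation follows by the same argument applied to $J_{\go 2}\uu S_\go([\tfrac{1}{2},\tfrac{3}{5}])$: the mass is again $\tfrac{1}{2\cdot 3^k}$; the two conditional expectations become $S_\go(\tfrac{9}{10})$ and $S_\go(\tfrac{11}{20})$, and the weighted average collapses to $S_\go\!\bigl(\tfrac{2}{3}\cdot\tfrac{9}{10}+\tfrac{1}{3}\cdot\tfrac{11}{20}\bigr)=S_\go(\tfrac{47}{60})$. The only step with any content is the centroid identification, but thanks to the affinity of $S_\go$ it reduces transparently to the base case of Proposition~\ref{prop41}, so no genuine obstacle is expected.
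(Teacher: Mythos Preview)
Your argument is correct and is exactly the intended derivation: the paper states Corollary~\ref{cor2} without proof as an immediate consequence of Lemma~\ref{lemma012}, and the parallel-axis identity you invoke is precisely the mechanism by which the distortion around the centroid $S_\go(\tfrac{13}{60})$ (supplied by Lemma~\ref{lemma012}) extends to distortion around an arbitrary point $a$. Your explicit computations of the mass $P(A_\go)=\tfrac{1}{2\cdot 3^k}$ and the centroid $c_\go=S_\go(\tfrac{13}{60})$ fill in the details the paper omits, and they are all accurate.
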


\begin{propo}\label{prop51}
Let $\ga:=\set{a_1, a_2, a_3}$ be an optimal set of three-means with $a_1<a_2<a_3$. Then, $a_1=S_1(\frac 12)=\frac 1{10}$, $a_2=\frac 12$, and $a_3=S_2(\frac 12)=\frac 9{10}$. The corresponding quantization error is $V_3=\frac{89}{21900}=0.00406393$.
\end{propo}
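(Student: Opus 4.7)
\textbf{Proof plan for Proposition~\ref{prop51}.}
The strategy mirrors Proposition~\ref{prop00002} (the discrete‑distribution analogue): exhibit a good candidate set of three points, compute its distortion to obtain an upper bound on $V_3$, then rule out all other configurations by case analysis using the explicit formulas \eqref{eq2}.

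First, I would evaluate the distortion produced by the candidate $\beta:=\set{S_1(\frac12),\,\tfrac12,\,S_2(\tfrac12)}=\set{\tfrac1{10},\tfrac12,\tfrac9{10}}$. By the strong separation condition, the Voronoi regions of $\tfrac1{10}$, $\tfrac12$, $\tfrac9{10}$ with respect to $\beta$ contain $J_1$, $C$, $J_2$ respectively, so
\[
\int\min_{b\in\beta}(x-b)^2\,dP=\int_{J_1}(x-\tfrac1{10})^2\,dP+\int_{C}(x-\tfrac12)^2\,dP+\int_{J_2}(x-\tfrac9{10})^2\,dP.
\]
Using \eqref{eq2} this equals $\tfrac{1}{75}V+\tfrac13 W+\tfrac{1}{75}V=\tfrac{89}{21900}$, so $V_3\le\tfrac{89}{21900}$.

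Next, let $\ga=\set{c_1<c_2<c_3}$ be any optimal set of three‑means; by Proposition~\ref{prop1000} each $c_i$ is the expectation of its own Voronoi region, so $0<c_1<c_2<c_3<1$. I would then rule out wrong configurations in three steps. (i) If $c_1\ge\tfrac15$, then $V_3\ge\int_{J_1}(x-\tfrac15)^2\,dP$, which by \eqref{eq2} exceeds $\tfrac{89}{21900}$; hence $c_1<\tfrac15$, and symmetrically $c_3>\tfrac45$. (ii) Assume for contradiction that the Voronoi region of $c_1$ meets $C$; then the midpoint $\tfrac12(c_1+c_2)\ge\tfrac25$ forces a lower bound on $V_3$ of the form $\int_{J_1}(x-c_1)^2\,dP+\tfrac13\int_{\frac25}^{m}5(x-c_1)^2\,dx$ with $m\ge\tfrac25$, which upon minimising in $c_1\in(0,\tfrac15)$ is still larger than $\tfrac{89}{21900}$; symmetrically for $c_3$. (iii) Hence all of $C$ lies in the Voronoi region of $c_2$, i.e.\ $\tfrac12(c_1+c_2)\le\tfrac25$ and $\tfrac12(c_2+c_3)\ge\tfrac35$. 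Since also $c_1<\tfrac15$ and $c_3>\tfrac45$, the Voronoi region of $c_2$ is exactly $C$, and hence $c_2=E(X\mid X\in C)=\tfrac12$. Then the Voronoi region of $c_1$ is $J_1$ and of $c_3$ is $J_2$, giving $c_1=S_1(\tfrac12)=\tfrac1{10}$ and $c_3=S_2(\tfrac12)=\tfrac9{10}$.

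Finally, plugging $\ga=\set{\tfrac1{10},\tfrac12,\tfrac9{10}}$ into the decomposition of the distortion and invoking \eqref{eq2} and Lemma~\ref{lemma22} gives $V_3=\tfrac{2V}{75}+\tfrac{W}{3}=\tfrac{89}{21900}$, as required.

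The routine upper‑bound computation and the expected‑value step are straightforward given \eqref{eq2} and Proposition~\ref{prop1000}. The main obstacle is step~(ii): verifying, via explicit numerical inequalities, that no Voronoi region of $c_1$ or $c_3$ can protrude into $C$. Unlike the discrete case in Proposition~\ref{prop00002}, the mass of $\nu$ near the boundary of $C$ is spread out, so the threshold arguments must be chosen carefully (minimising in $c_1$ and using $\nu\lfloor C$ to integrate $(x-c_1)^2$ exactly over subintervals of $C$), but each sub‑case reduces to a one‑variable quadratic minimisation that is comfortably bounded below by $\tfrac{89}{21900}$.
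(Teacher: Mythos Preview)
Your outline follows the right template, but step~(ii) does not work as written. The lower bound you propose for $V_3$ when the Voronoi region of $c_1$ meets $C$, namely
\[
\int_{J_1}(x-c_1)^2\,dP+\frac13\int_{2/5}^{m}5(x-c_1)^2\,dx\qquad (m=\tfrac12(c_1+c_2)\ge\tfrac25),
\]
is \emph{not} bounded below by $\tfrac{89}{21900}$: let $m\to\tfrac25^+$ and minimise over $c_1\in(0,\tfrac15)$ to get $\tfrac1{75}V=\tfrac{97}{65700}\approx0.00148$, which is well below $0.00406$. The point is that you have used only the distortion in the Voronoi region of $c_1$; if that region barely touches $C$ the cost there is small, and the real obstruction lies elsewhere. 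Step~(iii) also has a gap: from $\tfrac12(c_1+c_2)\le\tfrac25$, $\tfrac12(c_2+c_3)\ge\tfrac35$, $c_1<\tfrac15$, $c_3>\tfrac45$ you cannot yet conclude that the Voronoi region of $c_2$ meets the support only in $C$; for that you also need $\tfrac12(c_1+c_2)\ge\tfrac15$ and $\tfrac12(c_2+c_3)\le\tfrac45$.

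The paper fixes both problems in one stroke by inserting, immediately after $c_1<\tfrac15$ and $c_3>\tfrac45$, the direct bound $\tfrac25<c_2<\tfrac35$: if $c_2\le\tfrac25$ then every $x\in C$ is at distance $\ge x-\tfrac25$ from $\alpha$ (since $a_1<a_2\le\tfrac25$ and $a_3>\tfrac45$ forces $|x-a_3|\ge\tfrac15\ge x-\tfrac25$), so $V_3\ge\int_C(x-\tfrac25)^2\,dP=\tfrac1{225}>\tfrac{89}{21900}$; symmetrically $c_2<\tfrac35$. Once $\tfrac25<c_2<\tfrac35$ is in hand, both of your remaining steps become one-line midpoint arguments: $\tfrac12(c_1+c_2)\ge\tfrac25$ would force $c_2>\tfrac35$, and $\tfrac12(c_1+c_2)<\tfrac15$ would force $c_2<\tfrac25$, both impossible. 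So replace your step~(ii) minimisation by this prior localisation of $c_2$, and the rest of your plan goes through.
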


\begin{proof} Let $\gb:=\set{\frac 1{10}, \frac 12, \frac 9{10}}$. Then, using \eqref{eq2}, we have
\begin{align*}
&\int\min_{a\in \gb} (x-a)^2 dP=\int_{J_1}(x-\frac {1}{10})^2 dP+\int_{C}(x-\frac 12)^2 dP+\int_{J_2}(x-\frac 9{10})^2 dP\\
&=\frac 13\Big(\frac 1{25} V+\frac 13 W+\frac 1{25} V\Big)=\frac{89}{21900}=0.00406393.
\end{align*}
Since $V_3$ is the quantization error for three-means, we have $0.00406393\geq V_3$.
 Let $\ga:=\{a_1, a_2, a_3\}$ be an optimal set of three-means with $a_1<a_2<a_3$. Since $a_1, a_2$, and $a_3$ are the centroids of their own Voronoi regions, we have $0 <a_1<a_2<a_3< 1$. Suppose that $\frac 15 \leq a_1$. Then,
 \[V_3\geq \int_{J_1}(x-\frac 15)^2 dP=\frac{79}{16425}=0.00480974>V_3,\]
 which is a contradiction. So, we can assume that $a_1<\frac 15$. Similarly, we have $\frac 45 <a_3$. Suppose that $a_2\leq \frac 25$. Then,
 \[V_3\geq \int_{C}(x-\frac 25)^2 dP=\frac{1}{3}(W+(\frac{1}{2}-\frac{2}{5})^2)=\frac{1}{225}=0.00444444>V_3,\]
 which is a contradiction. So, we can assume that $\frac 25<a_2$. Similarly, $a_2<\frac 35$. Thus, we have $\frac 25<a_2<\frac 35$. We now show that the Voronoi region of $a_1$ does not contain any point from $C$. Suppose that $\frac 12(a_1+a_2)\geq \frac 25$. Then, $a_2\geq \frac 45-a_1>\frac 45-\frac 15=\frac 35$, which is a contradiction, and so the Voronoi region of $a_1$ does not contain any point from $C$. Similarly, the Voronoi region of $a_3$ does not contain any point from $C$. In the similar fashion, we can show that the Voronoi region of $a_2$ does not contain any point from $J_1$ and $J_2$. Hence,
 \[a_1=S_1(\frac 12)=\frac 1{10}, \, a_2=E(X : X\in C)=\frac 12, \te{ and } a_3=S_2(\frac 12)=\frac 9{10},\]
 and the corresponding quantization error is $V_3=\frac{89}{21900}=0.00406393$. Thus, the proof of the lemma is complete.
\end{proof}

\begin{lemma1}  \label{lemma61} Let $\ga$ be an optimal set of four-means. Then, $\ga\ii J_1\neq \es$, $\ga \ii J_2\neq \es$, and $\ga\ii C\neq \es$. Moreover, $\ga$ does not contain any point from the open intervals $(\frac 15, \frac 25)$ and $(\frac 35, \frac 45)$.
\end{lemma1}

\begin{proof} Let us first consider a set of four points $\gb:=\set{S_1(\frac {13}{60}), S_1(\frac {47}{60}), \frac 12, S_2(\frac 12)}$. Then, using Lemma~\ref{lemma012},
 \begin{align*}
& \int \min_{a\in \gb}(x-a)^2 dP =2\int_{J_{11}\uu S_1[\frac 25,\frac 12]}(x-S_{1}(\frac {13}{60}))^2 dP+\int_C(x-\frac 12)^2 dP   +\int_{J_{2}}(x-S_{2}(\frac {1}{2}))^2 dP\\
&=\frac {1}{75}  V_2+\frac 13 W+\frac 1{75} V=\frac{59003}{19710000}=0.00299356.
\end{align*}
Since $V_4$ is the quantization error for four-means, we have $V_4\leq 0.00299356$. Let $\ga:=\set{a_1<a_2<a_3<a_4}$ be an optimal set of four-means. Proceeding in the similar way as shown in the proof of Proposition~\ref{prop51}, we have $0<a_1<\frac 15$ and $\frac 45<a_4<1$ implying $\ga \ii J_1\neq\es$ and $\ga \ii J_2\neq \es$. We now show that $\ga \ii C\neq \es$. For the sake of contradiction, assume that $\ga \ii C=\es$. Then, if $a_3<\frac 25$, we have
\begin{align*}
V_4&\geq \int_C(x-\frac 25)^2dP+\int_{J_2}(x-S_2(\frac 12))^2 dP=\frac{1}{225}+\frac 1{75} V=\frac{389}{65700}=0.00592085>V_4,
\end{align*}
which leads to a contradiction. Hence, $\frac 35<a_3$.
Suppose that $a_2\leq \frac 7{20}$. Then, due to symmetry, we can assume that $\frac {13}{20}\leq a_3$, and so
\begin{align*}
V_4&\geq \int_{[\frac 2 5, \frac 12]} (x-\frac 7{20})^2dP+\int_{[\frac 12, \frac 35]} (x-\frac {13}{20})^2dP=\frac{2}{3} \int_{[\frac{2}{5}, \frac 12]} (x-\frac{7}{20})^2  d\gn=\frac{13}{3600}=0.00361111>V_4,
\end{align*}
which gives a contradiction. So, we can assume that $\frac 7{20}<a_2<\frac 25<\frac 35<a_3<\frac {13}{20}$. Then, $\frac 12(a_1+a_2)<\frac 15$ and $\frac 12(a_3+a_4)>\frac 45$ yielding $a_1<\frac 1{20}$ and $a_4>\frac {19}{20}$; otherwise, the quantization error can strictly be reduced by moving $a_2$ to $\frac 25$ and $a_3$ to $\frac 35$ contradicting the fact that $\ga$ is an optimal set of four-means. Then,
\begin{align*}
V_4&\geq 2\Big(\int_{J_{12}}(x-\frac 1{20})^2 dP+\int_{[\frac 25, \frac 12]}(x-\frac 25)^2dP\Big)=\frac{48349}{9855000}=0.00490604>V_4,
\end{align*}
which is a contraction. Hence, $\ga\ii C\neq \es$. Thus, we see that $\ga\ii J_1\neq \es$, $\ga \ii J_2\neq \es$, and  $\ga\ii C\neq \es$. To complete the rest of the proof, for the sake of contradiction, assume that $\ga$ contains a point from the open interval $(\frac 15, \frac 25)$. Then, $\ga$ can not contain any point from $(\frac 35, \frac 45)$ because $\te{card}(\ga)=4$, and $\ga\ii J_1\neq \es$, $\ga \ii J_2\neq \es$, and  $\ga\ii C\neq \es$. Assume that $\ga$ contains $a_2$ from $(\frac 15, \frac 25)$. Then, $0<a_1<\frac 15<a_2<\frac 25<a_3<\frac 35<\frac 45<a_4<1$, and so, the Voronoi region of $a_4$ does not contain any point from $C$, and the Voronoi region of $a_3$ does not contain any point from $J_2$.
Under the assumption $a_2 \in (\frac 15, \frac 25)$ the following three cases can arise:

Case 1:  $\frac 1 3 \leq a_2<\frac 25$.

Then,
\begin{align} \label{eq001} & \min_{a\in \set{a_2, a_3}} \left\{\int_C (x - a)^2 dP : \frac{1}{3}\leq a_2< \frac{2}{5} \te{ and } \frac{2}{5}\leq a_3\leq \frac{3}{5}\right\} \notag \\
&=\min_{a\in \set{a_2, a_3}} \left\{\frac{1}{3} \Big(\int_{\frac{2}{5}}^{\frac{a_2+a_3}{2}} 5 (x-a_2)^2  dx+\int_{\frac{a_2+a_3}{2}}^{\frac{3}{5}} 5 (x-a_3)^2  dx\Big) : \frac{1}{3}\leq a_2< \frac{2}{5}, \, \frac{2}{5}\leq a_3\leq \frac{3}{5} \right\}  \\
&= \frac{1}{2025} \, (\te{when } a_2=\frac{2}{5} \te{ and } a_3=\frac{8}{15}). \notag
\end{align}
Again, $\frac 12(a_1+a_2)<\frac 15$ implies $a_1<\frac 25-a_2\leq \frac 25-\frac 13=\frac 1{15}<\frac 1{10}=S_1(\frac 12)$, and so
\[V_4\geq \int_{S_1[\frac 12, \frac 35]\uu J_{12}}(x-\frac 1{15})dP+ \frac{1}{2025}+\int_{J_2}(x-S_2(\frac 12))^2 dP=\frac{20833}{5913000}=0.00352325>V_4,\]
which leads to a contradiction.

Case 2:  $\frac 4{15} \leq a_2\leq \frac 13$.

Then, proceeding as \eqref{eq001}, we have  \begin{align*} \min_{a\in \set{a_2, a_3}} \left\{\int_C (x - a)^2 dP : \frac 4{15} \leq a_2\leq \frac 13, \,  \frac{2}{5}\leq a_3\leq \frac{3}{5}, \te{ and }  \frac 25\leq \frac{a_2+a_3}{2}\right\}
=\frac{11}{10935}
\end{align*}
which occurs when $a_2=\frac 13$ and $a_3=\frac{23}{45}$. Moreover, $\frac 12(a_1+a_2)<\frac 15$ implying $a_1<\frac 25-a_2\leq \frac 25-\frac 4{15}=\frac 2{15}$.
First, assume that $\frac 1{15} \leq a_1<\frac 2{15}$.
Then,
\begin{align*}
V_4\geq \int_{J_{11}}(x-\frac 1{15})^2 dP+\int_{C_1}(x-S_1(\frac 12))^2 dP+\int_{J_{12}}(x-\frac 2{15})^2 dP+\frac{11}{10935}+\frac 1{75} V=\frac{608693}{199563750},
\end{align*}
implying $V_4\geq\frac{608693}{199563750}=0.00305012>V_4$, which yields a contradiction.
Next, assume that $0< a_1<\frac 1{15}$. Then, for any $x\in J_{12}$, we have
\begin{equation*} \mathop{\min}\limits_{a\in \set{a_1, a_2}}(x-a)^2 \geq (x-\frac 1{30})^2,
\end{equation*}
because $a_2-x \geq \frac 4{15}-x\geq  \frac 1{15}\geq x-\frac 1{30}\geq \frac 2{75}>0$, and $x-a_1\geq x- \frac 1{15}\geq \frac{7}{75}> x-\frac 1{30}\geq \frac 2{75}>0$. Thus, we have
\begin{align*}
V_4\geq \int_{J_{12}}(x-\frac 1{30})^2 dP+\frac{11}{10935}+\frac 1{75} V=\frac{488149}{99781875}=0.00489216>V_4,
\end{align*}
which leads to another contradiction. Thus, Case 2 gives a contradiction.

Case 3:  $\frac 1{5}< a_2\leq \frac 4{15}$.

Then, proceeding as \eqref{eq001}, we have  \begin{align*} \min_{a\in \set{a_2, a_3}} \left\{\int_C (x - a)^2 dP : \frac 1{5}< a_2\leq \frac 4{15}, \,  \frac{2}{5}\leq a_3\leq \frac{3}{5}, \te{ and }   \frac 25\leq \frac{a_2+a_3}{2}\right\}
=\frac{1}{675},
\end{align*}
which occurs when $a_2=\frac 4{15}$ and $a_3=\frac{8}{15}$. Moreover,
\begin{align*}  & \min_{a\in \set{a_1, a_2}} \left\{\int_{J_1} (x - a)^2 dP : 0<a_1<\frac 15<a_2< \frac{2}{5} \right\}\geq \int_{J_1}\min_{a\in S_1(\ga_2)}(x-a)^2 dP=\frac 1{75} V_2.
\end{align*}
Thus, we see that
\begin{align*}
V_4\geq\frac 1{75} V_2 +\frac{1}{675}+\frac 1{75} V=\frac{7367}{2190000}=0.00336393>V_4,
\end{align*}
which is a contradiction.

Therefore, we can assume that $\ga$ does not contain any point from $(\frac 15, \frac 25)$. Reflecting the situation with respect to the point $\frac 12$, we can also say that $\ga$ does not contain any point from the open interval  $(\frac 35, \frac 45)$. Thus the proof of the lemma is complete.
\end{proof}

\begin{remark1} \label{remark35}
By Lemma~\ref{lemma61}, it is easy to see that the sets $\set{S_1(\frac {13}{60}), S_1(\frac {47}{60}), \frac 12, S_2(\frac 12)}$, and $\set{S_1(\frac 12), \frac 12, S_2(\frac {13}{60}), S_2(\frac {47}{60})}$ form two different optimal sets of four-means.
\end{remark1}

\begin{propo} \label{prop11}
Let $\ga$ be an optimal set of $n$-means with $n\geq 3$. Then, $\ga\ii J_1\neq \es$, $\ga \ii J_2\neq \es$, and $\ga\ii C\neq \es$.
\end{propo}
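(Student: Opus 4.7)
My plan is to prove each of the three non-emptiness claims by contradiction. The chief upper bound is $V_n \le V_3 = 89/21900 \approx 0.00406$ for every $n \ge 3$, which follows from the monotonicity of quantization errors together with Proposition~\ref{prop51}. For larger $n$ a sharper bound is available from the trial set $\{S_1(1/2), S_2(1/2)\}\cup \ga_{n-2}(\gn)$, of distortion $\tfrac{2V}{75} + \tfrac{W}{3(n-2)^2}$ by \eqref{eq2} and Lemma~\ref{lemma0021}, but the blunt bound $V_n \le V_3$ suffices for the present proposition.

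For $\ga \cap J_1 \neq \es$: suppose $\min \ga \ge 1/5$. Then \eqref{eq2} with $\go = 1$ and $x_0 = 1/5$ yields
\[
V_n \;\ge\; \int_{J_1}(x-1/5)^2\,dP \;=\; \frac{1}{3}\!\left(\frac{V}{25}+\frac{1}{100}\right) \;=\; \frac{79}{16425} \;>\; V_3,
\]
a contradiction. The claim $\ga \cap J_2 \neq \es$ is the mirror image about $\tfrac{1}{2}$, since the condensation system is symmetric under $x \mapsto 1-x$.

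The substantive step is $\ga \cap C \neq \es$. Suppose $\ga \cap C = \es$. By the two claims just proved, both $c^- := \max(\ga \cap [0, 2/5))$ and $c^+ := \min(\ga \cap (3/5, 1])$ exist, and for $x \in C$ the nearest element of $\ga$ is either $c^-$ or $c^+$, so
\[
\int_C \min_{a \in \ga}(x-a)^2\,dP \;=\; \int_C \min\bigl((x-c^-)^2,(c^+-x)^2\bigr)\,dP.
\]
I would split into cases by the positions of $c^-$ and $c^+$. When $c^- \le 1/5$ and $c^+ \ge 4/5$, the pointwise comparisons $(x-c^-)^2 \ge (x-1/5)^2$ and $(c^+-x)^2 \ge (4/5-x)^2$ yield $\int_C \ge 2\int_{[2/5,1/2]}(x-1/5)^2\,dP = 19/900 > V_3$. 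When $c^- \le 1/5$ and $c^+ \in (3/5, 4/5)$, the inequality $(c^+-x)^2 \ge (3/5-x)^2$ on $C$ together with a direct computation gives $\int_C \ge \int_C (3/5-x)^2\,dP = 1/225 > V_3$, and the symmetric case is analogous. The delicate residual case is $c^- \in (1/5, 2/5)$ and $c^+ \in (3/5, 4/5)$, where the bare estimate $\int_C \ge 1/900$ is too weak. Here I would invoke the centroid condition of Proposition~\ref{prop1000}: because $P$ has no mass on $(1/5, 2/5) \cup (3/5, 4/5)$, the Voronoi region of $c^-$ can have $c^-$ as its centroid only by balancing a piece of mass in $J_1$ against a piece of mass in $C$. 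Subdividing on ranges such as $c^- \in [3/10, 2/5)$ versus $c^- \in (1/5, 3/10)$ and tracking the forced contribution to $V_n$ from either $J_1$ or $C$ (as done in the proof of Lemma~\ref{lemma61}) produces in each sub-range a total distortion strictly exceeding $V_3$.

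The main obstacle is this final sub-case analysis: every admissible joint placement of $(c^-, c^+)$ in the zero-mass gap intervals must be eliminated via the centroid condition, and the required integrals are somewhat cumbersome. The saving grace is that all the relevant thresholds are numerical constants independent of $n$, so a single case sweep disposes of every $n \ge 3$ uniformly. An attractive alternative worth trying in parallel is a direct one-point exchange: replace a carefully chosen element of $\ga$ by $\tfrac{1}{2} \in C$ and verify that the distortion strictly decreases, immediately contradicting the optimality of $\ga$.
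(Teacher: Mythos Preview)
Your overall architecture---show $\ga\cap J_1\neq\es$ via $\int_{J_1}(x-\tfrac15)^2\,dP>V_3$, then attack $\ga\cap C\neq\es$ by case analysis on the flanking points $c^-,c^+$---matches the paper's strategy. The first two claims and your ``easy'' sub-cases for the third are fine.

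The gap is your assertion that ``the blunt bound $V_n\le V_3$ suffices.'' It does not, precisely in the residual sub-case you flag as delicate. When $c^-\in(3/10,2/5)$ and $c^+\in(3/5,7/10)$, the centroid condition forces the adjacent points into $[0,1/10]$ and $[9/10,1]$, and the resulting lower bound is
\[
\int_{J_{12}}\!\Big(x-\tfrac1{10}\Big)^2dP+\int_{[\frac25,\frac12]}\!\Big(x-\tfrac25\Big)^2dP+\int_{[\frac12,\frac35]}\!\Big(x-\tfrac35\Big)^2dP+\int_{J_{21}}\!\Big(x-\tfrac9{10}\Big)^2dP\;=\;\frac{12677}{4927500}\approx 0.00257,
\]
which is \emph{below} $V_3\approx 0.00406$. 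The companion sub-case $c^-\le 3/10$ gives $\int_C\ge 23/7200\approx 0.00319$, again below $V_3$. So no amount of further sub-division will push the distortion above $V_3$ in these configurations; the ceiling you are comparing against is simply too high.

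The paper's remedy is exactly the sharper bound you mention and then dismiss: it treats $n=3,4$ by the already-proved Proposition~\ref{prop51} and Lemma~\ref{lemma61}, and for $n\ge 5$ uses the trial set $S_1(\ga_2)\cup\{1/2\}\cup S_2(\ga_2)$ to get $V_n\le V_5=\tfrac{1}{75}V_2+\tfrac13 W\approx 0.00152$, which the displayed lower bounds comfortably exceed. Your own trial set $\{S_1(1/2),S_2(1/2)\}\cup\ga_{n-2}(\gn)$ gives only $\tfrac{2V}{75}+\tfrac{W}{3(n-2)^2}\approx 0.00308$ at $n=5$, which is still not small enough to clear $0.00257$; you need the $V_2$-based set. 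So the fix is to abandon the uniform $V_3$ bound and split on $n$, invoking the sharper $V_5$ bound for $n\ge 5$.
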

\begin{proof}
Due to Proposition~\ref{prop51} and Lemma~\ref{lemma61}, the proposition is true for $n=3$ and $n=4$. Let $\ga:=\set{a_1, a_2, \cdots, a_n}$ be an optimal set of $n$-means for $n\geq 5$, where $0<a_1<a_2<\cdots<a_n<1$. Proceeding in the similar way as shown in the proof of Proposition~\ref{prop51}, we see that $0<a_1<\frac 15$, and $\frac 45<a_n<1$ yielding $\ga\ii J_1\neq \es$ and $\ga\ii J_2\neq \es$. Consider a set of five points $\gb:=S_1(\ga_2)\uu \set{\frac 12}\uu S_2(\ga_2)$, where $\ga_2$ is an optimal set of two-means for $P$.
Then, using Lemma~\ref{lemma012}, and the symmetry of $P$, we obtain the distortion error for the set $\gb$ as
\begin{align} \label{eq451}
& \int \min_{a\in \gb}(x-a)^2 dP \notag =2\int_{J_{11}\uu S_1[\frac 25, \frac 12]}(x-S_{1}(\frac {13}{60}))^2 dP +\int_C(x-\frac 12)^2 dP  \\
&= \frac {1}{75} V_2+\frac 13 W=\frac{29903}{19710000}=0.00151715.
\end{align}
Since $V_n$ is the quantization error for $n$-means with $n\geq 5$, we have $V_n\leq 0.00151715$.  Notice that for any $a\in \ga$, since $P(M(a|\ga)>0$, there can not be more than one point from $\ga$ in any of the open intervals $(\frac 15, \frac 25)$ and $(\frac 35, \frac 45)$. We now show that $\ga \ii C\neq \es$. On the contrary assume that $\ga\ii C=\es$. Let $j=\max\set{i : a_i< \frac 25 \te{ and } 1\leq i\leq n}$. Then, $a_j< \frac 25$ and $\frac 35< a_{j+1}$. If $a_j\leq \frac 12(\frac 15+\frac 25)=\frac 3{10}$, then
\begin{align*}
& \int \min_{a\in \ga}(x-a)^2 dP\geq  \int_C \min_{a\in \ga}(x-a)^2 dP=\frac 13\int_C\min_{a\in \set{a_j, a_{j+1}}}(x-a)^2 d\gn\\
&\geq \frac 13\int_{[\frac 2 5, \frac 9{20}]}(x-\frac 3{10})^2 d\gn+\frac 13\int_{[\frac 9{20}, \frac 35]}(x-\frac 3 5)^2 d\gn=\frac{23}{7200}=0.00319444>V_n,
\end{align*}
which yields a contradiction. Similarly, if $a_{j+1}\geq \frac 7{10}$ a contradiction will arise. So, we can assume that $\frac 3{10}<a_j<\frac 25<\frac 35<a_{j+1}< \frac 7{10}$. Then, $\frac 12 (a_{j-1}+a_j)<\frac 15$ implying $a_{j-1}<\frac 25-a_j\leq \frac 25-\frac 3{10}=\frac 1{10}$, and similarly $\frac 12(a_{j+1}+a_{j+2})>\frac 45$ implies $a_{j+2}\geq \frac 9{10}$. Thus, we see that
\begin{align*}
&V_n \geq \int_{J_{12}} (x-\frac 1{10})^2dP+\int_{[\frac 2{5}, \frac 12]}(x-\frac 25)^2 dP+\int_{[\frac 12, \frac 35]}(x-\frac 35)^2 dP+\int_{J_{21}} (x-\frac 9{10})^2dP,
\end{align*}
which after simplification yields $V_n\geq \frac{12677}{4927500}=0.0025727>V_n$, which is a contradiction. All these contradictions arise due to our assumption $\ga\ii C=\es$. Hence, we can conclude that $\ga\ii C\neq \es$. This completes the proof of the proposition.
\end{proof}

\begin{lemma1}\label{lemma71}
Let $\ga$ be an optimal set of five-means. Then, $\ga$ does not contain any point from $(\frac 1 5, \frac 25)\uu (\frac 35, \frac 45)$.
\end{lemma1}

\begin{proof} Let $\ga:=\set{a_1<a_2<a_3<a_4<a_5}$ be an optimal set of five-means. By the expression \eqref{eq451}, we have $V_5 \leq 0.00151715$. Proposition~\ref{prop11} says that $\ga$ contains points from $J_1$, $C$ and $J_2$. Suppose that $\ga$ contains a point from $(\frac 15, \frac 25)$.
Then, two cases can arise:

Case 1. $\frac 3{10}\leq a_2<\frac 25$.

Due to symmetry of $P$ about $\frac 12$, we can assume that $a_3=\frac 12$ and $\frac 35<a_4\leq \frac 7{10}$. Then, $\frac 12(a_1+a_2)<\frac 15$ and $\frac 45<\frac 12(a_4+a_5)$ yielding $a_1<\frac 1{10}$ and $\frac 9{10}<a_5$, and so, we have
\begin{align*} V_5&\geq 2\Big(\int_{J_{11}\uu S_1[\frac 2{5}, \frac 1{2}]} (x-S_1(\frac {13}{60}))^2 dP+\int_{S_1[\frac 1{2}, \frac {3}{5}]\uu J_{12}}(x-\frac 1{10})^2 dP+\int_{[\frac{2}{5}, \frac{9}{20}]}  (x-\frac{2}{5})^2 dP\\
& \qquad \qquad \qquad +\int_{[\frac{9}{20}, \frac{1}{2}]}  (x-\frac{1}{2})^2 dP\Big)=\frac{21289}{9855000}=0.00216022>V_5,
\end{align*}
which is a contradiction.

Case 2. $\frac 1{5}< a_2\leq \frac 3{10}$.

Then, $\frac 12(a_2+a_3)>\frac 25$ implying $a_3>\frac 45 - a_2\geq \frac 45-\frac 3{10}=\frac 12$ which leads to a contradiction, because by our assumption $a_3=\frac 12$.
Therefore, $\ga$ does not contain any point from $(\frac 15, \frac 25)$. Similarly, $\ga$ does not contain any point from $(\frac 35, \frac 45)$, which completes the proof of the lemma.
\end{proof}

\begin{remark1}\label{remark351} By Lemma~\ref{lemma71} and due to symmetry of $P$, it can be seen that the set $S_1(\ga_2)\uu \set{\frac 12}\uu S_2(\ga_2)$, where $\ga_2$ is an optimal set of two-means for $P$, forms an optimal set of five-means.

\end{remark1}

\begin{lemma1}\label{lemma81}
Let $\ga$ be an optimal set of six-means. Then, $\ga$ does not contain any point from $(\frac 1 5, \frac 25)\uu (\frac 35, \frac 45)$.
\end{lemma1}

\begin{proof}
Let $\ga:=\set{a_1<a_2<a_3<a_4<a_6}$ be an optimal set of six-means. Suppose that $\ga$ contains a point from $(\frac 15, \frac 25)$. Proposition~\ref{prop11} says that $\ga$ contains points from $J_1$, $C$ and $J_2$. Consider the set of six points $\gb:=S_1(\ga_2) \uu S_2(\ga_2)\uu \ga_2(\gn)$, where $\ga_2\in \C C_2(P)$ and $\ga_2(\gn) \in \C C_2(\gn)$.  Then,
\begin{align*}
\int\min_{a \in \gb}(x-a)^2 dP&=4\int_{J_{11}\uu S_1[\frac25, \frac 12]}(x-S_{1}(\frac {13}{60}))^2 dP+\frac 13 V_2(\gn)=\frac{21481}{19710000}=0.00108985.
\end{align*}
Since $V_6$ is an optimal set of six-means, $V_6\leq 0.00108985$. Since $\ga\ii C\neq \es$, $\te{card}(\ga)$ is even, and $P$ is symmetric about $\frac 12$, we can assume that $\ga$ contains equal number of points from both sides of the point $\frac 12$ implying $\te{card}(\ga\ii C)=2$, and $a_3, a_4 \in C$. Since $\ga$ contains a point from $(\frac 15, \frac 25)$, the following two cases can arise:

Case 1. $\frac 3{10}\leq a_2<\frac 25$.

Proceeding as Case~1 of Lemma~\ref{lemma71} in this case, we have

\begin{align*} V_6&\geq 2\Big(\int_{J_{11}\uu S_1[\frac 2{5}, \frac 1{2}]} (x-S_1(\frac {13}{60}))^2 dP+\int_{S_1[\frac 1{2}, \frac {3}{5}]\uu J_{12}}(x-\frac 1{10})^2 dP\Big)=0.00188245>V_6,
\end{align*}
which is a contradiction.

Case 2. $\frac 1{5}< a_2\leq \frac 3{10}$.

Due to symmetry we can assume that $\frac 7{10}\leq a_5<\frac 45$. Then, $\frac 12(a_2+a_3)>\frac 25$ implying $a_3>\frac 45 - a_2\geq \frac 45-\frac 3{10}=\frac 12$. Similarly,  $\frac 12(a_4+a_5)<\frac 35$ implying  $a_4<\frac 12$, which leads to a contradiction as $a_3<a_4$.

Therefore, $\ga$ does not contain any point from $(\frac 15, \frac 25)$. Likewise, $\ga$ does not contain any point from $(\frac 35, \frac 45)$, which completes the proof of the lemma.
\end{proof}

\begin{remark1} \label{remark99} By Lemma~\ref{lemma81}, it is easy to see that if $\ga_6$ is an optimal set of six-means for $P$, then $\ga_6=S_1(\ga_2)\uu \ga_2(\gn)\uu S_2(\ga_2)$, where $\ga_2$ is an optimal set of two-means for $P$, and the corresponding quantization error is given by $V_6=\frac{21481}{19710000}=0.00108985$.

\end{remark1}

\begin{lemma1}\label{lemma91}
Let $\ga$ be an optimal set of seven-means. Then, $\ga$ does not contain any point from $(\frac 1 5, \frac 25)\uu (\frac 35, \frac 45)$.
\end{lemma1}

\begin{proof} Consider the set $\gb:=S_1(\ga_2)\uu S_2(\ga_3)\uu \ga_2(\gn)$, where $\ga_2 \in \C C_2(P)$, $\ga_3\in \C C_3(P)$, and $\ga_2(\gn) \in \C C_2(\gn)$. Then,
\begin{align*}
&\int\min_{a\in \gb}(x-a)^2 dP=\int_{J_1}\min_{a \in S_1(\ga_2)}(x-a)^2 dP+ \int_{J_2}\min_{a \in S_2(\ga_3)}(x-a)^2 dP+\frac 13 V_2(\gn) \\
&=\frac 1{75} V_2+ \frac 1{75}\Big(\frac 1{75} V+\frac 13 W+\frac 1{75} V\Big)+\frac{1}{3} \frac 1{1200}=\frac{7273}{9855000}=0.000738001.
\end{align*}
Since $V_7$ is the quantization error for seven-means, we have $V_7\leq 0.000738001$. Let $\ga:=\set{a_1<a_2<a_3<a_4<a_6<a_7}$ be an optimal set of seven-means. Proposition~\ref{prop11} says that $\ga$ contains points from $J_1$, $C$ and $J_2$. Suppose that $\ga$ contains a point from $(\frac 15, \frac 25)$. We now prove the following claim:

\tit{Claim~1. $\te{card}(\ga\ii J_1)\geq 2$ and  $\te{card}(\ga\ii J_2)\geq 2$.}

Suppose that $\te{card}(\ga\ii J_1)=1$. Then, two cases can arise:

Case 1. $\frac 3{10}\leq a_2<\frac 15$.

Then, $\frac 12(a_1+a_2)<\frac 15$ implying $a_1< \frac{1}{10}$, and so
\begin{align*} V_7&\geq \int_{J_{11}\uu S_1[\frac 2{5}, \frac 1{2}]} (x-S_1(\frac {13}{60}))^2 dP+\int_{S_1[\frac 1{2}, \frac {3}{5}]\uu J_{12}}(x-\frac 1{10})^2 dP=\frac{37103}{39420000}=0.000941223>V_7,
\end{align*}
which is a contradiction.

Case 2. $a_2\leq  \frac 3{10}$.

Then, $\frac 12(a_2+a_3)>\frac 25$ yielding $a_3>\frac 12$, and so
\begin{align*} V_7&\geq \int_{J_1} \min_{a\in \set{S_1(\frac{13}{60}), S_1(\frac{47}{60})}} (x-a)^2 dP+\int_{[\frac 25, \frac 12]}(x-\frac 12)^2 dP=\frac 1{75} V_2+ \frac 13 \int_{[\frac 25, \frac 12]}(x-\frac 12)^2 d\gn,
\end{align*}
which implies $V_7\geq \frac{18953}{19710000}=0.000961593>V_7$, and so a contradiction arises.

Thus, we can assume that $\te{card}(\ga\ii J_1)\geq 2$.
Similarly, we can show that $\te{card}(\ga\ii J_2)\geq 2$.  Thus, the claim is true.

Next, we prove the following claim:

\tit{Claim 2. $\te{card}(\ga\ii C)=2$.}

By the hypothesis $\ga$ contains a point from $(\frac 15, \frac 25)$; by Claim 1, $\te{card}(\ga\ii J_1)\geq 2$ and  $\te{card}(\ga\ii J_2)\geq 2$. So, we can assume that $\tit{card}(\ga\ii C)\leq 2$. Suppose that $\tit{card}(\ga\ii C)=1$. Let $a_j=\max\set{a_i : a_i<\frac 25 \te{ and } 1\leq i\leq 7}$. Then, $a_j<\frac 25$. Consider the following two cases:

Case~A. $\frac 3{10}\leq a_j< \frac 25$.

Then, $\frac 12(a_{j-1}+a_j)<\frac 15$ implying $a_{j-1}<\frac 1{10}$. First, suppose that $\ga$ does not contain any point from $(\frac 35, \frac 45)$.
Proceeding as \eqref{eq001}, we have  \begin{align*} \min_{a\in \set{a_j, a_{j+1}}} \left\{\int_C (x - a)^2 dP : \frac 3{10}< a_j\leq \frac 25, \,  \frac{2}{5}\leq a_{j+1}\leq \frac{3}{5}, \te{ and }   \frac 25\leq \frac{a_j+a_{j+1}}{2}\right\}
=\frac{1}{2025},
\end{align*}
which occurs when $a_j=\frac 2{5}$ and $a_{j+1}=\frac{8}{15}$. Then,
\begin{align*} V_7\geq \int_{S_1[\frac 1{2}, \frac {3}{5}]\uu J_{12}}(x-\frac 1{10})^2 dP+\frac{1}{2025}=\frac{1457}{1182600}=0.00123203>V_7,
\end{align*}
which gives a contradiction. Next, suppose that $\ga$ contains a point from $(\frac 35, \frac 45)$, i.e., $a_{j+2} \in (\frac 35, \frac 45)$. Then,
\begin{align*}   \min_{a\in \set{a_j, a_{j+1}, a_{j+2}}} &\Big\{\int_C (x - a)^2 dP : a_j\in [\frac 3{10}, \frac 25), \, a_{j+1}\in [\frac{2}{5}, \frac{3}{5}], \, a_{j+2} \in (\frac 35, \frac 45), \\
& \te{ and }   \frac 25\leq \frac{a_j+a_{j+1}}{2}<\frac{a_{j+1}+a_{j+2}}{2}\leq \frac 35\Big\}=\frac{1}{3600},
\end{align*}
which occurs when $a_j=\frac 25, \, a_{j+1}=\frac 12$ and $a_{j+2}=\frac 35$, and so
\begin{align*} V_7\geq \int_{S_1[\frac 1{2}, \frac {3}{5}]\uu J_{12}}(x-\frac 1{10})^2 dP+\frac{1}{3600}=\frac{89}{87600}=0.00101598>V_7
\end{align*}
which leads to a contradiction. Thus, a contradiction arises in Case 3.

Case~B. $\frac 15<a_j\leq \frac 3{10}$.

Suppose that $\ga$ does not contain any point from $(\frac 35, \frac 45)$. Then,

\begin{align*} \min_{a\in \set{a_j, a_{j+1}}} \left\{\int_C (x - a)^2 dP : \frac 15 < a_j\leq \frac 3{10}, \,  \frac{2}{5}\leq a_{j+1}\leq \frac{3}{5}, \te{ and }   \frac 25\leq \frac{a_j+a_{j+1}}{2}\right\}
=\frac{1}{900},
\end{align*}
which occurs when $a_j=\frac 3{10}$ and $a_{j+1}=\frac 12$ yielding
$V_7\geq \frac{1}{900}>0.00111111>V_7$, which is a contradiction. Next, suppose that $\ga$ contains a point from $(\frac 35, \frac 7{10}]$. This case can be considered as a reflection of the last part of Case~A. Thus, if $\ga$ contains a point from $(\frac 35, \frac 7{10}]$, a contradiction arises.
Now, suppose that $\ga$ contains a point from $[\frac 7{10}, \frac 45)$, i.e., $a_{j+2}\in [\frac 7{10}, \frac 45)$. Then,
\begin{align*}   \min_{a\in \set{a_j, a_{j+1}, a_{j+2}}} &\Big\{\int_C (x - a)^2 dP : a_j\in (\frac 15, \frac 3{10}], \, a_{j+1}\in [\frac{2}{5}, \frac{3}{5}], \, a_{j+2} \in [\frac 7{10}, \frac 45), \\
& \te{ and }   \frac 25\leq \frac{a_j+a_{j+1}}{2}<\frac{a_{j+1}+a_{j+2}}{2}\leq \frac 35\Big\}=\frac{1}{900},
\end{align*}
which occurs when $a_j=\frac 3{10}, \, a_{j+1}=\frac 12$ and $a_{j+2}=\frac 7{10}$.
Then,
\begin{align*} V_7\geq \frac{1}{900}=0.00111111>V_7
\end{align*}
which yields a contradiction.

Hence, $\te{card}(\ga\ii C)=2$, i.e., Claim~2 is true.

By the hypothesis $\ga$ contains a point from $(\frac 15, \frac 35)$; by Claim~1 and Claim~2, we have
$\te{card}(\ga\ii J_1)\geq 2$, $\te{card}(\ga\ii J_2)\geq 2$, and $\te{card}(\ga\ii C)= 2$. So, we can assume that $\te{card}(\ga\ii J_1)= \te{card}(\ga\ii J_2)=2$, $\te{card}(\ga\ii C)= 2$, and $\ga$ does not contain any point from $(\frac 35, \frac 45)$. Then, the following two cases arise:

Case~5. $\frac 3{10} \leq a_3 <\frac 25$.

Then,
\begin{align*} V_7&\geq \int_{S_1[\frac 1{2}, \frac {3}{5}]\uu J_{12}}(x-\frac 1{10})^2 dP+\int_{J_2}\min_{a\in S_2(\ga_2)}(x-a)^2 dP=0.00114424>V_7,
\end{align*}
which is a contradiction.

Case~6. $\frac 15<a_3\leq  \frac 3{10}$.

Then,
\begin{align*}   \min_{a\in \set{a_3, a_{4}, a_{5}}} &\Big\{\int_C (x - a)^2 dP : \frac 15<a_3\leq \frac{3}{10}, \, \frac{2}{5}\leq a_4<a_5\leq \frac{3}{5},
\te{ and }   \frac 25\leq \frac{a_3+a_{4}}{2}\Big\}=\frac{1}{1620},
\end{align*}
which occurs when $a_3=\frac{3}{10}, \, a_4=\frac{1}{2}$, and $a_5=\frac{17}{30}$. Thus, in this case,
\[V_7\geq \int_{C}\min_{a\in \set{a_3,a_4, a_5}}(x-a)^2 dP+ \int_{J_2}\min_{a\in S_2(\ga_2)}(x-a)^2 dP=\frac{1}{1620}+\frac 1{75}V_2=\frac{60509}{59130000},\]
i.e., $V_7\geq \frac{60509}{59130000}=0.00102332>V_7$,
which gives a contradiction.

Therefore, we can assume that $\ga$ does not contain any point from the interval $(\frac 15, \frac 25)$. Reflecting the situation with respect to the point $\frac 12$, we can also assume that $\ga$ does not contain any point from the interval $(\frac 35, \frac 45)$. Hence, the proof of the lemma is complete.
\end{proof}

\begin{remark1}\label{remark101}
In Lemma~\ref{lemma91}, we have proved that if $\ga$ is an optimal set of seven-means, then $\te{card}(\ga\ii J_1)\geq 2$, $\te{card}(\ga\ii J_2)\geq 2$, and $\te{card}(\ga\ii C)=2$. Moreover, $\ga$ does not contain any point from $(\frac 15, \frac 25)\uu (\frac 35, \frac 45)$.  Therefore, we can assume that if $\ga$ is an optimal set of seven-means, then $\ga=S_1(\ga_3)\uu S_2(\ga_2) \uu \ga_2(\gn)$, or $\ga=S_1(\ga_2)\uu S_2(\ga_3) \uu \ga_2(\gn)$, where $\ga_2 \in \C C_2(P)$, $\ga_3\in \C C_3(P)$, and $\ga_2(\gn) \in \C C_2(\gn)$, and the corresponding quantization error is $V_7=\frac{7273}{9855000}=0.000738001$.
\end{remark1}
\begin{propo} \label{prop21}
Let $\ga$ be an optimal set of $n$-means with $n\geq 3$. Then, $\ga$ does not contain any point from $(\frac 1 5, \frac 25)\uu (\frac 35, \frac 45)$.
\end{propo}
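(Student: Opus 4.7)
The cases $n\in\{3,4,5,6,7\}$ are already settled by Proposition~\ref{prop51} together with Lemmas~\ref{lemma61}, \ref{lemma71}, \ref{lemma81}, \ref{lemma91}, so it suffices to handle $n\geq 8$. The plan is to follow the template of Lemma~\ref{lemma91} but with a strengthened upper bound on $V_n$ that leaves room for the case analysis.

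For the upper bound, I would use the trial set $\gb:=S_1(\ga_3)\cup\ga_2(\gn)\cup S_2(\ga_3)$ of cardinality $8$. By Lemma~\ref{lemma0021} and the symmetry of $P$,
\[\int\min_{a\in\gb}(x-a)^2\,dP \;=\; \tfrac{2}{75}V_3+\tfrac{1}{3}V_2(\gn),\]
so $V_n\leq V_8\leq \tfrac{2}{75}V_3+\tfrac{1}{3}V_2(\gn)$ for every $n\geq 8$; this value is strictly smaller than the lower-bound thresholds that appear below.

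Assume for contradiction that an optimal $\ga=\{a_1<\cdots<a_n\}$ meets $(\tfrac{1}{5},\tfrac{2}{5})$, and set $j:=\max\{i:a_i<\tfrac{2}{5}\}$, so $a_j\in(\tfrac{1}{5},\tfrac{2}{5})$. By Proposition~\ref{prop11}, $\ga$ meets both $J_1$ and $C$; since $a_j$ is the centroid of its Voronoi region (Proposition~\ref{prop1000}), that region cannot sit entirely in $J_1$ or entirely in $[\tfrac{2}{5},1]$, which forces
\[\tfrac{1}{2}(a_{j-1}+a_j)<\tfrac{1}{5}\qquad\text{and}\qquad\tfrac{1}{2}(a_j+a_{j+1})>\tfrac{2}{5}.\]
I would then split into two cases. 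In \emph{Case A}, $\tfrac{3}{10}\leq a_j<\tfrac{2}{5}$, the first inequality gives $a_{j-1}<\tfrac{1}{10}$; a short distance comparison then shows $\min_{a\in\ga}|x-a|\geq|x-\tfrac{1}{10}|$ for every $x\in S_1([\tfrac{1}{2},\tfrac{3}{5}])\cup J_{12}\subset[\tfrac{1}{10},\tfrac{1}{5}]$, so Corollary~\ref{cor2} gives
\[V_n\;\geq\;\int_{S_1([\frac{1}{2},\frac{3}{5}])\cup J_{12}}(x-\tfrac{1}{10})^2\,dP,\]
an explicit constant strictly larger than the upper bound on $V_n$. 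In \emph{Case B}, $\tfrac{1}{5}<a_j\leq\tfrac{3}{10}$, the second inequality gives $a_{j+1}>\tfrac{1}{2}$; on the sub-interval $[\tfrac{2}{5},\tfrac{1}{2}(a_j+a_{j+1})]\subset C$ contained in the Voronoi region of $a_j$ we have $x-a_j\geq\tfrac{1}{10}$, and integrating against the density of $P$ on $C$, combined with the $J_1$-contribution that the centroid identity at $a_j$ forces, again exceeds the upper bound. The analogous statement for $(\tfrac{3}{5},\tfrac{4}{5})$ follows by reflecting through $x\mapsto 1-x$ using the symmetry of $P$ about $\tfrac{1}{2}$.

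The main obstacle will be \emph{Case B}: the control on $a_{j-1}$ weakens to $a_{j-1}<\tfrac{2}{5}-a_j$, so the lower bound has to be driven primarily by the $C$-contribution, whose magnitude depends on the unknown $a_{j+1}$. If the direct estimate is not quite enough, I would refine Case B into subcases according to whether $a_{j+1}\in C$ or $a_{j+1}\in(\tfrac{3}{5},\tfrac{4}{5})\cup J_2$, mirroring the subcase structure used in Lemma~\ref{lemma91}; alternatively, I would tighten the upper bound on $V_n$ by passing to a larger trial set such as $S_1(\ga_4)\cup\ga_2(\gn)\cup S_2(\ga_4)$.
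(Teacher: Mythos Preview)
Your approach is essentially the paper's: the same base cases, the same trial set $\gb=S_1(\ga_3)\cup\ga_2(\gn)\cup S_2(\ga_3)$ giving $V_n\le V_8\le\frac{2}{75}V_3+\frac13 V_2(\gn)=\frac{2537}{6570000}\approx 0.000386$, and the same two-case split on $a_j$ at the threshold $\tfrac{3}{10}$. Case~A is exactly the paper's Case~1, and your lower bound $\int_{S_1[\frac12,\frac35]\cup J_{12}}(x-\tfrac1{10})^2\,dP=\frac{97}{131400}\approx 0.000738$ already exceeds the upper bound.

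Your hesitation in Case~B is unnecessary, and the proposed refinements (subcases on the location of $a_{j+1}$, the extra $J_1$-contribution, or a larger trial set) are not needed. The paper handles this case in one line: once $a_j\le\tfrac{3}{10}$ and $a_{j+1}>\tfrac12$, observe that for every $x\in[\tfrac25,\tfrac12]$ one has $x-a_j\ge x-\tfrac{3}{10}\ge \tfrac12-x$ and $a_{j+1}-x>\tfrac12-x$, so $\min_{a\in\ga}(x-a)^2\ge (x-\tfrac12)^2$ on that whole sub-interval. Hence
\[
V_n\;\ge\;\int_{[\frac25,\frac12]}(x-\tfrac12)^2\,dP\;=\;\tfrac13\int_{\frac25}^{\frac12}5\,(x-\tfrac12)^2\,dx\;=\;\tfrac{1}{1800}\approx 0.000556\;>\;V_8,
\]
which is already a contradiction. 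The point you were missing is that you do not need to integrate only over the Voronoi region of $a_j$; bounding $\min_{a\in\{a_j,a_{j+1}\}}(x-a)^2$ from below by $(x-\tfrac12)^2$ on all of $[\tfrac25,\tfrac12]$ removes the dependence on the unknown midpoint $\tfrac12(a_j+a_{j+1})$.
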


\begin{proof} By Proposition~\ref{prop51} and Lemma~\ref{lemma61}, Lemma~\ref{lemma71}, Lemma~\ref{lemma81}, and Lemma~\ref{lemma91}, the proposition is true for $3\leq n\leq 7$. We now prove that the proposition is true for all $n\geq 8$. Let $\ga:=\set{a_1<a_2<\cdots<a_n}$ be on optimal set of $n$ means for $n\geq 8$. Consider the set $\gb=S_1(\ga_3)\uu S_2(\ga_3)\uu \ga_2(\gn)$, where $\ga_3\in \C C_3(P)$ and $\ga_2(\gn)\in \C C_2(\gn)$. Then,
\begin{align*}
&\int\min_{a\in \gb}(x-a)^2 dP\\
&=2 \Big(\int_{J_{11}}(x-S_{11}(\frac 12))^2 +\int_{C_1}(x-S_1(\frac 1 2))^2 dP+\int_{J_{12}}(x-S_{12}(\frac 12))^2\Big)+\int_C\min_{a\in \ga_2(\gn)}(x-a)^2 dP\\
&=2 \Big(\frac 1{75^2} V+ \frac 1{75} \frac 13 W+\frac 1{75^2} V\Big)+\frac{1}{3}\frac 1{1200}=\frac{2537}{6570000}=0.000386149,
\end{align*}
Since $V_n$ is the quantization error for $n$-means with $n\geq 8$, we have $V_n\leq V_8\leq 0.000386149$. For the sake of contradiction, assume that $\ga$ contains a point from the open interval $(\frac 15, \frac 25)$. Recall that there can not be more than one point from $\ga$ in any of the open intervals $(\frac 15, \frac 25)$ and $(\frac 35, \frac 45)$. Suppose that $a_j\in (\frac 15, \frac 25)$, where $j=\max\set{i : a_i< \frac 25 \te{ and } 1\leq i\leq n}$. Then, following cases can arise:

Case 1. $\frac 3{10}\leq a_j<\frac 25$.

Then, $a_{j-1}<\frac 1{10}$ yielding
\[V_n\geq  \int_{S_1[\frac 1{2}, \frac {3}{5}]\uu J_{12}}(x-\frac 1{10})^2 dP=\frac{97}{131400}=0.000738204>V_n,\]
which is a contradiction.

Case 2. $\frac 1{5}< a_j\leq \frac 3{10}$.

Then, $\frac 12(a_j+a_{j+1})>\frac 25$ implies $a_{j+1}>\frac 45-a_j\geq \frac 45-\frac 3{10}=\frac 12$. Thus,
\begin{align*}
V_n&\geq \int_{[\frac 25, \frac 12]}\min_{a\in \set{a_j, a_{j+1}}}(x-a)^2 dP\geq\int_{[\frac 25, \frac 12]}(x-\frac 12)^2 dP=\frac{1}{1800}=0.000555556>V_n,
\end{align*}
which leads to a contradiction.

Similarly, we can show that for any $a\in \ga$ if $a\in (\frac 35, \frac 45)$, then a contradiction arises.
Thus, the proof of the proposition is complete.
\end{proof}

\subsection{Optimal sets and the quantization error for a given sequence $F(n)$}
In this subsection we first define the two sequences $\set{a(n)}_{n\geq 0}$ and  $\set{F(n)}_{n\geq 0}$. These two sequences play important role in the rest of the paper.

\begin{defi} \label{defi31}  Define the sequence $\set{a(n)}_{n\geq 0}$ such that
$a(0)=0$, and $a(n)=2n-1$ for all $n\geq 1$.
Define the sequence $\set{F(n)}_{n\geq 0}$ such that $F(n)=4^n+2^{n+1}$, i.e.,
\[\set{F(n)}_{n\geq 0}=\{3,8,24,80,288,1088,4224,16640,66048,263168,1050624, \cdots\}.\]
\end{defi}

\begin{lemma1}
Let $\set{a(n)}_{n\geq 0}$ and $\set{F(n)}_{n\geq 0}$ be the sequences defined by Definition~\ref{defi31}. Then, $F(n+1)=2^{a(n+1)}+2 F(n)$.
\end{lemma1}
\begin{proof}
We have,
$2^{a(n+1)}+2F(n)=2^{2n+1}+2(4^n+2^{n+1})=4^{n+1}+2^{n+2}=F(n+1)$, and thus the lemma follows.
\end{proof}

For $n\in \D N$, we identify the sequence of sets
$ \ga_{2^{a(n)}}(\gn), \, \uu_{\go \in I} S_\go(\ga_{2^{a(n-1)}}(\gn)), \, \\ \uu_{\go \in I^2} S_\go(\ga_{2^{a(n-2)}}(\gn)), \, \cdots, \, \uu_{ \go \in I^{n-2}} S_\go(\ga_{2^{a(2)}}(\gn)), \, \uu_{\go \in I^{n-1}} S_\go(\ga_{2^{a(1)}}(\gn)), \, \uu_{\go \in I^{n}} S_\go(\ga_{2^{a(0)}}(\gn))$, and \\
$ \set{S_\go(\frac 12) : \go \in I^{n+1}}$, respectively, by $S(n)$, $S(n-1)$, $S(n-2)$,  $\cdots$, $S(2)$, $S(1)$, $S(0)$ and $S(n+1)$. For $0\leq \ell\leq n$, write
\[S^{(2)}(\ell):=\uu_{\go \in I^{n-\ell}} S_\go(\ga_{2^{a(\ell)+1}}(\gn)) \te{ and } S^{(2)(2)}(\ell):=\uu_{\go \in I^{n-\ell}} S_\go(\ga_{2^{a(\ell)+2}}(\gn)). \]
Further, we write
$S^{(2)}(n+1):=\set{S_\go(\ga_2(P)) : \go \in I^{n+1}}=\set{S_\go(\frac {13}{60}), S_\go(\frac{47}{60}) : \go \in I^{n+1}}$,
and $S^{(2)(2)}(n+1):=\uu_{\go \in I^{n+1}} S_\go(\ga_{2^{a(0)}}(\gn))\uu \set{S_\go(\frac 12) : \go \in I^{n+2}}.$ Moreover, for any $\ell\in \D N \uu\set{0}$, if $A:=S(i)$, we identify $S^{(2)}(i)$ and $S^{(2)(2)}(i)$, respectively, by $A^{(2)}$ and $A^{(2)(2)}$. For $n\in\D N \uu \set{0}$, set
\begin{equation} \label{eq67} \ga_{F(n)}:=S(n)\uu S(n-1)\uu S(n-2)\cdots S(1)\uu S(0)\uu S(n+1),\end{equation}
and \[SF(n):=\set{S(n), S(n-1), S(n-2), \cdots, S(1), S(0), S(n+1)}.\]
In addition, write
\begin{align*} \label{eq35} SF^\ast(n): =\set{S(n), S(n-1), \cdots,  S(0), S(n+1),  S^{(2)}(n), S^{(2)}(n-1), \cdots,  S^{(2)}(1), S^{(2)}(n+1)}.
\end{align*}
For any element $a\in A \in SF^\ast(n)$, by the Voronoi region of $a$ it is meant the Voronoi region of $a$ with respect to the set $\uu_{B\in SF^\ast(n)} B$.
Similarly, for any $a\in A\in SF(n)$, by the Voronoi region of $a$ it is meant the Voronoi region of $a$ with respect to the set $\uu_{B\in SF(n)} B$. Notice that if $a, b\in A$, where $A \in SF(n)$ or $A\in SF^\ast(n)$, the error contributed by $a$ in the Voronoi region of $a$ equals to the error contributed by $b$ in the Voronoi region of $b$.  Let us now define an order $\succ $ on the set $SF^\ast(n)$ as follows: For $A, B\in SF^\ast(n)$ by $A\succ B$ it is meant that the error contributed by any element $a\in A$ in the Voronoi region of $a$ is larger than the error contributed by any element $b \in B$ in the Voronoi region of $b$. Similarly, we define the order relation $\succ $ on the set $SF(n)$.

\begin{remark1}\label{remark1001} By Definition~\ref{defi31}, we have
\[\ga_{F(n)}=S_1(\ga_{F(n-1)})\uu \ga_{2^{a(n)}}(\gn)\uu S_2(\ga_{F(n-1)}).\]

\end{remark1}
\begin{lemma1} \label{lemma192}  Let $\succ $ be the order relation on $SF^\ast(n)$. Then,

$(i)$ $S(n)\succ S(n-1)\succ \cdots \succ S(2)\succ S(1)$ for all $n\geq 2$, and  $S^{(2)}(n)\succ S^{(2)}(n-1)\succ \cdots \succ S^{(2)}(2)\succ S^{(2)}(1)$ for all $n\geq 3$.

$(ii)$ $S(1)\succ S^{(2)}(n)$ for all $0\leq n\leq 14$.

$(iii)$ $S(n-13)\succ S^{(2)}(n)\succ S(n-14)$ for all $n\geq 14$.

$(iv)$ $S^{(2)}(2)\succ S(n+1)$ for all $n\geq 2$, and $S(1)\succ S(n+1)\succ S^{(2)}(1)\succ S(0)\succ S^{(2)}(n+1)\succ S^{(2)}(0)$ for all $n\geq 1$. On the other hand, $S(1)\succ S(0)\succ S^{(2)}(1)\succ S^{(2)}(0)$ for $n=0$.

\end{lemma1}
\begin{proof}
$(i)$ Let $1\leq p<q\leq n$. The distortion error due to any element in the set $S(q):=\uu_{\go\in I^{n-q}}(\ga_{2^{a(q)}}(\gn))$ is given by
$\frac 1{75^{n-q}} \frac 13 \frac{W}{2^{3(a(q))}}=\frac 1{75^{n-q}} \frac 13 \frac{W}{2^{6q-1}}$.
Similarly, the distortion error due to any element in the set $S(p)$ is given by $\frac 1{75^{n-p}} \frac 13 \frac{W}{2^{6p-1}}$. Thus, $S(q)\succ S(p)$ is true if
$\frac 1{75^{n-q}} \frac 13 \frac{W}{2^{6q-1}}>\frac 1{75^{n-p}} \frac 13 \frac{W}{2^{6p-1}}$, i.e., if $\Big(\frac{75}{64}\Big)^{q-p}>1$, which is clearly true since $q>p$. Hence, $S(n)\succ S(n-1)\succ \cdots \succ S(2)\succ S(1)$ for all $n\geq 2$, and similarly, we can prove that $S^{(2)}(n)\succ S^{(2)}(n-1)\succ \cdots \succ S^{(2)}(2)\succ S^{(2)}(1)$ for all $n\geq 3$.

$(ii)$ $S(1)\succ S^{(2)}(n)$ is true if $\frac 1{75^{n-1}} \frac 13 \frac{W}{2^{3a(1)}}>\frac 13 \frac{W}{2^{3(a(n)+1)}}$, i.e., if $\frac {75}{8} \Big(\frac {64}{75}\Big)^n > 1$, which is true for $0\leq n\leq 14$.

$(iii)$ For $0\leq n\leq 14$, we see that $S(n-13)\succ S^{(2)}(n)\succ S(n-14)$ is true if $\frac{1}{75^{13}}\frac 13 \frac W{2^{3 a(n-13)}}>\frac 13\frac {W}{2^{3(a(n)+1)}}>\frac{1}{75^{14}}\frac 13 \frac W{2^{3a(n-14)}}$, i.e., if $\frac{1}{75^{13}} \frac 1{2^{6n-81}}>\frac {1}{2^{6n}}>\frac{1}{75^{14}} \frac 1{2^{6n-87}}$, i.e., if $\frac {2^{81}}{75^{13}}>1>\frac {2^{87}}{75^{14}}$, which is obviously true.

$(iv)$ For $n\geq 2$, the relation $S^{(2)}(2)\succ S(n+1)$ is true if $\frac{1}{75^{n-2}}\frac 13 \frac W{2^{3 (a(2)+1)}}>\frac 1{75^{n+1}}V$, i.e., if $\frac {75^3}{3} \frac {W}{2^{12}}>V$ which is obvious. Similarly, we can prove the rest of the inequalities.
\end{proof}

We now give the following proposition.

\begin{propo} \label{prop711}  Let $\succ $ be the order relation on $SF^\ast(n)$. Then,

$(i)$ $S(n)\succ S(n-1)\succ S(n-2)\succ \cdots \succ S(2)\succ S(1)\succ S^{(2)}(n)\succ S^{(2)}(n-1)\succ S^{(2)}(n-2)\succ \cdots \succ S^{(2)}(2)\succ S(n+1)\succ S^{(2)}(1)\succ S(0)\succ S^{(2)}(n+1)\succ S^{(2)}(0)$ for all $2\leq n\leq 14$.

$(ii)$ $S(n)\succ S(n-1)\succ S(n-2)\succ \cdots \succ S(n-13)\succ S^{(2)}(n)\succ S(n-14)\succ S^{(2)}(n-1)\succ S(n-15)\succ S^{(2)}(n-2)\succ S(n-16)\succ \cdots\succ S^{(2)}(17)\succ S(3)\succ S^{(2)}(16)\succ S(2)\succ S^{(2)}(15)\succ S(1)\succ S^{(2)}(14)\succ S^{(2)}(13)
\succ \cdots\succ S^{(2)}(2)\succ S(n+1)\succ S^{(2)}(1)\succ S(0)\succ S^{(2)}(n+1)\succ S^{(2)}(0)$ for all $n\geq 15$.
\end{propo}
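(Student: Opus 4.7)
The plan is to assemble the claimed total order on $SF^\ast(n)$ by combining the pairwise comparisons already established in Lemma~\ref{lemma192} and invoking the transitivity of the order relation $\succ$. No new inequalities need to be proved; the content of the proposition is a careful bookkeeping of how the individual chains produced by parts $(i)$--$(iv)$ of Lemma~\ref{lemma192} interlock.

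For part $(i)$, with $2\le n\le 14$, I would first write down the two homogeneous chains
\[
S(n)\succ S(n-1)\succ\cdots\succ S(1)\q\text{and}\q S^{(2)}(n)\succ S^{(2)}(n-1)\succ\cdots\succ S^{(2)}(1),
\]
both supplied by Lemma~\ref{lemma192}$(i)$. These are joined into a single chain through Lemma~\ref{lemma192}$(ii)$, which, under the standing restriction $n\le 14$, gives $S(1)\succ S^{(2)}(n)$. Finally, Lemma~\ref{lemma192}$(iv)$ provides the tail $S^{(2)}(2)\succ S(n+1)\succ S^{(2)}(1)\succ S(0)\succ S^{(2)}(n+1)\succ S^{(2)}(0)$, which slots onto the end of the chain. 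Concatenation yields the claimed order.

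For part $(ii)$, with $n\ge 15$, the key is the interleaving inequality $S(k-13)\succ S^{(2)}(k)\succ S(k-14)$ from Lemma~\ref{lemma192}$(iii)$, valid for each $k$ with $14\le k\le n$. I would begin with the top segment $S(n)\succ S(n-1)\succ\cdots\succ S(n-13)$ from Lemma~\ref{lemma192}$(i)$, then apply $(iii)$ at $k=n$ to insert $S^{(2)}(n)$ between $S(n-13)$ and $S(n-14)$, at $k=n-1$ to insert $S^{(2)}(n-1)$ between $S(n-14)$ and $S(n-15)$, and inductively at $k=n-2,n-3,\ldots,15$, which produces the full interleaved segment down to $S(2)\succ S^{(2)}(15)\succ S(1)$. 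The junction $S(1)\succ S^{(2)}(14)$ is delivered by Lemma~\ref{lemma192}$(ii)$ at $n=14$, and the remaining homogeneous tail $S^{(2)}(14)\succ S^{(2)}(13)\succ\cdots\succ S^{(2)}(2)$ comes from Lemma~\ref{lemma192}$(i)$. Appending the final block $S^{(2)}(2)\succ S(n+1)\succ S^{(2)}(1)\succ S(0)\succ S^{(2)}(n+1)\succ S^{(2)}(0)$ from Lemma~\ref{lemma192}$(iv)$ completes the chain.

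The main and only obstacle is purely bookkeeping: one must verify that the indices match exactly as stated, in particular that the interleaving proceeds cleanly from $k=n$ down to $k=15$ and then hands off to Lemma~\ref{lemma192}$(ii)$ precisely at $S(1)\succ S^{(2)}(14)$, and that the boundary block involving $S(n+1)$, $S^{(2)}(n+1)$, $S(0)$, and $S^{(2)}(0)$ glues onto the end without gap or overlap. Since every substantive comparison has already been reduced to a power-of-$\frac{75}{64}$ or similar computation in Lemma~\ref{lemma192}, the proof reduces to presenting this assembly carefully and invoking transitivity.
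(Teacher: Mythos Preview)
Your proposal is correct and follows exactly the approach of the paper, which simply states that the proof ``follows by combining the inequalities in Lemma~\ref{lemma192}''; you have just spelled out the bookkeeping that the paper leaves implicit. One minor point: the junction $S(1)\succ S^{(2)}(14)$ in part~$(ii)$ can equally be obtained from Lemma~\ref{lemma192}$(iii)$ at index $14$, and note that the comparisons in Lemma~\ref{lemma192} are independent of the ambient $n$ (the common factor $75^{-n}$ cancels), so applying them at indices below the ambient value is legitimate.
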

\begin{proof} The proof of the proposition follows by combining the inequalities in Lemma~\ref{lemma192}.
\end{proof}

\begin{lemma1} \label{lemma431}
Let $\ga_{F(n)}$ and $SF(n)$ be the sets as defined before. Then,
\[\ga_{F(n+1)}=\Big(\mathop{\uu}\limits_{A\in (SF(n)\setminus S(0))}A^{(2)(2)}\Big)\uu S^{(2)}(0).\]
\end{lemma1}

\begin{proof} For $S(n), S(n-1), \cdots, S(2), S(0), S(n+1) \in SF(n)$, we have
\begin{align*}
S^{(2)(2)}(n)&= \ga_{2^{a(n)+2}}(\gn)=\ga_{2^{a(n+1)}}(\gn), \\
S^{(2)(2)}(n-1)&= \uu_{\go\in I}S_\go(\ga_{2^{a(n-1)+2}}(\gn))=\uu_{\go\in I}S_\go(\ga_{2^{a(n)}}(\gn)), \\
&\vdots \\
S^{(2)(2)}(1)&= \uu_{\go\in I^{n-1}}S_\go(\ga_{2^{a(1)+2}}(\gn))=\uu_{\go\in I^{n-1}}S_\go(\ga_{2^{a(2)}}(\gn)), \\
S^{(2)}(0)&=\uu_{\go\in I^{n}}S_\go(\ga_{2^{a(0)+1}}(\gn))=\uu_{\go\in I^{n}}S_\go(\ga_{2^{a(1)}}(\gn)),\\
S^{(2)(2)}(0)&=\uu_{\go\in I^{n+1}}S_\go(\ga_{2^{a(0)}}(\gn))\uu \set{S_\go(\frac 12) : \go \in I^{n+2}}\\
& =\uu_{\go\in I^{n+1}}S_\go(\ga_{2^{a(0)}}(\gn))\uu \set{S_\go(\frac 12) : \go \in I^{n+2}}.
\end{align*}
Thus, by the expression~\eqref{eq67}, the proof of the lemma follows.
\end{proof}

 We now prove the following lemma.

\begin{lemma1}\label{lemma711}  For any two sets $A, B\in SF (n)$, let $A\succ B$. Then, the distortion error due to the set $ (SF(n)\setminus A)\uu A^{(2)}\uu B$ is less than the distortion error due to the set $ (SF(n)\setminus B)\uu B^{(2)}\uu A$.
\end{lemma1}

\begin{proof} By Lemma~\ref{lemma192} and Proposition~\ref{prop711}, for all $n\geq 0$, we have
\[S(n)\succ S(n-1)\succ S(n-2)\succ \cdots\succ S(1)\succ S(n+1)\succ S(0).\]
Let $V(\ga_{F(n)})$ be the distortion error due to the set $\ga_{F(n)}$ with respect to the condensation measure $P$. First take $A=S(a(k))$ and $B=S(a(k'))$ for some $1\leq k'< k\leq n$. Then, the distortion error due to the set $(\ga_{F(n)}\setminus A)\uu A^{(2)}\uu B$ is given by
\begin{align}\label{eq45}
&V(\ga_{F(n)})-(\frac 2{75})^{n-k} \frac 13 \frac W{2^{2a(k)}}+(\frac 2{75})^{n-k} \frac 13 \frac W{2^{2(a(k)+1)}}+(\frac 2{75})^{n-k'} \frac 13 \frac W{2^{2a(k')}}\notag\\
&=V(\ga_{F(n)})-(\frac 2{75})^{n-k} \frac 13 \frac{3W}{2^{4k}}+(\frac 2{75})^{n-k'} \frac 13 \frac W{2^{4k'}}
\end{align}
Similarly, the distortion error due to the set $(\ga_{F(n)}\setminus B)\uu B^{(2)}\uu A$ is
\begin{align} \label{eq46}
V(\ga_{F(n)})-(\frac 2{75})^{n-k'} \frac 13 \frac{3W}{2^{4k'}}+(\frac 2{75})^{n-k} \frac 13 \frac W{2^{4k}}.
\end{align}
Thus, \eqref{eq45} will be less than \eqref{eq46} if $(\frac 2{75})^{n-k} \frac 13 \frac{7W}{2^{4k}}>(\frac 2{75})^{n-k'} \frac 13 \frac{7W}{2^{4k'}}$, i.e., if $(\frac {75}{32})^{k-k'}>1$, which is clearly true since $k'<k$. Now, take $A=S(k)$ for $1\leq k\leq n$, and $B=S(n+1)$. Then, the distortion error due to the set  $(\ga_{F(n)}\setminus A)\uu A^{(2)}\uu B$ is less than the distortion error due to the set  $(\ga_{F(n)}\setminus B)\uu B^{(2)}\uu A$ if
\[V_{F(n)}-(\frac 2{75})^{n-k}\frac 1{3} \frac{3W}{2^{4k}}+(\frac 2{75})^{n+1}V<V_{F(n)}-(\frac 2{75})^{n+1}V+(\frac 2{75})^{n+1}\frac 12 V_2+(\frac 2{75})^{n-k}\frac 1{3} \frac{4W}{2^{4k}},\]
i.e., if $(\frac 2{75})^{n-k}\frac 1{3} \frac{7W}{2^{4k}}>(\frac 2{75})^{n+1}(2V-\frac 12 V_2)$, i.e., if $(\frac {75}{32})^k \frac{7W}{3}>\frac 2{75}(2V-\frac 12V_2)$, which is obviously true for $k\geq 1$. Similarly, if $A=S(n+1)$ and $B=S(0)$, we can show that the distortion error due to the set  $(\ga_{F(n)}\setminus A)\uu A^{(2)}\uu B$ is less than the distortion error due to the set  $(\ga_{F(n)}\setminus B)\uu B^{(2)}\uu A$. Thus, the proof of the lemma is complete.
\end{proof}

Using the similar technique as Lemma~\ref{lemma711}, the following lemma can be proved.

\begin{lemma1}\label{lemma721}  For any two sets $A, B\in SF^\ast (n)$, let $A\succ B$. Then, the distortion error due to the set $ (SF^\ast(n)\setminus A)\uu A^{(2)}\uu B$ is less than the distortion error due to the set $ (SF^\ast(n)\setminus B)\uu B^{(2)}\uu A$.
\end{lemma1}

The following lemma is useful.

\begin{lemma1} \label{lem11}
Let $\ga$ be an optimal set of $n$-means with $\ga\ii (\frac 15, \frac 25)=\es=\ga\ii (\frac 35, \frac 45)$, and let $i=1, 2$. Set $\gb_i:=\ga \ii J_i$,  $n_i:=\te{card}(\gb_i)$, $\gb_c:=\ga\ii C$, and $n_c:=\te{card}(\gb_c)$. Then, $S_i^{-1}(\ga_i)$ is an optimal set of $n_i$-means, and $\gb_c$ is an optimal set of $n_c$-means for $\gn$. Moreover,
$V_n(P)=\frac 1{75}(V_{n_1}(P)+V_{n_2}(P))+\frac 13 V_{n_c}(\gn).$
\end{lemma1}

\begin{proof} Proceeding in the similar way as \cite[Lemma~3.6]{CR}, we can show that $S_i^{-1}(\gb_i)$ is an optimal set of $n_i$-mean for $i=1, 2$. Proof of $\gb_c$ is an optimal set of $n_c$-means for $\gn$ is also similar with a minor modification. $\ga\ii (\frac 15, \frac 25)=\es=\ga\ii (\frac 35, \frac 45)$ implies $n=n_1+n_2+n_c$ and $\ga= \gb_1\uu \gb_2\uu\gb_c$. For $i=1, 2$, we have
\[\int_{J_i}\min_{a\in \gb_i}(x-a)^2 dP=\frac 13\int_{J_i}\min_{a\in \gb_i}(x-a)^2 d(P\circ S_i^{-1})=\frac1 {75} \int\min_{a\in S_i^{-1}(\gb_i)}(x-a)^2 dP=\frac 1{75} V_{n_i}(P).\]
Moreover, \[V_{n_c}(\gn)=\int_{\gb_c}\min_{a\in \gb_c}(x-a)^2 d\gn.\] Thus, we have
\[V_n(P)=\sum_{i=i}^2\int_{J_i}\min_{a\in \gb_i}(x-a)^2 dP+\int_{\gb_c}\min_{a\in \gb_c}(x-a)^2 dP=\frac 1{75}(V_{n_1}(P)+V_{n_2}(P))+\frac 13 V_{n_c}(\gn).\] This completes the proof of the lemma.
\end{proof}

\begin{remark1}
By Remark~\ref{remark1001} and Lemma~\ref{lem11}, we see that if $\ga_{F(n)}$ is an optimal set of $F(n)$-means for any $n\geq 1$, then $\ga_{F(n)}$ contains $F(n-1)$ elements from each of $J_1$ and $J_2$, and $2^{a(n)}$ elements from $C$ yielding the fact that
$V_{F(n)}(P)=  \frac 2 {75}  V_{F(n-1)}(P)+\frac 1 3 V_{2^{a(n)}}(\gn)$.
\end{remark1}

\begin{propo} \label{prop721}
 For any $n\geq 0$ the set $\ga_{F(n)}$ is an optimal set of $F(n)$-means for the condensation measure $P$ with quantization error given by
 \[V_{F(n)}:=V_{F(n)}(P)=\left\{\begin{array}{ll}
 \frac{89}{21900} & \te{ if } n=0, \\[0.02 in]
 \frac{2537}{6570000} & \te{ if } n=1,\\
\frac{1}{129}\frac 1{16^n} -\frac{3473}{941700} (\frac 2{75})^n & \te{ if } n\geq 2.
 \end{array}\right.\]
\end{propo}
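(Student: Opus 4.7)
The plan is to proceed by induction on $n$, closely paralleling the strategy used in Proposition~\ref{prop0004} for the discrete case. The base case $n=0$ is immediate from Proposition~\ref{prop51}: the set $\ga_{F(0)}=\{S_1(\tfrac12),\tfrac12,S_2(\tfrac12)\}$ is an optimal set of $3$-means with $V_3=\tfrac{89}{21900}$. For $n=1$, the $8$-point set $\ga_{F(1)}=S_1(\ga_{F(0)})\cup\ga_2(\gn)\cup S_2(\ga_{F(0)})$ achieves distortion $\tfrac{2537}{6570000}$ by direct computation using Lemma~\ref{lemma012} and Lemma~\ref{lemma0021}; to establish optimality among all $8$-point sets, I would combine Proposition~\ref{prop21} (no optimal point lies in the gaps $(\tfrac15,\tfrac25)\cup(\tfrac35,\tfrac45)$) with Lemma~\ref{lem11} and a short case check over the admissible triples $(n_1,n_c,n_2)$ summing to $8$.

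For the inductive step, assume $\ga_{F(n)}$ is optimal. By Proposition~\ref{prop21} and Lemma~\ref{lem11}, every optimal set of $F(n+1)$-means decomposes as $S_1(B_1)\cup B_c\cup S_2(B_2)$, where $B_1,B_2$ are optimal for $P$ with $n_1,n_2$ points respectively and $B_c$ is optimal for $\gn$ with $n_c$ points, with $n_1+n_c+n_2=F(n+1)$; by the symmetry of $P$ under $x\mapsto 1-x$ we may take $n_1=n_2$. To pin down the optimal split $(n_1,n_c)=(F(n),2^{a(n+1)})$, I would invoke the iterative replacement mechanism: starting from $\ga_{F(n)}$ and consulting the total order $\succ$ on $SF^{\ast}(n)$ established in Proposition~\ref{prop711}, repeatedly replace the $\succ$-largest block $A$ by its refinement $A^{(2)}$ (and eventually $A^{(2)(2)}$); each such step is optimality-preserving by Lemma~\ref{lemma721}. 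Lemma~\ref{lemma431} identifies the resulting configuration, after the sequence of replacements needed to pass from $SF(n)$ to $SF(n+1)$, as precisely $\ga_{F(n+1)}$.

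With optimality in hand, the decomposition in Lemma~\ref{lem11} yields the recurrence
\[
V_{F(n+1)} \;=\; \frac{2}{75}\,V_{F(n)} + \frac{1}{3}\,V_{2^{a(n+1)}}(\gn) \;=\; \frac{2}{75}\,V_{F(n)} + \frac{1}{225\cdot 16^{n+1}},
\]
using $V_{2^{a(n+1)}}(\gn)=W/4^{a(n+1)}=1/(75\cdot 16^{n+1})$ from Proposition~\ref{prop011}. Solving this linear first-order recurrence with the ansatz $V_{F(n)}=D\cdot 16^{-n}+C\bigl(\tfrac{2}{75}\bigr)^n$ forces $D-\tfrac{32}{75}D=\tfrac{1}{225}$, hence $D=\tfrac{1}{129}$; matching against $V_{F(1)}=\tfrac{2537}{6570000}$ then fixes $C=-\tfrac{3473}{941700}$, which produces the stated closed form (and in fact it can be checked directly that the same formula reproduces the $n=0,1$ values, so the case split in the statement is only for cosmetic emphasis).

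The main obstacle I anticipate is the combinatorial bookkeeping underlying the iterative replacement step: one must verify the analogue of Lemma~\ref{lemma99} in the present uniform setting, i.e.\ establish the precise structure of $\succ$ on $SF^{\ast}(n)$ by explicitly comparing the errors $\tfrac{1}{3}\cdot 75^{-(n-\ell)}\cdot W/4^{a(\ell)}$ across the various families $S(\ell)$, $S^{(2)}(\ell)$, $S^{(2)(2)}(\ell)$, $S(n+1)$, $S^{(2)}(n+1)$. Only after this ordering is pinned down can one be sure that the replacement sequence dictated by Proposition~\ref{prop711} and Lemma~\ref{lemma431} terminates exactly at $\ga_{F(n+1)}$, accounting for the cardinality bookkeeping $F(n+1)=2F(n)+2^{a(n+1)}$.
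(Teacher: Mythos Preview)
Your proposal is correct and follows essentially the same route as the paper: base cases handled directly (the paper additionally checks $n=2$ by the same kind of cardinality split you sketch for $n=1$), then the inductive replacement argument driven by the order $\succ$ on $SF^\ast(n)$, with Lemma~\ref{lemma431} identifying the terminal configuration as $\ga_{F(n+1)}$. Your concern in the final paragraph is already discharged by Lemma~\ref{lemma192} and Proposition~\ref{prop711}, which you cite; the only cosmetic difference is that the paper derives the closed form by summing the geometric series directly rather than solving the recurrence, but the two computations are equivalent.
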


\begin{proof}
By Proposition~\ref{prop51}, we know that $\ga_{F(0)}$ is an optimal set of $F(0)$-means with quantization error  $\frac{89}{21900}$. Using Lemma~\ref{lemma012} and the expression~\eqref{eq2}, the distortion error due to the set $\ga_{F(1)}$ is obtained as
\begin{align*}\label{eq67}
&\int\min_{a\in \ga_{F(1)}}(x-a)^2 dP=\frac 13 V_{2^{2a(1)}}(\gn)+\sum_{\go\in I} \int_{C_\go}\min_{a\in S_\go(\ga_{2^{a(0)}}(\gn))}(x-a)^2 dP+\sum_{\go\in I^2}\int_{J_\go}(x-a)^2 dP \notag \\
&=\frac 1 3 \frac W{2^2}+\frac 13 \frac 2{75} \frac{W}{2^{2a(0)}} +(\frac 2{75})^2 V=\frac{2537}{6570000}=0.000386149.
\end{align*}
First, we show that $\ga_{F(1)}$ is an optimal set of $F(1)$-means. Let $\gb$ be an optimal set of $F(1)$-means and $V_{F(1)}$ is the corresponding quantization error. Then, from the above calculation, we have  $V_{F(1)}\leq 0.000386149$. Recall that $\gb$ contains points from $J_1, J_2$ and $C$, and $\gb$ does not contain any point from the open intervals $(\frac 15, \frac 25)$ and $(\frac 25, \frac 35)$. Due to symmetry of $P$ we can assume that $\gb$ contains equal number of elements from each of $J_1$ and $J_2$, and the rest of the elements of $\gb$ are equally spaced over the set $C$ with respect to the uniform distribution $\gn$. Thus, $\te{card}(\gb\ii J_1)=\te{card}(\gb\ii J_2)\leq 3$. Suppose that $\te{card}(\gb\ii J_1)=\te{card}(\gb\ii J_2)=1$, then
\[V_{F(1)}\geq 2 \int_{J_1}\min_{a\in \gb\ii J_1}(x-a)^2 dP= \frac 2 {75} V=0.00295282>V_{F(1)},\]
which is a contradiction. Next, suppose that $\te{card}(\gb\ii J_1)=\te{card}(\gb\ii J_2)=2$, then,
\[V_{F(1)}\geq 2 \int_{J_1}\min_{a\in \gb\ii J_1}(x-a)^2 dP= \frac 2 {75} V_2=0.000812075>V_{F(1)},\]
which leads to another contradiction. So, we can assume that  $\te{card}(\gb\ii J_1)=\te{card}(\gb\ii J_2)=3$. Then, by Lemma~\ref{lem11}, we have $S_1^{-1}(\gb\ii J_1)=S_2^{-1}(\gb \ii J_2)=\ga_{F(0)}$. Hence, by Remark~\ref{remark1001},
$\gb=S_1(\ga_{F(0)})\uu S_2(\ga_{F(0)})\uu \ga_2(\gn)=\ga_{F(1)}$, i.e., $\ga_{F(1)}$ is an optimal set of $F(1)$-means, and the corresponding quantization error is given by $V_{F(1)}=\frac{2537}{6570000}$. The distortion error due to the set $\ga_{F(n)}$ for any $n\geq 2$ is given by
\begin{align*}
&\int\min_{a\in \ga_{F(n)}}(x-a)^2 dP\\
&=\frac 13 V_{2^{2a(n)}}(\gn)+\sum_{k=0}^{n-1}\sum_{\go\in I^{n-k}} \int_{C_\go}\min_{a\in S_\go(\ga_{2^{a(k)}}(\gn))}(x-a)^2 dP+\sum_{\go\in I^{n+1}}\int_{J_\go}(x-a)^2 dP\\
&=\frac 13 \frac W{2^{2a(n)}}+\sum_{k=1}^{n-1}\sum_{\go\in I^{n-k}} \frac 13 \frac 1{75^{n-k}} \frac W{2^{2 a(k)}} + \sum_{\go \in I^n}\frac 13 \frac 1{75^{n}} \frac W{2^{2 a(0)}} +(\frac 2{75})^{n+1} V\\
&=\frac 13 \frac {4W}{16^n}+\sum_{k=1}^{n-1}\frac 13 (\frac 2{75})^{n-k} \frac {4W}{16^k} + \frac 13 (\frac 2{75})^{n}W +(\frac 2{75})^{n+1} V\\
&=\frac 13 \frac {4W}{16^n}+(\frac 2{75})^n \frac {4W} 3 \sum_{k=1}^{n-1} (\frac{75}{32})^k + \frac 13 (\frac 2{75})^{n}W +(\frac 2{75})^{n+1} V\\
&=\frac 13 \frac {4W}{16^n}+(\frac 2{75})^n\Big(\frac W 3-\frac {4W}{3} \frac {75}{43}\Big(1-(\frac {75}{32})^{n-1}\Big)+\frac 2{75}V\Big),
\overline{}\end{align*}
yielding
\begin{equation} \label{eq68} \int\min_{a\in \ga_{F(n)}}(x-a)^2 dP=\frac{1}{129}\frac 1{16^n} -\frac{3473}{941700} (\frac 2{75})^n.\end{equation}
Now, we show that $\ga_{F(2)}$ is an optimal set of $F(2)$-means. Let $\gg$ be an optimal set of $F(2)$-means, and $V_{F(2)}$ is the corresponding quantization error. By \eqref{eq68}, we have $V_{F(2)}\leq 0.0000276584$. We show that $\te{card}(\gg\ii J_1)=\te{card}(\gg\ii J_2)=8$. Due to symmetry of $P$ we can assume that $\gg$ contains equal number of elements from each of $J_1$ and $J_2$, and the rest of the elements of $\gb$ are equally spaced over the set $C$ with respect to the uniform distribution $\gn$. Suppose that $\te{card}(\gg\ii J_1)=\te{card}(\gg\ii J_2)\leq 6$. Then, if $\ga_6$ is an optimal set of six-means, by Remark~\ref{remark99}, we have
\[V_{F(2)}\geq 2 \int_{J_1}\min_{a\in \gg\ii J_1}(x-a)^2 dP\geq  2\int_{J_1} \min_{a \in S_1(\ga_6)}(x-a)^2 dP= \frac 2 {75} \frac{21481}{19710000}\]
i.e., $V_{F(2)}\geq 0.0000290627>V_{F(2)}$, which is a contradiction. Suppose that $\te{card}(\gg\ii J_1)=\te{card}(\gg\ii J_2)=7$. Then, by Remark~\ref{remark101}, we have
\[V_{F(2)}\geq 2 \int_{J_1}\min_{a\in \gg\ii J_1}(x-a)^2 dP+\frac 13 V_{10}(\gn) =\frac 2{75} \frac{7273}{9855000}+\frac 13 \frac W{10^2}=0.0000307911>V_{F(2)},\]
which leads a contradiction. Similarly, we can show that if $\te{card}(\gg\ii J_1)=\te{card}(\gg\ii J_2)\geq 9$, a contradiction arises, i.e., $\te{card}(\gg\ii J_1)=\te{card}(\gg\ii J_2)=8$, and so by Remark~\ref{remark1001},
$\gg=S_1(\ga_{F(1)})\uu S_2(\ga_{F(1)})\uu \ga_{2^{a(2)}}(\gn)=\ga_{F(2)}$, i.e., $\ga_{F(2)}$ is an optimal set of $F(2)$-means, and the corresponding quantization error is given by $V_{F(2)}=0.0000276584$.

Let $\ga_{F(n)}$ be an optimal set of $F(n)$-means for some $n\geq 2$. We show that $\ga_{F(n+1)}$ is an optimal set of $F(n+1)$-means. We have
$\ga_{F(n)}=\uu_{A\in SF(n)}A$. Recall that, by Proposition~\ref{prop21}, an optimal set of $n$-means for any $n\geq 3$ does not contain any point from the open intervals $(\frac 15, \frac 25)$ and $(\frac 35, \frac 45)$. In the first step, let $A(1) \in SF(n)$ be such that $A(1)\succ B$ for any other $B\in SF(n)$. Then, by Lemma~\ref{lemma711},
the set  $(\ga_{F(n)}\setminus A(1)) \uu A^{(2)}(1)$ gives an optimal set of $F(n)-\te{card}(A(1))+\te{card}(A^{(2)}(1))$-means. In the 2nd step, let $A(2) \in (SF(n)\setminus \set{A(1)})\uu \set{A^{(2)}(1)}$ be such that $A(2)\succ B$ for any other set $B\in  (SF(n)\setminus \set{A(1)})\uu \set{A^{(2)}(1)}$.
Then, using the similar technique as Lemma~\ref{lemma711}, we can show that the distortion error due to the following set:
\begin{equation} \label{eq44} \Big(((\ga_{F(n)}\setminus A(1)) \uu A^{(2)}(1))\setminus A(2)\Big)\uu A^{(2)}(2)
\end{equation}
 with cardinality $F(n)-\te{card}(A(1))+\te{card}(A^{(2)}(1))-\te{card}(A(2))+\te{card}(A^{(2)}(2))$ is smaller than the distortion error due to the set obtained by replacing $A(2)$ in the set \eqref{eq44} by the set $B$. In other words, $\Big(((\ga_{F(n)}\setminus A(1)) \uu A^{(2)}(1))\setminus A(2)\Big)\uu A^{(2)}(2)$ forms an optimal set of
$F(n)-\te{card}(A(1))+\te{card}(A^{(2)}(1))-\te{card}(A(2))+\te{card}(A^{(2)}(2))$-means. Proceeding inductively in this way, up to $(2n+3)$ steps, we can see that $\ga_{F(n+1)}=\Big(\mathop{\uu}\limits_{A\in (SF(n)\setminus S(0))}A^{(2)(2)}\Big)\uu S^{(2)}(0)$ forms an optimal set of $F(n+1)$-means.
Thus, by the induction principle, we can say that for any $n\geq 0$, the set $\ga_{F(n)}$ forms an optimal set of $F(n)$-means with quantization error $V_{F(n)}$ as given in the hypothesis. Thus, the proof of the proposition is complete.
\end{proof}

\subsection{Asymptotics for the $n$th quantization error $V_n(P)$}
In this subsection, we show that the quantization dimension of the condensation measure $P$ exists and equals to the quantization dimension of the uniform distribution $\gn$. In addition, we show that the $D(P)$-dimensional quantization coefficient for the condensation measure $P$ does not exist, and the lower and upper  quantization coefficients for $P$ are finite and positive.

\begin{theo} \label{Th11}
Let $P$ be the condensation measure associated with the uniform distribution $\gn$. Then, $\lim_{n\to \infty} \frac{2 \log n}{\log V_n(P)}=1$, i.e., the quantization dimension $D(P)$ of the measure $P$ exists and equals to the quantization dimension $D(\gn)$ of $\gn$.
\end{theo}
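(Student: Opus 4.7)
The plan is to mirror the argument used in the proof of Theorem~\ref{Th2}, adapting it to the explicit formula from Proposition~\ref{prop721}. For each $n \in \mathbb{N}$, I would define $\ell(n)$ to be the unique nonnegative integer such that $F(\ell(n)) \leq n < F(\ell(n)+1)$, where $F(n) = 4^n + 2^{n+1}$. By the monotonicity of the quantization error, $V_{F(\ell(n)+1)}(P) < V_n(P) \leq V_{F(\ell(n))}(P)$, which yields the sandwich
\[
\frac{2\log F(\ell(n))}{-\log V_{F(\ell(n)+1)}(P)} \leq \frac{2\log n}{-\log V_n(P)} \leq \frac{2\log F(\ell(n)+1)}{-\log V_{F(\ell(n))}(P)}.
\]
Since $n \to \infty$ forces $\ell(n) \to \infty$, it suffices to show that both outer ratios tend to $1$.

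Next, I would extract the asymptotics of the two building blocks. From $F(n) = 4^n + 2^{n+1}$, we have $\log F(n) = n\log 4 + \log(1 + 2^{1-n}) \sim 2n\log 2$. From Proposition~\ref{prop721}, for $n \geq 2$,
\[
V_{F(n)}(P) = \frac{1}{129}\cdot\frac{1}{16^n} - \frac{3473}{941700}\Big(\frac{2}{75}\Big)^n.
\]
Since $\frac{1}{16} > \frac{2}{75}$, the first term dominates, so $V_{F(n)}(P) \sim \frac{1}{129\cdot 16^n}$, and hence $-\log V_{F(n)}(P) \sim n\log 16 = 4n\log 2$. Therefore
\[
\lim_{n\to\infty}\frac{2\log F(n)}{-\log V_{F(n)}(P)} = \frac{2\cdot 2\log 2}{4\log 2} = 1.
\]
A trivial reindexing ($F(\ell(n)+1)$ differs from $F(\ell(n))$ only by a bounded multiplicative factor in the logarithms, and shifting the index does not affect the limit) shows that both outer expressions in the sandwich have the same limit $1$, which gives $D(P) = 1$.

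Finally, I would observe that by Proposition~\ref{prop011} we have $D(\gn) = 1$, so $D(P) = D(\gn)$ as claimed. This step is essentially free once the existence of $D(P)$ has been established.

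There is no real obstacle here: the proof is a direct asymptotic computation. The only minor subtlety is verifying that the term $\frac{1}{129}\cdot 16^{-n}$ in Proposition~\ref{prop721} genuinely dominates $\frac{3473}{941700}\cdot(2/75)^n$, so that $-\log V_{F(n)}(P)$ has the right leading order; this reduces to the elementary inequality $\frac{1}{16} > \frac{2}{75}$. Everything else is the same template used for Theorem~\ref{Th2}.
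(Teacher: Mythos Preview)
Your proposal is correct and follows essentially the same approach as the paper: both set up the sandwich $F(\ell(n))\le n<F(\ell(n)+1)$, use monotonicity of $V_n$ to bound $\frac{2\log n}{-\log V_n}$, and then compute the limits of the outer ratios from the explicit formula for $V_{F(n)}$ in Proposition~\ref{prop721}. The only cosmetic difference is that the paper evaluates the limiting ratio via an $\infty/\infty$ L'H\^opital-type step, whereas you read off the dominant terms $F(n)\sim 4^n$ and $V_{F(n)}\sim \frac{1}{129}\cdot 16^{-n}$ directly; both lead to $\frac{2\log 4}{\log 16}=1$.
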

\begin{proof}
For $n\in \D N$, $n\geq 4$,  let $\ell(n)$ be the least positive integer such that $F(\ell(n))\leq n<F(\ell(n)+1)$. Then,
$V_{F(\ell(n)+1)}<V_n\leq V_{F(\ell(n))}$. Thus, we have
\begin{align*}
\frac {2\log\left(F(\ell(n))\right)}{-\log\left(V_{F(\ell(n)+1)}\right)}< \frac {2\log n}{-\log V_n}< \frac {2\log\left(F(\ell(n)+1)\right)}{-\log\left(V_{F(\ell(n))}\right)}.
\end{align*}
Notice that when $n\to \infty$, then $\ell(n)\to \infty$. By Proposition~\ref{prop721}, we have
\begin{align*}
&\lim_{\ell(n)\to\infty} \frac {2\log\left(F(\ell(n))\right)}{-\log\left(V_{F(\ell(n)+1)}\right)}=2 \lim_{\ell(n)\to\infty} \frac {\log(4^{\ell(n)}+2^{\ell(n)+1})}{-\log\Big(\frac{1}{129}\frac 1{16^{\ell(n)+1}} -\frac{3473}{941700} (\frac 2{75})^{\ell(n)+1}\Big)}(\frac{\infty} {\infty} \te{ form})\\
&=2 \lim_{\ell(n)\to\infty} \Big(\frac{\frac{1}{129}\frac 1{16^{\ell(n)+1}} -\frac{3473}{941700} (\frac 2{75})^{\ell(n)+1}}{4^{\ell(n)}+2^{\ell(n)+1}}\Big)\Big(\frac {4^{\ell(n)} \log 4+2^{\ell(n)+1}\log 2}{\frac 1{129}\frac 1{16^{\ell(n)+1}} \log 16 +\frac{3473}{941700} (\frac 2{75})^{\ell(n)+1}\log \frac 2{75}}\Big)\\
&=\frac {2\log 4}{\log 16}=1.
\end{align*}
Similarly, $\mathop{\lim}\limits_{\ell(n)\to\infty} \frac {2\log\left(F(\ell(n)+1)\right)}{-\log\left(V_{F(\ell(n))}\right)}=1$. Thus, $1\leq \liminf_n \frac{2\log n}{-\log V_n}\leq \limsup_n \frac{2\log n}{-\log V_n}\leq1$ implying the fact that the quantization dimension of the measure $P$ exists and equals to the quantization dimension $D(\gn)$ of $\gn$, which is the theorem.
\end{proof}

\begin{theo} \label{Th21}
The $D(P)$-dimensional quantization coefficient for the condensation measure $P$ does not exist.
\end{theo}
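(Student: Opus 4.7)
The plan is to exhibit two subsequences of $(n^2 V_n(P))_{n\in\D N}$ converging to distinct positive limits; since Theorem~\ref{Th11} gives $D(P)=1$, the $D(P)$-dimensional quantization coefficient is $\mathop{\lim}\limits_n n^2 V_n(P)$, and divergence of this sequence proves the theorem.

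First I would evaluate the subsequence $n=F(k)$. With $F(k)=4^k+2^{k+1}$ one has $F(k)^2/16^k\to 1$, and from Proposition~\ref{prop721} (for $k\geq 2$),
\[
V_{F(k)}(P)=\frac{1}{129\cdot 16^k}-\frac{3473}{941700}\Big(\frac{2}{75}\Big)^k.
\]
Since $16\cdot\frac{2}{75}=\frac{32}{75}<1$, multiplying by $16^k$ kills the second term in the limit, yielding $\mathop{\lim}\limits_{k\to\infty}F(k)^2 V_{F(k)}(P)=\frac{1}{129}$.

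Next I would manufacture a second subsequence by performing one $\succ$-driven replacement inside $\ga_{F(k)}$. By Lemma~\ref{lemma192}(i),(iv), $S(k)$ is the $\succ$-maximal element of $SF(k)$ for all $k\geq 2$. The first step of the inductive construction underlying Proposition~\ref{prop721} (invoking Lemma~\ref{lemma711} together with Proposition~\ref{prop21}) shows that
\[
\gb_k:=\bigl(\ga_{F(k)}\setminus S(k)\bigr)\uu S^{(2)}(k)
\]
is an optimal set of cardinality $p_k:=F(k)+\bigl(\te{card}\,S^{(2)}(k)-\te{card}\,S(k)\bigr)=F(k)+2^{a(k)}=\frac{3}{2}\cdot 4^k+2^{k+1}$. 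By Proposition~\ref{prop011}, the contribution of $S(k)$ to $V_{F(k)}(P)$ is $\frac{1}{3}V_{2^{a(k)}}(\gn)=\frac{W}{3\cdot 4^{a(k)}}$, and that of $S^{(2)}(k)$ is $\frac{W}{3\cdot 4^{a(k)+1}}$. With $a(k)=2k-1$ and $W=\frac{1}{300}$, a brief calculation gives
\[
V_{p_k}(P)=V_{F(k)}(P)-\frac{W}{4^{a(k)+1}}=V_{F(k)}(P)-\frac{1}{300\cdot 16^k}.
\]
Combined with $p_k^2/16^k\to 9/4$, this yields
\[
\mathop{\lim}\limits_{k\to\infty}p_k^2 V_{p_k}(P)=\frac{9}{4}\Big(\frac{1}{129}-\frac{1}{300}\Big)=\frac{171}{17200}.
\]

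A quick common-denominator check shows $\frac{1}{129}=\frac{400}{51600}$ while $\frac{171}{17200}=\frac{513}{51600}$; hence $(F(k)^2 V_{F(k)}(P))$ and $(p_k^2 V_{p_k}(P))$ are two subsequences of $(n^2 V_n(P))_{n\in\D N}$ with different limits, so the latter does not converge and the $D(P)$-dimensional quantization coefficient for $P$ fails to exist. The one delicate point in this plan is the optimality assertion for $\gb_k$: Proposition~\ref{prop721} as stated only records optimality at cardinalities $F(k)$, so I would need to extract from its proof that each intermediate set in the $\succ$-ordered replacement scheme is itself optimal. This follows from Lemma~\ref{lemma711} together with the ``no points in the gaps $(\frac{1}{5},\frac{2}{5})\uu(\frac{3}{5},\frac{4}{5})$'' restriction of Proposition~\ref{prop21}, and is the one step demanding careful attention.
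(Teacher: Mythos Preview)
Your proposal is correct and follows essentially the same route as the paper: both arguments compute $\lim_k F(k)^2 V_{F(k)}(P)=\tfrac{1}{129}$ and $\lim_k (F(k)+2^{a(k)})^2 V_{F(k)+2^{a(k)}}(P)=\tfrac{171}{17200}$ via the single replacement $S(k)\mapsto S^{(2)}(k)$, then conclude that $(n^2 V_n(P))$ diverges. The delicate point you flag---optimality of the intermediate set $\gb_k$---is handled in the paper exactly as you suggest, by invoking Proposition~\ref{prop711} and Lemma~\ref{lemma711} (the first step of the inductive scheme in the proof of Proposition~\ref{prop721}).
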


\begin{proof} For any $n\in\D N$, by Proposition~\ref{prop711} and Lemma~\ref{lemma711}, we see that $(\ga_{F(n)}\setminus S(n))\uu S^{(2)}(n)$ is an optimal set of $F(n)+2^{a(n)}$-means with quantization error
\begin{align*}
&V_{F(n)+2^{a(n)}}=V_{F(n)}-\frac 13 \frac W{2^{2a(n)}}+\frac 13 \frac W{2^{2(a(n)+1)}}=V_{F(n)}-\frac W{16^n}=(\frac{1}{129}-W)\frac 1{16^n} -\frac{3473}{941700} (\frac 2{75})^n\\
&=\frac{19}{4300}\frac 1{16^n} -\frac{3473}{941700} (\frac 2{75})^n.
\end{align*}
Since $F(n)+2^{a(n)}=4^n+2^{n+1}+2^{2n-1}=\frac 32 4^n+2^{n+1}$,  we have
\begin{equation} \label{eq55} \lim_{n\to \infty} (F(n)+2^{a(n)})^2 V_{F(n)+2^{a(n)}}=\lim_{n\to \infty} (\frac 32 4^n+2^{n+1})^2 (\frac{19}{4300}\frac 1{16^n} -\frac{3473}{941700} (\frac 2{75})^n)=\frac{171}{17200}.\end{equation}
Again, $(F(n))^2=(4^{n}+2^{n+1})^2=16^{n}+2^{3n+2}+4^{n+1}=16^{n} (1+\frac 4{2^{n}}+\frac{4}{4^{n}})$,
and \[V_{F(n)}(P)=\frac{1}{129}\frac 1{16^{n}} -\frac{3473}{941700} (\frac 2{75})^{n} =\frac 1{16^{n}}\Big(\frac{1}{129}-\frac{3473}{941700}(\frac {32}{75})^{n}\Big).\]
Thus,
\begin{equation} \label{eq56}
\lim_{n\to \infty} (F(n))^2V_{F(n)}=\frac {1}{129}.
\end{equation}
By the equations \eqref{eq55} and \eqref{eq56}, we see that $\liminf_n n^2 V_n(P)\leq \frac {1}{129}<\frac{171}{17200}\leq \limsup_n n^2 V_n(P)$, in other words, $\lim_{n\to\infty} n^2 V_n(P)$ does not exist, i.e., the $D(P)$-dimensional quantization coefficient for the condensation measure $P$ does not exist.
\end{proof}

\begin{theo} \label{Th31}
The $D(P)$-dimensional lower and upper quantization coefficients for the condensation measure $P$ are finite and positive.
\end{theo}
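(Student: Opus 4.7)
The plan is to mimic the structure of the proof of Theorem~\ref{Th3}, sandwiching the subsequence along the known values $F(n)$ against the full sequence. Since $D(P)=1$ by Theorem~\ref{Th11}, we must show that both $\liminf_{n}n^{2}V_{n}(P)$ and $\limsup_{n}n^{2}V_{n}(P)$ are strictly between $0$ and $\infty$. The key quantitative inputs are Proposition~\ref{prop721}, which gives the closed form
\[
V_{F(n)}=\frac{1}{129}\,\frac{1}{16^{n}}-\frac{3473}{941700}\Big(\frac{2}{75}\Big)^{n}\qquad (n\geq 2),
\]
together with the explicit formula $F(n)=4^{n}+2^{n+1}$ from Definition~\ref{defi31}. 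From these two facts one already extracts
\[
\lim_{n\to\infty}(F(n))^{2}\,V_{F(n)}=\frac{1}{129},\qquad \lim_{n\to\infty}\frac{F(n+1)}{F(n)}=4,
\]
the first being the computation already carried out in Theorem~\ref{Th21}.

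The first step is to fix $n\in\mathbb N$ and take $\ell(n)$ to be the least positive integer with $F(\ell(n))\leq n<F(\ell(n)+1)$. Since $V_{n}(P)$ is nonincreasing in $n$, one obtains the sandwich $V_{F(\ell(n)+1)}<V_{n}\leq V_{F(\ell(n))}$, and multiplying through by appropriate powers gives
\[
\big(F(\ell(n))\big)^{2}V_{F(\ell(n)+1)}\;<\;n^{2}V_{n}(P)\;<\;\big(F(\ell(n)+1)\big)^{2}V_{F(\ell(n))}.
\]
As $n\to\infty$ one has $\ell(n)\to\infty$, and so the next step is to evaluate the two flanking subsequences.

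For the right-hand bound, I would write
\[
(F(\ell(n)+1))^{2}V_{F(\ell(n))}=\frac{(F(\ell(n)+1))^{2}}{(F(\ell(n)))^{2}}\cdot (F(\ell(n)))^{2}V_{F(\ell(n))},
\]
whose limit equals $16\cdot\tfrac{1}{129}=\tfrac{16}{129}$ by the two displayed limits above. Symmetrically,
\[
(F(\ell(n)))^{2}V_{F(\ell(n)+1)}=\frac{(F(\ell(n)))^{2}}{(F(\ell(n)+1))^{2}}\cdot(F(\ell(n)+1))^{2}V_{F(\ell(n)+1)}\longrightarrow \frac{1}{16}\cdot\frac{1}{129}=\frac{1}{2064}.
\]
Combining these with the sandwich yields
\[
\frac{1}{2064}\leq \liminf_{n\to\infty}n^{2}V_{n}(P)\leq\limsup_{n\to\infty}n^{2}V_{n}(P)\leq \frac{16}{129},
\]
which establishes that both the $D(P)$-dimensional lower and upper quantization coefficients are finite and strictly positive, completing the proof.

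There is no serious obstacle: the work is essentially a telescoping of the two explicit formulas already in hand. The only point that needs to be checked carefully is the ratio $F(n+1)/F(n)\to 4$, which follows immediately from $F(n)=4^{n}+2^{n+1}$, and the elementary observation that $V_{n}(P)$ is monotone nonincreasing in $n$ (which is automatic for quantization errors since an optimal set of $n$-means can be enlarged to a set of $n+1$ points without increasing the error).
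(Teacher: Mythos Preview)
Your proof is correct and follows essentially the same approach as the paper: sandwiching $n^{2}V_{n}(P)$ between $(F(\ell(n)))^{2}V_{F(\ell(n)+1)}$ and $(F(\ell(n)+1))^{2}V_{F(\ell(n))}$, then evaluating the two flanking limits via the closed form for $V_{F(n)}$ and the ratio $F(n+1)/F(n)\to 4$ to obtain the bounds $\tfrac{1}{16}\cdot\tfrac{1}{129}$ and $\tfrac{16}{129}$. The only cosmetic difference is that the paper writes out the factorization of $(F(\ell(n)))^{2}$ and $V_{F(\ell(n))}$ explicitly before passing to the limit, whereas you invoke the limit $\lim_{n}(F(n))^{2}V_{F(n)}=\tfrac{1}{129}$ already computed in Theorem~\ref{Th21}.
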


\begin{proof}
For $n\in \D N$, $n\geq 4$,  let $\ell(n)$ be the least positive integer such that $F(\ell(n))\leq n<F(\ell(n)+1)$. Then,
$V_{F(\ell(n)+1)}<V_n\leq V_{F(\ell(n))}$ implying $(F(\ell(n)))^2V_{F(\ell(n)+1)}<n^2V_n< (F(\ell(n)+1))^2V_{F(\ell(n))}$. As $\ell(n)\to \infty$ whenever $n\to \infty$, we have
\[\lim_{n\to \infty} \frac{(F(\ell(n)))^2}{(F(\ell(n)+1))^2}=\lim_{n\to \infty} \Big(\frac{4^{\ell(n)}+2^{\ell(n)+1}}{4^{\ell(n)+1}+2^{\ell(n)+2}}\Big)^2=\frac 1{16}.\]
Again,  $(F(\ell(n)))^2=(4^{\ell(n)}+2^{\ell(n)+1})^2=16^{\ell(n)}+2^{3\ell(n)+2}+4^{\ell(n)+1}=16^{\ell(n)} (1+\frac 4{2^{\ell(n)}}+\frac{4}{4^{\ell(n)}})$,
and \[V_{F(\ell(n))}(P)=\frac{1}{129}\frac 1{16^{\ell(n)}} -\frac{3473}{941700} (\frac 2{75})^{\ell(n)} =\frac 1{16^{\ell(n)}}\Big(\frac{1}{129}-\frac{3473}{941700}(\frac {32}{75})^{\ell(n)}\Big).\]
Thus,
\[
 \lim_{n\to \infty} (F(\ell(n)))^2V_{F(\ell(n)+1)}=\frac 1{16} \lim_{n\to \infty}(F(\ell(n)+1))^2V_{F(\ell(n)+1)}=\frac 1 {16}\frac 1{129}, \te{ and similarly}\]
\[\lim_{n\to \infty} (F(\ell(n)+1))^2V_{F(\ell(n))}=\frac {16}{129},\]
yielding the fact that $\frac 1 {16}\frac 1{129}\leq \mathop{\liminf}\limits_{n\to\infty} n^2 V_n(P)\leq \mathop{\limsup}\limits_{n\to \infty} n^2 V_n(P)\leq \frac {16}{129}$, i.e., the $D(P)$-dimensional lower and quantization coefficients for the condensation measure $P$ are finite  and positive, which is the theorem.
\end{proof}
\begin{remark1} Notice that $(\frac 13 (\frac 15)^2)^{\frac {\gk}{2+\gk}}+(\frac 13(\frac 15)^2)^{\frac {\gk}{2+\gk}}=1$ implies $\gk=\frac{2\log 2}{\log75-\log 2}\approx 0.382496<1=D(\gn) =\max\set {\gk, D(\gn)}$. In Theorem~\ref{Th11}, we have proved that $D(P)=D(\gn).$
\end{remark1}
We conclude the paper with the following remark.
\begin{remark}
	Using the similar techniques or by giving a major overhaul to our techniques given in the last two sections Section \ref{sec1} and Section \ref{sec2}, one can investigate optimal quantization for more general condensation measures.
\end{remark}
	\section*{Statements and Declarations}
\textbf{Data availability:} Data sharing is not applicable to this article as no data sets were generated or analyzed during the current study.\\
\textbf{Funding:} The first author thanks IIIT Allahabad (Ministry of Education, India) for financial support through a Senior Research Fellowship. \\
\textbf{Conflict of interest:} %%%%%%%%%%%%%%%%%%%%Author Contributions Funding Not applicable.
We declare that we do not have any conflict of interest.\\
\textbf{Author Contributions:} %%%%%%%%%%%%%%%%%%%%Author Contributions
All authors contributed equally to this manuscript.

\end{document}